\definecolor{darkblue}{rgb}{0.03,0.03,0.23}
\newtheorem{teo}{Theorem}
\newtheorem{lem}{Lemma}[section]
\newtheorem{teosec}[lem]{Theorem}
\newtheorem{prop}[lem]{Proposition}
\newtheorem{cor}[lem]{Corollary}
\newtheorem{teotype}{Theorem}
\theoremstyle{definition}
\newtheorem{ex}[lem]{Example}
\newtheorem{remark}[lem]{Remark}
\newtheorem{assum}{Assumption}
\newcommand{\pp}{\mathbb{P}}
\newcommand{\ee}{\mathbb{E}}
\newcommand{\rr}{\mathbb{R}}
\newcommand{\nn}{\mathbb{N}}
\newcommand{\zz}{\mathbb{Z}}
\newcommand{\cc}{\mathcal{C}}
\newcommand{\cp}{\mathcal{P}}
\newcommand{\chu}{{\mathcal{H}_1}}
\newcommand{\chup}{{\chu^\prime}}
\newcommand{\chd}{{\mathcal{H}_2}}
\newcommand{\chdp}{{\chd^{\prime}}}
\newcommand{\cht}{{\mathcal{H}_3}}
\newcommand{\chtp}{{\cht^{\prime}}}
\newcommand{\chf}{{\mathcal{H}_4}}
\newcommand{\chfp}{{\chf^{\prime}}}
\newcommand{\chii}{{\mathcal{H}_i}}
\newcommand{\chip}{{\chii^{\prime}}}
\newcommand{\ccz}{{\mathcal{C}_0}}
\newcommand{\cczp}{{\ccz^\prime}}
\newcommand{\ccd}{{\mathcal{C}_2}}
\newcommand{\ccdp}{{\ccd^\prime}}
\newcommand{\cct}{{\mathcal{C}_3}}
\newcommand{\cctp}{{\cct^\prime}}
\newcommand{\cci}{{\mathcal{C}_i}}
\newcommand{\ccip}{{\cci^\prime}}
\newcommand{\ogamma}{{\overline{\gamma}}}
\newcommand{\olambda}{{\overline{\lambda}}}
\newcommand{\cbw}{\cc_b(W)}
\newcommand{\mw}{\mathcal{B}(W)}
\newcommand{\uno}[1]{\mathbf{1}_{#1}}
\newcommand{\br}[2]{\left<{#1},{#2}\right>}
\newcommand{\dif}{(\varphi(w')-\varphi(w))}
\newcommand{\difd}{(\varphi(w_1')+\varphi(w_2')-\varphi(w_1)-\varphi(w_2))}
\newcommand{\ootimes}{\hspace{-1pt}\otimes\hspace{-1pt}}
\newcommand{\pparagraph}[1]{\vspace{-10pt}\paragraph{#1}}
\newcommand{\ep}{\varepsilon}
\newcommand{\norm}[1]{\left\|#1\right\|}
\newcounter{sys}
\renewcommand{\thesys}{S\arabic{sys}}
\numberwithin{equation}{section}
\let\oldmarginpar\marginpar
\renewcommand{\marginpar}[1]{\oldmarginpar{\raggedright\texttt{#1}}}
\begin{document}

\title{Limit Theorems for Individual-Based Models in Economics and Finance
  \footnotetext{\\[-8pt]\emph{MSC}: 60K35, 60B12, 46N30, 62P50, 91B70\\
    \emph{Keywords}: Individual-based model, interacting particle system, law of large
    numbers, central limit theorem, fluctuations process, measure-valued process, finance, economics} }

\author{Daniel Remenik\vspace{2pt}\\Cornell University}

\maketitle

\thispagestyle{empty}

\begin{abstract}
  There is a widespread recent interest in using ideas from statistical physics to model
  certain types of problems in economics and finance. The main idea is to derive the
  macroscopic behavior of the market from the random local interactions between
  agents. Our purpose is to present a general framework that encompasses a broad range of models,
  by proving a law of large numbers and a central limit theorem for certain interacting
  particle systems with very general state spaces. To do this we draw inspiration from
  some work done in mathematical ecology and mathematical physics. The first result is
  proved for the system seen as a measure-valued process, while to prove the second one we
  will need to introduce a chain of embeddings of some abstract Banach and Hilbert spaces
  of test functions and prove that the fluctuations converge to the solution of a certain
  generalized Gaussian stochastic differential equation taking values in the dual of one
  of these spaces.
\end{abstract}

\section{Introduction}\label{sec:intr}

We consider interacting particle systems of the following form. There is a fixed number
$N$ of particles, each one having a type $w\in W$. The particles change their types via
two mechanisms. The first one corresponds simply to transitions from one type to another
at some given rate. The second one involves a direct interaction between particles: pairs
of particles interact at a certain rate and acquire new types according to some given
(random) rule. We will allow these rates to depend directly on the types of the particles
involved and on the distribution of the whole population on the type space.

Our purpose is to prove limit theorems, as the number of particles $N$ goes to infinity,
for the empirical random measures $\nu^N_t$ associated to these systems. $\nu^N_t$ is
defined as follows: if $\eta^N_t(i)\in W$ denotes the type of the $i$-th particle at time
$t$, then
\[\nu^N_t=\frac{1}{N}\sum_{i=1}^N\delta_{\eta^N_t(i)},\]
where $\delta_w$ is the probability measure on $W$ assigning mass 1 to $w$.

Our first result, Theorem \ref{thm:limit}, provides a law of large numbers for $\nu^N_t$
on a finite time interval $[0,T]$: the empirical measures converge in distribution to a
deterministic continuous path $\nu_t$ in the space of probability measures on $W$, whose
evolution is described by a certain system of integro-differential equations. Theorem
\ref{thm:clt} analyzes the fluctuations of the finite system $\nu^N_t$ around $\nu_t$, and
provides an appropriate central limit result: the fluctuations are of order $1/\sqrt{N}$,
and the asymptotic behavior of the process $\sqrt{N}\big(\nu^N_t-\nu_t\big)$ has a
Gaussian nature. This second result is, as could be expected, much more delicate than the
first one.

In recent years there has been an increasing interest in the use of interacting particle
systems to model phenomena outside their original application to statistical physics, with
special attention given to models in ecology, economics, and finance. Our model is
specially suited for the last two types of problems, in particular because we have assumed
a constant number of particles, which may represent agents in the economy or financial
market (ecological problems, on the other hand, usually require including birth and death
of particles). Particle systems were first used in this context in \citet{foll}, and they
have been used recently by many authors to analyze a variety of problems in economics and
finance. The techniques that have been used are diverse, including, for
instance, ideas taken from the Ising model in \citet{foll}, the voter model in
\citet{gieWeb}, the contact process in \citet{huckKos}, the theory of large deviations in
\citet{daiPra}, and the theory of queuing networks in \citet{davisEsp} and
\citet{bayHoSir}.
 
Our original motivation for this work comes precisely from financial modeling. It is
related to some problems studied by Darrell Duffie and coauthors (see Examples
\ref{ex:otc} and \ref{ex:infPerc}) in which they derive some models from the random local
interactions between the financial agents involved, based on the ideas of \citet{dufSun}.
Our initial goal was to provide a general framework in which this type of problems could
be rigorously analyzed, and in particular prove a law of large numbers for them. In our
general setting, $W$ will be allowed to be any locally compact complete separable metric
space. Considering type spaces of this generality is one of the main features of our
model, and it allows us to provide a unified framework to deal with models of different
nature (for instance, the model in Example \ref{ex:otc} has a finite type space and the
limit solves a finite system of ordinary differential equations, while in Example
\ref{ex:infPerc} the type space is $\rr$ and the limit solves a system of uncountably many
integro-differential equations).

To achieve this first goal, we based our model and techniques on ideas taken from the
mathematical biology literature, and in particular on \citet{fourMel}, where the authors
study a model that describes a spatial ecological system where plants disperse seeds and
die at rates that depend on the local population density, and obtain a deterministic limit
similar to ours. We remark that, following their ideas, our results could be extended to
systems with a non-constant population by adding assumptions which allow to control the
growth of the population, but we have preferred to keep this part of the problem
simple.

The central limit result arose as a natural extension of this original question,
but, as we already mentioned, it is much more delicate. The extra technical difficulties
are related with the fact that the fluctuations of the process are signed measures (as
opposed to the process $\nu^N_t$ which takes values in a space of probability measures), and
the space of signed measures is not well suited for the study of convergence in
distribution. The natural topology to consider for this space in our setting, that of weak
convergence, is in general not metrizable. One could try to regard this space as the Banach space
dual of the space of continuous bounded functions on $W$ and endow it with its operator
norm, but this topology is too strong in general to obtain tightness for the fluctuations (observe
that, in particular, the total mass of the fluctuations $\sqrt{N}\big(\nu^N_t-\nu_t\big)$
is not a priori bounded uniformly in $N$). To overcome this difficulty we will show convergence
of the fluctuations as a process taking values in the dual of a suitable abstract Hilbert
space of test functions. We will actually have to consider a sequence of embeddings of
Banach and Hilbert spaces, which will help us in controlling the norm of the fluctuations. This approach
is inspired by ideas introduced in \citet{metiv} to study weak convergence of some
measure-valued processes using sequences of Sobolev embeddings. Our proof is based on
\citet{meleard}, where the author proves a similar central limit result for a system of
interacting diffusions associated with Boltzmann equations.

The rest of the paper is organized as follows. Section \ref{sec:model} contains the
description of the general model, Section \ref{sec:lln} presents the law of large numbers
for our system, and Section \ref{sec:clt} presents the central limit theorem, together
with the description of the extra assumptions and the functional analytical setting we
will use to obtain it. All the proofs are contained in Section \ref{sec:proofs}.

\section{Description of the Model}\label{sec:model}

\subsection{Introductory example}

To introduce the basic features of our model and fix some ideas, we begin by presenting one of the basic
examples we have in mind.

\begin{ex}\label{ex:otc}
We consider the model for over-the-counter markets introduced in \citet{dufGarPed}. There
is a ``consol'', which is an asset paying dividends at a constant rate of 1, and there are
$N$ investors that can hold up to one unit of the asset. The total number of units of the
asset remains constant in time, and the asset can be traded when the investors contact
each other and when they are contacted by marketmakers. Each investor is characterized by
whether he or she owns the asset or not, and by an intrinsic type that is ``high'' or
``low''. Low-type investors have a holding cost when owning the asset, while high-type
investors do not. These characteristics will be represented by the set of types
$W=\{ho,hn,lo,ln\}$, where \emph{h} and \emph{l} designate the high- and low-type of an
investor while \emph{o} and \emph{n} designate whether an investor owns or not the asset.

At some fixed rate $\lambda_d$, high-type investors change their type to low. This means
that each investor runs a Poisson process with rate $\lambda_d$ (independent from the
others), and at each event of this process the investor changes his or her intrinsic type
to low (nothing happens if the investor is already of low-type).  Analogously, low-type
investors change to high-type at some rate $\lambda_u$.  The meetings between agents are
defined as follows: each investor decides to look for another investor at rate $\beta$
(understood as before, i.e., at the times of the events of a Poisson process with rate
$\beta$), chooses the investor uniformly among the set of $N$ investors, and tries to
trade. Additionally, each investor contacts a marketmaker at rate $\rho$. The marketmakers
pair potential buyers and sellers, and the model assumes that this pairing happens
instantly. At equilibrium, the rate at which investors trade through marketmakers is
$\rho$ times the minimum between the fraction of investors willing to buy and the fraction
of investors willing to sell (see \citet{dufGarPed} for more details). In this model, the
only encounters leading to a trade are those between $hn$- and $lo$-agents, since
high-type investors not owning the asset are the only ones willing to buy, while low-type
investors owning the asset are the only ones willing to sell.

Theorem \ref{thm:limit} will imply the following for this model: as $N$
goes to infinity, the (random) evolution of the fraction of agents of each type
converges to a deterministic limit which is the unique solution of the following system of
ordinary differential equations:
\begin{equation}\label{eq:otc}
  \begin{alignedat}{2}
    \dot{u}_{ho}(t)&=&2\beta u_{hn}(t)u_{lo}(t)+\rho\min\{u_{hn}(t),u_{lo}(t)\}+\lambda_uu_{lo}(t)-\lambda_du_{ho}(t),\\
    \dot{u}_{hn}(t)&=&-2\beta u_{hn}(t)u_{lo}(t)-\rho\min\{u_{hn}(t),u_{lo}(t)\}+\lambda_uu_{ln}(t)-\lambda_du_{hn}(t),\\
    \dot{u}_{lo}(t)&=&-2\beta u_{hn}(t)u_{lo}(t)-\rho\min\{u_{hn}(t),u_{lo}(t)\}-\lambda_uu_{lo}(t)+\lambda_du_{ho}(t),\\
    \dot{u}_{ln}(t)&=&2\beta u_{hn}(t)u_{lo}(t)+\rho\min\{u_{hn}(t),u_{lo}(t)\}-\lambda_uu_{ln}(t)+\lambda_du_{hn}(t).\\
  \end{alignedat}
\end{equation}
Here $u_{w}(t)$ denotes the fraction of type-$w$ investors at time $t$. This deterministic
limit corresponds to the one proposed in \citet{dufGarPed} for this model (see the
referred paper for the interpretation of this equations and more on this model).
\end{ex}

\subsection{Description of the General Model}\label{subsec:general}

We will denote by $I_N=\{1,\dotsc,N\}$ the set of particles in the system. In line with
our original financial motivation, we will refer to these particles as the ``agents'' in
the system (like the investors of the aforementioned example).  The possible types for the
agents will be represented by a locally compact Polish (i.e., separable, complete,
metrizable) space $W$. Given a metric space $E$, $\cp(E)$ will denote the collection of
probability measures on $E$, which will be endowed with the topology of weak
convergence. When $E=W$, we will simply write $\cp=\cp(W)$. We will denote by $\cp_a$ the
subset of $\cp$ consisting of purely atomic measures.

The Markov process $\nu^N_t$ we are interested in takes values in $\cp_a$ and describes
the evolution of the distribution of the agents over the set of types. We recall that it
is defined as
\[\nu^N_t=\frac{1}{N}\sum_{i=1}^N\delta_{\eta^N_t(i)},\]
where $\delta_w$ is the probability measure on $W$ assigning mass 1 to $w\in W$ and
$\eta^N_t(i)$ corresponds to the type of the agent $i$ at time $t$. In other words, the
vector $\eta^N_t\in W^{I_N}$ gives the configuration of the set of agents at time $t$,
while for any Borel subset $A$ of $W$, $\nu^N_t(A)$ is the fraction of agents whose type
is in $A$ at time $t$.

The dynamics of the process is defined by the following rates:
\begin{itemize}[topsep=2pt,itemsep=-2pt]
\item Each agent decides to change its type at a certain rate $\gamma(w,\nu^N_t)$ that
  depends on its current type $w$ and the current distribution $\nu^N_t$.  The new type is
  chosen according to a probability measure $a(w,\nu^N_t,dw')$ on $W$.
\item Each agent contacts each other agent at a certain rate that depends on their current
  types $w_1$ and $w_2$ and the current distribution $\nu^N_t$: the total rate at which a
  given type-$w_1$ agent contacts type-$w_2$ agents is given by
  $N\lambda(w_1,w_2,\nu^N_t)\nu^N_t(\{w_2\})$. After a pair of agents meet, they choose
  together a new pair of types according to a probability measure
  $b(w_1,w_2,\nu^N_t,dw_1'\ootimes dw_2')$ (not necessarily symmetric in $w_1,w_2$) on $W\!\times\!W$.
\end{itemize}
For a fixed $\mu\in\cp_a$, $a(w,\mu,dw')$ and $b(w_1,w_2,\mu,dw_1'\ootimes dw_2')$ can be
interpreted, respectively, as the transition kernels of Markov chains in $W$ and $W\!\times\!W$.

Let $\mw$ be the collection of bounded measurable functions on $W$ and $\cbw$ be the
collection of bounded continuous functions on $W$. For $\nu\in\cp$ and $\varphi\in\mw$
(or, more generally, any measurable function $\varphi$) we write
\[\br{\nu}{\varphi}=\int_W\!\varphi\,d\nu.\]
Observe that
\[\br{\nu^N_t}{\varphi}=\frac{1}{N}\sum_{i=1}^N\varphi(\eta^N_t\!(i)).\]

We make the following assumption:

\begin{assum}\label{assum:1}
  \mbox{}
  \begin{enumerate}[label=(\ref*{assum:1}\arabic{*}),ref=\ref*{assum:1}\arabic{*},topsep=2pt]
  \item\label{assum:1:bd} The rate functions $\gamma(w,\nu)$ and $\lambda(w,w',\nu)$ are
    defined for all $\nu\in\cp$. They are non-negative, measurable in $w$ and $w'$,
    bounded respectively by constants $\ogamma$ and
    $\olambda$, and continuous in $\nu$.
  \item\label{assum:1:meas} $a(w,\nu,\cdot)$ and $b(w,w',\nu,\cdot)$ are measurable in $w$ and $w'$.
  \item\label{assum:1:lip} The mappings
    \begin{subequations}
      \begin{align}
        \nu&\longmapsto\int_W\!\gamma(w,\nu)\,a(w,\nu,\cdot)\,\nu(dw)\qquad\text{and}
        \label{eq:totalRate1}\\
        \nu&\longmapsto\int_W\!\int_W\!\lambda(w_1,w_2,\nu)\,b(w_1,w_2,\nu,\cdot)\,\nu(dw_2)\,\nu(dw_1),
        \label{eq:totalRate2}
      \end{align}
    \end{subequations}
    which assign to each $\nu\in\cp_a$ a finite measure on $W$ and $W\!\times\!W$,
    respectively, are continuous with respect to the topology of weak convergence and
    Lipschitz with respect to the total variation norm: there are constants
    $C_a,C_b>0$ such that
    \begin{multline*}
      \left\|\int_W\!\gamma(w,\nu_1)a(w,\nu_1,\cdot)\,\nu_1(dw)
        -\int_W\!\gamma(w,\nu_2)a(w,\nu_2,\cdot)\,\nu_2(dw)\right\|_\text{TV}
      \leq C_a\|\nu_1-\nu_2\|_\text{TV}
    \end{multline*}
    and
    \begin{multline*}
      \Bigg\|\int_W\!\int_W\!\lambda(w_1,w_2,\nu_1)b(w_1,w_2,\nu_1,
      \cdot)\,\nu_1(dw_2)\,\nu_1(dw_1)\\
      -\int_W\!\int_W\!\lambda(w_1,w_2,\nu_2)b(w_1,w_2,\nu_2,
      \cdot)\,\nu_2(dw_2)\,\nu_2(dw_1)\Bigg\|_\text{TV}
      \leq C_b\|\nu_1-\nu_2\|_\text{TV}.
    \end{multline*}
  \end{enumerate}
\end{assum}

We recall that the total variation norm of a signed measure $\mu$ is defined by
\[\norm\mu_\text{TV}=\sup_{\varphi:\,\norm\varphi_\infty\leq1}\left|\br{\mu}{\varphi}\right|.\]
\eqref{assum:1:lip} is satisfied, in particular, whenever the rates do not depend on
$\nu$.

\section{Law of large numbers for \texorpdfstring{$\nu^N_t$}{\textbackslash
    nu\textcircumflex N\textunderscore t}}\label{sec:lln}

Our first result shows that the process $\nu^N_t$ converges in distribution, as the number
of agents $N$ goes to infinity, to a deterministic limit that is characterized by a
measure-valued system of differential equations (written in its weak form).

Given a metric space $S$, we will denote by $D([0,T],S)$ the space of c\`adl\`ag functions
$\nu\!:[0,T]\longrightarrow S$, and we endow these spaces with the Skorohod topology (see
\citet{ethKur} or \citet{bill} for a reference on this topology and weak convergence in
general). Observe that our processes $\nu^N_t$ have paths on $D([0,T],\cp)$ (recall that
we are endowing $\cp$ with the topology of weak convergence, which is metrizable). We will
also denote by $C([0,T],S)$ the space of continuous functions $\nu\!:[0,T]\longrightarrow
S$.

\begin{teo}\label{thm:limit}
  Suppose that Assumption \ref{assum:1} holds. For any given $T>0$, consider the sequence
  of $\cp$-valued processes $\nu^N_t$ on $[0,T]$, and assume that the sequence of initial
  distributions $\nu^N_0$ converges in distribution to some fixed $\nu_0\in\cp$.  Then the
  sequence $\nu^N_t$ converges in distribution in $D([0,T],\cp)$ to a deterministic
  $\nu_t$ in $C([0,T],\cp)$, which is the unique solution of the following system of
  integro-differential equations: for every $\varphi\in\mw$ and $t\in[0,T]$,
  \begin{equation}\label{eq:intDiffSys}\tag{\thesys}
    \begin{aligned}
      \br{\nu_t}{\varphi}=\br{\nu_0}{\varphi}&+\int_0^t\!\int_W\!\gamma(w,\nu_s)
      \int_W\!\dif\,a(w,\nu_s,dw')\,\nu_s(dw)\,ds\\
      &+\int_0^t\!\int_W\!\int_W\!\lambda(w_1,w_2,\nu_s)\int_{W\!\times\!W}\!\difd\\
      &\hspace{1.4in}\cdot b(w_1,w_2,\nu_s,dw_1'\ootimes
      dw_2')\,\nu_s(dw_2)\,\nu_s(dw_1)\,ds.
    \end{aligned}
  \end{equation}
\end{teo}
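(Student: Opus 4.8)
The plan is to use the standard martingale--problem route to laws of large numbers for measure-valued particle systems, as in \citet{fourMel} and \citet{ethKur}: (1) obtain a semimartingale decomposition of $t\mapsto\br{\nu^N_t}{\varphi}$ from the generator of $\nu^N$; (2) show the martingale part is asymptotically negligible; (3) prove tightness of $\{\nu^N_\cdot\}_N$ in $D([0,T],\cp)$; (4) show every limit point solves \eqref{eq:intDiffSys}; and (5) prove uniqueness of the solution of \eqref{eq:intDiffSys}, which makes the limit deterministic and forces the whole sequence to converge (existence of the solution then being a by-product of (3)--(4)). It suffices throughout to take $\varphi\in\cbw$, in fact $\varphi$ in a fixed countable convergence-determining subset of $\cbw$: once \eqref{eq:intDiffSys} holds for such $\varphi$ it extends to all $\varphi\in\mw$ by a bounded monotone class argument, since for fixed $\nu_s$ both sides of \eqref{eq:intDiffSys} are linear and bounded, hence continuous under bounded pointwise convergence, in $\varphi$.

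\emph{Steps (1)--(2).} Computing the generator of $\nu^N$ applied to $\nu\mapsto\br{\nu}{\varphi}$ and using that a single-agent type change moves $\br{\nu^N}{\varphi}$ by $\tfrac1N\dif$ while a pairwise interaction moves it by $\tfrac1N\difd$, Dynkin's formula shows that $M^{N,\varphi}_t:=\br{\nu^N_t}{\varphi}-\br{\nu^N_0}{\varphi}-\int_0^t\big(\Psi_\varphi(\nu^N_s)+R^{N}_s\big)\,ds$ is a martingale, where $\Psi_\varphi(\nu)$ is exactly the integrand on the right-hand side of \eqref{eq:intDiffSys} and $|R^{N}_s|\le C\norm\varphi_\infty/N$ (the remainder coming from the finite-$N$, e.g.\ diagonal, corrections to the double sum over agents). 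Its predictable quadratic variation is $\langle M^{N,\varphi}\rangle_t=\int_0^t(\text{jump rate})\times(\text{jump size})^2\,ds$; the rates in the model are such that the total jump rate is $O(N)$ (each agent changes type and initiates interactions at rates controlled by $\ogamma$ and $\olambda$) while each jump size is $O(\norm\varphi_\infty/N)$, so $\ee\big[(M^{N,\varphi}_t)^2\big]=\ee\big[\langle M^{N,\varphi}\rangle_t\big]\le CT\norm\varphi_\infty^2/N$. By Doob's inequality $\sup_{t\le T}|M^{N,\varphi}_t|\to0$ in $L^2$; combined with $\br{\nu^N_0}{\varphi}\to\br{\nu_0}{\varphi}$ in distribution, the noise disappears in the limit and only the drift survives.

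\emph{Steps (3)--(4).} For tightness in $D([0,T],\cp)$ I would use the usual criterion: the compact containment condition, together with tightness in $D([0,T],\rr)$ of $\br{\nu^N_\cdot}{\varphi}$ for each $\varphi$ in the countable family above. The latter is immediate from the decomposition of Step~(1): $\br{\nu^N_\cdot}{\varphi}$ equals an initial value plus a time integral whose integrand is bounded by $C\norm\varphi_\infty$ (since $\nu\in\cp$ has mass one and the rates are bounded, $\Psi_\varphi$ is bounded on $\cp$) plus an $O(1/N)$ remainder plus the vanishing martingale, so Aldous' criterion holds. Compact containment follows from the tightness of $\{\nu^N_0\}_N$ in $\cp$ (implied by its convergence) together with a control on the mass leaving compact subsets of $W$; when $W$ is only locally compact the cleanest route is to first prove tightness in $D([0,T],\cp(\widehat W))$ for a compactification $\widehat W$ of $W$ --- where compact containment is automatic, $\cp(\widehat W)$ being compact --- and then check, as part of the identification, that limit points assign no mass to $\widehat W\setminus W$. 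Now let $\nu_\cdot$ be any subsequential limit. The uniform smallness of the jumps of $\nu^N_\cdot$ forces $\nu_\cdot\in C([0,T],\cp)$, and along the subsequence $\nu^N_t\Rightarrow\nu_t$ for each fixed $t$ (the evaluation maps being a.s.\ continuous at the continuous limit). Passing to the limit in the decomposition then reduces to $\int_0^t\Psi_\varphi(\nu^N_s)\,ds\to\int_0^t\Psi_\varphi(\nu_s)\,ds$, which follows from the continuity of $\nu\mapsto\Psi_\varphi(\nu)$ on $\cp$ with respect to weak convergence --- precisely the continuity built into Assumption~\ref{assum:1} --- together with the uniform bound on $\Psi_\varphi$ and dominated convergence. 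Hence $\nu_\cdot$ solves \eqref{eq:intDiffSys} for all $\varphi\in\cbw$, and so for all $\varphi\in\mw$.

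\emph{Step (5) and conclusion.} If $\nu^{(1)}_\cdot,\nu^{(2)}_\cdot$ both solve \eqref{eq:intDiffSys}, then taking the supremum over $\varphi$ with $\norm\varphi_\infty\le1$ yields $\norm{\nu^{(1)}_t-\nu^{(2)}_t}_\text{TV}\le C\int_0^t\norm{\nu^{(1)}_s-\nu^{(2)}_s}_\text{TV}\,ds$, where $C$ is the Lipschitz constant of the drift functional in total variation provided by \eqref{assum:1:lip} together with \eqref{assum:1:bd}; Gronwall's lemma gives $\nu^{(1)}=\nu^{(2)}$. Consequently all subsequential limits coincide with this unique deterministic solution --- whose existence has just been obtained --- and $\nu^N_\cdot$ converges in distribution to it in $D([0,T],\cp)$. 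The step I expect to be the real obstacle is the combination of tightness and identification in the noncompact case: one must be certain that weak convergence of the empirical measures at fixed times is genuinely enough to pass to the limit inside the \emph{nonlinear} functional $\Psi_\varphi$ --- which is exactly what the continuity-in-$\nu$ hypotheses of Assumption~\ref{assum:1} are there to ensure, and where a careless argument only yields convergence against too restricted a class of test functions --- and that no mass escapes to infinity, which either needs an extra growth/flux control or must be recovered a posteriori via a compactification.
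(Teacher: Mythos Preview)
Your proposal is correct and follows essentially the same route as the paper's (very brief) proof: the paper derives the semimartingale decomposition \eqref{eq:intForm} and the quadratic-variation formula \eqref{eq:formCoQuadN}, observes that the latter is $O(1/N)$, and then sketches tightness, identification of limit points, and Gronwall-based uniqueness, deferring all details to \citet{fourMel}.

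Two small remarks. First, your remainder term $R^N_s$ is unnecessary here: in this paper's model an agent may interact with itself (the pairwise rate is written against $\nu^N_s\otimes\nu^N_s$, not against the off-diagonal part), so the generator applied to $\nu\mapsto\br{\nu}{\varphi}$ gives \emph{exactly} $\Psi_\varphi(\nu^N_s)$, as in \eqref{eq:intForm}, with no finite-$N$ correction. Second, your compactification discussion is more careful than the paper's sketch, which simply asserts that tightness of each $\br{\nu^N_\cdot}{\varphi}$ implies tightness of $\nu^N_\cdot$ in $D([0,T],\cp)$; you are right that this is where the work hides when $W$ is noncompact, and your proposed route via $\cp(\widehat W)$ is a standard and sound way to handle it.
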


Observe that, in particular,
\eqref{eq:intDiffSys} implies that for every Borel set $A\subseteq W$ and almost every
$t\in[0,T]$,
\begin{equation}\label{eq:diffSys}\tag{\thesys${}^\prime$}
  \begin{aligned}
    \hspace{0\in}\frac{d\nu_t(A)}{dt}&=-\int_A\!\left(\gamma(w,\nu_t)
      +\int_W\!\big(\lambda(w,w',\nu_t)+\lambda(w',w,\nu_t)\big)\nu_t(dw')\right)\nu_t(dw)\\
    &\hspace{0.0in}+\int_W\!\gamma(w,\nu_t)a(w,\nu_t,A)\,\nu_t(dw)\\
    &\hspace{0.0in}+\int_W\!\int_W\!\lambda(w,w',\nu_t)\Big[b(w,w',\nu_t,A\!\times\!W)+b(w,w',\nu_t,W\!\times\!A)\Big]
    \,\nu_t(dw')\,\nu_t(dw).
  \end{aligned}
\end{equation}
Furthermore, standard measure theory arguments allow to show that the system
\eqref{eq:diffSys} actually characterizes the solution of \eqref{eq:intDiffSys} (by
approximating the test functions $\varphi$ in \eqref{eq:intDiffSys} by simple functions).

\eqref{eq:diffSys} has an intuitive interpretation: the first term on the right side is
the total rate at which agents leave the set of types $A$, the second term is the rate at
which agents decide to change their types to a type in $A$, and the third term is the rate
at which agents acquire types in $A$ due to interactions between them.

The following corollary of the previous result is useful when writing and analyzing the
limiting equations \eqref{eq:intDiffSys} or \eqref{eq:diffSys} (see, for instance, Example
\ref{ex:infPerc}).

\begin{cor}\label{cor:absCont}
  In the context of Theorem \ref{thm:limit}, assume that $\nu_0$ is absolutely continuous
  with respect to some measure $\mu$ on $W$ and that the measures
  \[\int_W\!\gamma(w,\nu_0)a(w,\nu_0,\cdot)\,\nu_0(dw)
  \quad\text{and}\quad\int_W\!\int_W\!\lambda(w_1,w_2,\nu_0)b(w_1,w_2,\nu_0,\cdot)\,\nu_0(dw_1)\,\nu_0(dw_2)\]
  are absolutely continuous with respect to $\mu$ and $\mu\ootimes\mu$, respectively. Then
  the limit $\nu_t$ is absolutely continuous with respect to $\mu$ for all $t\in[0,T]$.
\end{cor}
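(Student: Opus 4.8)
The plan is to exploit the uniqueness of the solution to \eqref{eq:intDiffSys} together with a Picard-type construction of $\nu_t$ that manifestly preserves absolute continuity with respect to $\mu$. Concretely, I would first recast \eqref{eq:intDiffSys} as a fixed-point equation for the curve $t\mapsto\nu_t$ in $\cp$ equipped with the total variation norm, using the integrated form \eqref{eq:diffSys}: $\nu_t=\nu_0+\int_0^t F(\nu_s)\,ds$, where, for $\nu\in\cp$, $F(\nu)$ is the signed measure on $W$ given by the right-hand side of \eqref{eq:diffSys}. By Assumption \eqref{assum:1:lip} (and the boundedness in \eqref{assum:1:bd}), the map $F$ is Lipschitz in total variation on $\cp$ with some constant $L$, so the Picard iterates $\nu^{(0)}_t\equiv\nu_0$, $\nu^{(n+1)}_t=\nu_0+\int_0^tF(\nu^{(n)}_s)\,ds$ converge uniformly on $[0,T]$ in total variation to a curve $\hat\nu_t$ solving the integrated equation, and by the uniqueness asserted in Theorem \ref{thm:limit}, $\hat\nu_t=\nu_t$.

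Next I would show by induction on $n$ that each iterate $\nu^{(n)}_t$ is absolutely continuous with respect to $\mu$ for every $t\in[0,T]$. The base case is the hypothesis $\nu_0\ll\mu$. For the inductive step, suppose $\nu^{(n)}_s\ll\mu$ for all $s$; I must check that $F(\nu^{(n)}_s)\ll\mu$ and that $\int_0^tF(\nu^{(n)}_s)\,ds\ll\mu$. Inspecting the three lines of \eqref{eq:diffSys}: the first (the ``leaving'' term) is a measure absolutely continuous with respect to $\nu^{(n)}_s$, hence with respect to $\mu$; the second term, $\int_W\gamma(w,\nu^{(n)}_s)a(w,\nu^{(n)}_s,\cdot)\,\nu^{(n)}_s(dw)$, is absolutely continuous with respect to $\mu$ by the hypothesis on $\int_W\gamma(w,\nu_0)a(w,\nu_0,\cdot)\,\nu_0(dw)\ll\mu$ \emph{together with} the Lipschitz bound of \eqref{assum:1:lip}, which controls the change in this measure (in total variation, hence in particular its singular part) when one replaces $\nu_0$ by $\nu^{(n)}_s$; and similarly the third term is absolutely continuous with respect to $\mu$ using the hypothesis that $\int_W\int_W\lambda\,b\,\nu_0(dw_1)\,\nu_0(dw_2)\ll\mu\ootimes\mu$ (projecting onto each coordinate gives absolute continuity with respect to $\mu$) together with the corresponding Lipschitz bound. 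Then $F(\nu^{(n)}_s)\ll\mu$ uniformly, and since the densities $dF(\nu^{(n)}_s)/d\mu$ are uniformly bounded in $L^1(\mu)$ (indeed $\norm{F(\nu^{(n)}_s)}_\text{TV}$ is bounded), the time integral $\int_0^tF(\nu^{(n)}_s)\,ds$ is absolutely continuous with respect to $\mu$ as well: for a $\mu$-null set $B$ one has $\big|\int_0^tF(\nu^{(n)}_s)(B)\,ds\big|\leq\int_0^t|F(\nu^{(n)}_s)|(B)\,ds=0$.

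Finally, absolute continuity passes to the limit: since $\nu^{(n)}_t\to\nu_t$ in total variation, and each $\nu^{(n)}_t\ll\mu$, for any $\mu$-null set $B$ we get $\nu_t(B)=\lim_n\nu^{(n)}_t(B)=0$, so $\nu_t\ll\mu$ for every $t\in[0,T]$. The main obstacle is the inductive step, specifically justifying that the interaction terms in \eqref{eq:diffSys} inherit absolute continuity from the hypotheses at $\nu_0$: one cannot simply quote absolute continuity of the maps \eqref{eq:totalRate1}--\eqref{eq:totalRate2} at an arbitrary argument, so one has to argue via the total variation Lipschitz estimate that the ``extra mass'' accumulated along the iteration, being total-variation-small relative to $\mu$-absolutely-continuous increments, cannot create a singular component — that is, decompose $F(\nu)-F(\nu_0)$ and note its Lebesgue decomposition relative to $\mu$ has a singular part bounded in total variation by $L\norm{\nu-\nu_0}_\text{TV}$, while iterating and using Gronwall forces that singular part to vanish in the limit. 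A cleaner route to the same end is to run the Picard iteration not in $\cp$ but directly in the Banach space $L^1(\mu)$ of densities (after checking $F$ maps $\mu$-absolutely-continuous measures with the stated structure to $L^1(\mu)$ and is Lipschitz there), obtaining a solution in $L^1(\mu)$ whose embedding into $\cp$ must, by uniqueness, coincide with $\nu_t$; I would present whichever of these is shortest given the estimates already established for Theorem \ref{thm:limit}.
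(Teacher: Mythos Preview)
Your approach differs substantially from the paper's. The paper argues probabilistically through the particle system $\nu^N_t$: it introduces the jump times $\tau^N_i$, shows inductively (using the strong Markov property and the decomposition \eqref{eq:intForm}) that $\br{\nu^N_{t\wedge\tau^N_i}}{\varphi}=0$ almost surely for any nonnegative $\varphi$ supported in a $\mu$-null set, and then passes to the limit via Theorem~\ref{thm:limit}. You instead work purely at the level of the deterministic equation via Picard iteration. The probabilistic route exploits that $\nu^N_t$ is piecewise constant between jumps, so the problem reduces to checking that a single jump does not land in a $\mu$-null set; your route, if it worked, would be self-contained at the level of the limit equation and would not need the approximating system at all.

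There is, however, a genuine gap in your inductive step. You want $G(\nu^{(n)}_s):=\int_W\gamma(w,\nu^{(n)}_s)\,a(w,\nu^{(n)}_s,\cdot)\,\nu^{(n)}_s(dw)\ll\mu$ whenever $\nu^{(n)}_s\ll\mu$, and you invoke the hypothesis $G(\nu_0)\ll\mu$ together with \eqref{assum:1:lip}. But the Lipschitz estimate only bounds $\|G(\nu^{(n)}_s)-G(\nu_0)\|_{\mathrm{TV}}$; it says nothing about the Lebesgue decomposition of that difference, so $G(\nu^{(n)}_s)$ may carry a $\mu$-singular part of size up to $C_a\|\nu^{(n)}_s-\nu_0\|_{\mathrm{TV}}$, which is nonzero already at $n=1$. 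Your Gronwall repair inherits the same defect: tracking $s_t=\|\nu_t^{\mathrm{sing}}\|_{\mathrm{TV}}$ you obtain at best
\[
s_t\leq\int_0^t\!\big[C\,s_r+L\,\|\nu_r-\nu_0\|_{\mathrm{TV}}\big]\,dr,
\]
with a nonvanishing inhomogeneous term, so Gronwall gives a bound rather than zero. The same obstruction blocks the $L^1(\mu)$ route, since the stated hypothesis does not let you verify that $F$ maps $\mu$-absolutely-continuous inputs into $L^1(\mu)$. An analytic argument along your lines would require the stronger hypothesis that $G(\nu)\ll\mu$ (and likewise for the $b$-term marginals) for \emph{every} $\nu\ll\mu$, not merely for $\nu=\nu_0$.
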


The following two examples show two different kinds of models: one with a finite type
space and the other with $W=\rr$. The first model is the one given in Example \ref{ex:otc}.

\begin{ex}[Continuation of Example \ref{ex:otc}]\label{ex:otc2}
  To translate into our framework the model for over-the-counter markets of
  \citet{dufGarPed}, we take $W=\{ho,hn,lo,ln\}$
  and consider a set of parameters $\gamma$, $a$, $\lambda$, and $b$ with all but
  $\lambda$ being independent of $\nu^N_t$. Let
  \begin{align*}
    \gamma(ho)=\gamma(hn)&=\lambda_d,&a(ho,\cdot)&=\delta_{lo},&a(hn,\cdot)&=\delta_{ln},\\
    \gamma(lo)=\gamma(ln)&=\lambda_u,&a(lo,\cdot)&=\delta_{ho},&a(ln,\cdot)&=\delta_{hn}.
  \end{align*}
  Observe that with this definition, high-type investors become low-type at rate
  $\lambda_d$ and low-type investors become high-type at rate $\lambda_u$, just as
  required. For the encounters between agents we take
  \begin{gather*}
    \lambda(hn,lo,\nu)=\lambda(lo,hn,\nu)=
    \begin{cases}
      \beta+\frac{\rho}{2}\frac{\nu(\{hn\})\wedge\nu(\{lo\})}{\nu(\{hn\})\nu(\{lo\})} &
      \text{if
        $\nu(\{hn\})\nu(\{lo\})>0$,}\\
      \beta & \text{if $\nu(\{hn\})\nu(\{lo\})=0$,}
    \end{cases}\\
    b(hn,lo,\nu,\cdot)=\delta_{(ho,ln)},\qquad\text{and}\qquad
    b(lo,hn,\nu,\cdot)=\delta_{(ln,ho)}
  \end{gather*}
  (where $a\wedge b=\min\{a,b\}$), and for all other pairs $w_1,w_2\in W$,
  $\lambda(w_1,w_2,\nu)=0$ (recall that the only encounters leading to a trade are those
  between $hn$- and $lo$-agents and vice versa, in which case trade always occurs). The
  rates $\lambda(hn,lo,\nu)$ and $\lambda(lo,hn,\nu)$ have two terms: the rate $\beta$
  corresponding to the rate at which $hn$-agents contact $lo$-agents, plus a second rate
  reflecting trades carried out via a marketmaker. The form of this second rate assures
  that $hn$- and $lo$- agents meet through marketmakers at the right rate of
  $\rho\,\nu(\{hn\})\wedge\nu(\{lo\})$. It is not difficult to check that these
  parameters satisfy Assumption \ref{assum:1}, using the fact that
  $x\wedge y=(x+y-|x-y|)/2$ for $x,y\in\rr$.

  Now let $u_w(t)=\nu_t(\{w\})$, where $\nu_t$ is the limit of $\nu^N_t$ given by Theorem
  \ref{thm:limit}. We need to compute the right side of \eqref{eq:diffSys} with $A=\{w\}$
  for each $w\in W$. Take, for example, $w={ho}$. We get
  \begin{equation}\label{eq:otc1}
    \dot{u}_{ho}(t)=\lambda_uu_{lo}(t)-\lambda_du_{ho}(t)+\beta
    u_{hn}(t)u_{lo}(t)+\frac{\rho}{2}u_{hn}(t)\wedge u_{lo}(t)
    +\beta u_{lo}(t)u_{hn}(t)+\frac{\rho}{2}u_{hn}(t)\wedge u_{lo}(t),
  \end{equation}
  which corresponds exactly to the first equation in \eqref{eq:otc}. The other three
  equations follow similarly.
\end{ex}

\begin{ex}\label{ex:infPerc}
  Our second example is based on the model for information percolation in large markets
  introduced in \citet{dufMan}. We will only describe the basic features of the model, for
  more details see the cited paper. There is a random variable $X$ of concern to all
  agents which has two possible values, ``high'' or ``low''. Each agent holds some
  information about the outcome of $X$, and this information is summarized in a real
  number $x$ which is a sufficient statistic for the posterior probability assigned by the
  agent (given his or her information) to the outcome of $X$ being high. We take these
  statistics as the types of the agents (so $W=\rr$). The model is set up so that these
  statistics satisfy the following: after a type-$x_1$ agent and a type-$x_2$ agent meet
  and share their information, $x_1+x_2$ becomes a sufficient statistic for the posterior
  distribution of $X$ assigned by both agents given now their shared information.

  In this model the agents change types only after contacting other agents, so we take
  $\gamma\equiv0$, and encounters between agents occur at a constant rate $\lambda>0$.  The
  transition kernel for the types of the agents after encounters is independent of
  $\nu^N_t$ and is given by
  \[b(x_1,x_2,\cdot)=b(x_2,x_1,\cdot)=\delta_{(x_1+x_2,x_1+x_2)}\] for every
  $x_1,x_2\in\rr$. This choice for the parameters trivially satisfies Assumption
  \ref{assum:1}.

  To compute the limit of the process, let $A$ be a Borel subset of $\rr$. Then, since
  $\gamma\equiv0$ and $\lambda$ is constant, \eqref{eq:diffSys} gives
  \begin{align*}
    \dot{\nu}_t(A)&=-2\lambda\nu_t(A)+\lambda\int_{\rr^2}\!\left(\delta_{(x+y,x+y)}(\rr\!\times\!A)
      +\delta_{(x+y,x+y)}(A\!\times\!\rr)\right)\nu_t(dy)\,\nu_t(dx)\\
    &=-2\lambda\nu_t(A)+2\lambda\int_{\rr^2}\!\delta_{x+y}(A)\,\nu_t(dy)\,\nu_t(dx)
    =-2\lambda\nu_t(A)+2\lambda\int_{-\infty}^\infty\!\nu_t(A-x)\,\nu_t(dx),
  \end{align*}
  where $A-x=\{y\in\rr\!:\,y+x\in A\}$. Therefore,
  \begin{equation}
    \dot{\nu}_t(A)=-2\lambda\nu_t(A)+2\lambda(\nu_t\!\ast\!\nu_t)(A).\label{eq:infPerc}
  \end{equation}

  Using Corollary \ref{cor:absCont} we can write the last equation in a nicer form: if we
  assume that the initial condition $\nu_0$ is absolutely continuous with respect to the
  Lebesgue measure, then the measures $\nu_t$ have a density $g_t$ with respect to the
  Lebesgue measure, and we obtain
  \begin{equation}
    \dot{g}_t(x)=-2\lambda g_t(x)+2\lambda\int_{-\infty}^\infty\!g_t(z-x)g_t(z)\,dz=-2\lambda g_t(x)+2\lambda
    (g_t\!\ast\!g_t)(x).
  \end{equation}
  This is the system of integro-differential equations proposed in \citet{dufMan} for this
  model (except for the factor of $2$, which is omitted in that paper).
\end{ex}

\section{Central limit theorem for \texorpdfstring{$\nu^N_t$}{\textbackslash
    nu\textcircumflex N\textunderscore t}}\label{sec:clt}
Theorem \ref{thm:limit} gives the law of large numbers for $\nu^N_t$, in the sense that it
obtains a deterministic limit for the process as the size of the market goes to
infinity. We will see now that, under some additional hypotheses, we can also obtain a
central limit result for our process: the fluctuations of $\nu^N_t$ around the limit
$\nu_t$ are of order $1/\sqrt{N}$, and they have, asymptotically, a Gaussian nature. As we
mentioned in the \nameref{sec:intr}, this result is much more delicate than Theorem
\ref{thm:limit}, and we will need to work hard to find the right setting for it.

The \emph{fluctuations} process is defined as follows:
\[\sigma^N_t=\sqrt{N}\big(\nu^N_t-\nu_t\big).\]
$\sigma^N_t$ is a sequence of finite signed measures, and our goal is to prove that it
converges to the solution of a system of stochastic differential equations driven by a
Gaussian process. As we explained in the \nameref{sec:intr}, regarding the fluctuations
process as taking values in the space of signed measures, and endowing this space with the
topology of weak convergence (which corresponds to seeing the process as taking values in
the Banach space dual of $\cbw$ topologized with the weak${}^*$ convergence) is not the
right approach for this problem.  The idea will be to replace the test function space
$\cbw$ by an appropriate Hilbert space $\chu$ and regard $\sigma^N_t$ as a linear
functional acting on this space via the mapping
$\varphi\longmapsto\br{\sigma^N_t}{\varphi}$. In other words, we will regard $\sigma^N_t$
as a process taking values in the dual $\chup$ of a Hilbert space $\chu$.

The space $\chu$ that we choose will depend on the type space $W$. Actually, whenever $W$
is not finite we will not need a single space, but a chain of seven spaces embedded in a
certain structure.  Our goal is to handle (at least) the following four possibilities for
$W$: a finite set, $\zz^d$, a ``sufficiently smooth'' compact subset of $\rr^d$, and all
of $\rr^d$. We wish to handle these cases under a unified framework, and this will
require us to abstract the necessary assumptions on our seven spaces and the parameters of
the model. We will do this in Sections \ref{sec:genSett} and \ref{sec:statemCLT}, and then
in Section \ref{sec:applTypeSpaces} we will explain how to apply this abstract setting to
the four type spaces $W$ that we just mentioned.

\subsection{General setting}\label{sec:genSett}

During this and the next subsection we will assume as given the
collection of spaces in which our problem will be embedded, and then we will make
some assumptions on the parameters of our process that will assure that they are compatible
with the structure of these spaces. The idea of this part is that we will try to impose as
little as possible on these spaces, leaving their definition to be specified for the
different cases of type space $W$.

The elements we will use are the following:
\begin{itemize}[itemsep=-2pt,topsep=2pt]
\item Four separable Hilbert spaces of measurable functions on $W$, $\chu$, $\chd$,
  $\cht$, and $\chf$.
\item Three Banach spaces of continuous functions on $W$, $\ccz,\ccd$, and $\cct$.
\item Five continuous functions
  $\rho_0,\rho_1,\rho_2,\rho_3,\rho_4\!:W\longrightarrow[1,\infty)$ such that
  $\rho_i\leq\rho_{i+1}$ for $i=0,1,2,3$, $\rho_i\in\cci$ for $i=0,2,3$, and for all $w\in W$,
  $\rho^p_1(w)\leq C\rho_4(w)$ for some $C>0$ and $p>1$ (this last requirement is very mild, as we will see
  in the examples below, but will be necessary in the proof of Theorem \ref{thm:clt}).
\end{itemize}

The seven spaces and the five functions introduced above must be related in a specific
way. First, we assume that the following sequence of continuous embeddings holds:
\addtocounter{assum}{1}
\begin{equation}
  \label{eq:normIneq}\tag{\Alph{assum}1}
  \ccz\xhookrightarrow[]{\quad}\chu\xhookrightarrow[\text{c}]{\quad}
  \chd\xhookrightarrow[]{\quad}\ccd\xhookrightarrow[]{\quad}
  \cht\xhookrightarrow[]{\quad}\cct\xhookrightarrow[]{\quad}
  \chf,
\end{equation}
where the c under the second arrow means that the embedding is compact.  We recall that a
continuous embedding $E_1\hookrightarrow E_2$ between two normed spaces $E_1,E_2$ implies,
in particular, that $\|\cdot\|_{E_2}\leq C\|\cdot\|_{E_1}$ for some $C>0$, while saying
that the embedding is compact means that every bounded set in $E_1$ is compact in $E_2$.

Second, we assume that for $i=1,2,3,4$, if $\varphi\in\chii$ then
\begin{equation}
  \left|\varphi(w)\right|\leq C\norm\varphi_\chii\rho_i(w)
  \label{assum:rho:h}\tag{\Alph{assum}2}
\end{equation}
for all $w\in W$, for some $C>0$ which does not depend on $\varphi$.  The same holds for
the spaces $\cci$: for $i=0,2,3$ and $\varphi\in\cci$,
\begin{equation}
  \left|\varphi(w)\right|\leq C\norm\varphi_\cci\rho_i(w).
  \label{assum:rho:c}\tag{\Alph{assum}3}
\end{equation}
The functions $\rho_i$ will typically appear as weighting functions in the definition of the norms
of the spaces $\chii$ and $\cci$. They will dictate the maximum growth rate allowed for functions in
these spaces.

We will denote by $\chip$ and $\ccip$ the topological duals of the spaces $\chii$ and
$\cci$, respectively, endowed with their operator norms (in particular, the spaces $\chip$
and $\ccip$ are Hilbert and Banach spaces themselves). Observe that \eqref{eq:normIneq} implies the following
dual continuous embeddings:
\begin{equation}\label{eq:normIneqD}\tag{\Alph{assum}1${}^\prime$}
  \chfp\xhookrightarrow[]{\quad}\cctp\xhookrightarrow[]{\quad}
  \chtp\xhookrightarrow[]{\quad}\ccdp\xhookrightarrow[]{\quad}
  \chdp\xhookrightarrow[\text{c}]{\quad}\chup\xhookrightarrow[]{\quad}
  \cczp.
\end{equation}

Before continuing, let us describe briefly the main ideas behind the proof of our central
limit theorem, which will help explain why this is a good setting for proving convergence
of the fluctuations process. What we want to prove is that $\sigma^N_t$ converges in
distribution, as a process taking values in $\chup$, to the solution $\sigma_t$ of a
certain stochastic differential equation (see \eqref{eq:clt} below). The approach we will
take to prove this (the proof of Theorem \ref{thm:limit} follows an analogous line) is
standard: we first prove that the sequence $\sigma^N_t$ is tight, then we show that any
limit point of this sequence satisfies the desired stochastic differential equation, and
finally we prove that this equation has a unique solution (in distribution). To achieve
this we will follow the line of proof of \citet{meleard}. Our sequence of embeddings
\eqref{eq:normIneqD} corresponds there to a sequence of embeddings of weighted Sobolev
spaces (see (3.11) in the cited paper); in particular, we will use a very similar sequence
of spaces to deal with the case $W=\rr^d$ in Section \ref{sec:type:rrd}. One important
difficulty with this approach is the following: the operator $J_s$ associated with the
drift term of our limiting equation (see \eqref{eq:defJ}), as well as the corresponding
operators $J^N_s$ for $\sigma^N_t$ (see \eqref{eq:defJN}), cannot in general be taken to
be bounded as operators acting on any of the spaces $\chii$. This forces us to introduce
the spaces $\cci$, on which \eqref{assum:rho:c} plus some assumptions on the rates of the
process will assure that $J_s$ and $J^N_s$ are bounded.

The scheme of proof will be roughly as follows. We will consider the semimartingale
decomposition of the real-valued process $\br{\sigma^N_t}{\varphi}$, for $\varphi\in\chf$,
and then show that the martingale part defines a martingale in $\chfp$. This, together
with a moment estimate on the norm of the martingale part in $\chfp$ and the boundedness
of the operators $J^N_s$ in $\cctp$, will allow us to deduce that $\sigma^N_t$ can be seen
as a semimartingale in $\chtp$, and moreover give its semimartingale decomposition. Next,
we will give a uniform estimate (in $N$) of the norm of $\sigma^N_t$ in $\ccdp$. This
implies the same type of estimate in $\chdp$, and this will allow us to obtain the
tightness of $\sigma^N_t$ in $\chup$. The fact that the embedding
$\chdp\hookrightarrow\chup$ is compact is crucial in this step. Then we will show that all
limit points of $\sigma^N_t$ have continuous paths in $\chup$ and they all satisfy the
desired stochastic differential equation \eqref{eq:clt}. Unfortunately, it will not be
possible to achieve this last part in $\chup$, due to the unboundedness of $J_s$ in this
space. Consequently, we are forced to embed the equation in the (bigger) space
$\cczp$. The boundedness of $J_s$ in $\cczp$ will also allow us to obtain uniqueness for
the solutions of this equation in this space, thus finishing the proof.

Our last assumption (\ref{assum:2clt} below) will assure that our abstract setting is
compatible with the rates defining our process. Before that, we need to replace Assumptions
\eqref{assum:1:bd} and \eqref{assum:1:meas} by stronger versions:

\begin{assum}\label{assum:1clt}
  \mbox{}
  \begin{enumerate}[label=(\ref*{assum:1clt}\arabic{*}),ref=\ref*{assum:1clt}\arabic{*},topsep=2pt]
  \item\label{assum:1clt:1} There is a family of finite measures
    $\big\{\Gamma(w,z,\cdot)\big\}_{w,z\in W}$ on $W$, whose total masses are bounded by
    $\ogamma$, such that for every $w\in W$ and every $\nu\in\cp$ we have
    \[\gamma(w,\nu)a(w,\nu,dw')=\int_W\!\Gamma(w,z,dw')\,\nu(dz).\]
    $\Gamma(w,z,\cdot)$ is measurable in $w$ and continuous in $z$.
  \item There is a family of measures $\big\{\Lambda(w_1,w_2,z,\cdot)\big\}_{w_1,w_2,z\in
      W}$ on $W\!\times\!W$, whose total masses are bounded by $\olambda$, such that for
    every $w_1,w_2\in W$ and every $\nu\in\cp$ we have
    \[\lambda(w_1,w_2,\nu)b(w_1,w_2,\nu,dw_1'\ootimes
    dw_2')=\int_{W\!\times\!W}\!\Lambda(w_1,w_2,z,dw_1'\ootimes dw_2')\,\nu(dz).\]
    $\Lambda(w_1,w_2,z,\cdot)$ is measurable in $w_1$ and $w_2$ and continuous in $z$.
  \end{enumerate}
\end{assum}

The intuition behind this assumption is the following: the total rate at which a type-$w$
agent becomes a type-$w'$ agent is computed by averaging the effect that each agent in the
market has on this rate for the given agent. Observe that, under this assumption,
\eqref{assum:1:lip} holds.

\begin{remark}
  Assumption \ref{assum:1clt} has the effect of linearizing the jump rates in $\nu$. This
  turns out to be very convenient, because it will allow us to express the drift term of
  the stochastic differential equation describing the limiting fluctuations $\sigma_t$
  (\eqref{eq:clt} below) as $J_t\sigma_t$ for some $J_t\in\cczp$ (see \eqref{eq:defJ} and
  \eqref{eq:defJN}).  A more general approach would be to assume that the jump kernels,
  seen as operators acting on $\cczp$, are Fr\'echet differentiable. In that case we would
  need to change the form of the drift operator $J_t$ in the limiting equation and of
  Assumption \ref{assum:2clt} below, but the proof of Theorem \ref{thm:clt} would still
  work, without any major modifications. To avoid extra complications, and since all the
  examples we have in mind satisfy Assumption \ref{assum:1clt}, we will restrict ourselves
  to this simpler case.
\end{remark}

We introduce the following notation: given a measurable function $\varphi$ on $W$, let
\begin{gather*}
  \Gamma\varphi(w;z)=\int_W\!\dif\,\Gamma(w,z,dw')\qquad\text{and}\\
  \Lambda\varphi(w_1,w_2;z)=\int_{W\!\times\!W}\!\difd\,\Lambda(w_1,w_2,z,dw_1'\ootimes
  dw_2').
\end{gather*}
These quantities can be thought of as the jump kernels for the process associated with
the effect of a type-$z$ agent on the transition rates. Averaging these rates with respect
to $\nu^N_t(dz)$ gives the total jump kernel for the process.

\begin{assum}\label{assum:2clt}
  \mbox{}
  \begin{enumerate}[label=(\ref*{assum:2clt}\arabic*),ref=\ref*{assum:2clt}\arabic*,topsep=2pt]
  \item\label{assum:2clt:1} There is a $C>0$ such that for all $w,z\in W$ and $i=0,1,2,3,4$,
    \[\int_W\!\rho^2_i(w')\,\Gamma(w,z,dw')<C\left(\rho^2_i(w)+\rho^2_i(z)\right).\]
  \item\label{assum:2clt:2} There is a $C>0$ such that for all $w_1,w_2,z\in W$ and $i=0,1,2,3,4$,
    \[\int_{W\!\times\!W}\!\left(\rho^2_i(w_1')+\rho^2_i(w_2')\right)
    \,\Lambda(w_1,w_2,z,dw_1'\ootimes
    dw_2')<C\left(\rho^2_i(w_1)+\rho^2_i(w_2)+\rho^2_i(z)\right).\]
  \item\label{assum:2clt:3} Let $\mu_1,\mu_2\in\cp$ be such that
    $\br{\mu_i}{\rho^2_4}<\infty$ and define the following operator acting on measurable
    functions $\varphi$ on $W$:
    \begin{align*}
      \hspace{-0.1in}J_{\mu_1,\mu_2}\varphi(z)=&\int_W\!\Gamma\varphi(w;z)\,\mu_1(dw)
      +\int_W\!\Gamma\varphi(z;x)\,\mu_2(dx)\\
      &+\int_W\!\int_W\!\Lambda\varphi(w_1,w_2;z)\,
      \mu_1(dw_2)\,\mu_1(dw_1)
      +\int_W\!\int_W\!\Lambda\varphi(w,z;x)\,\mu_1(dw)\,\mu_2(dx)\\
      &+\int_W\!\int_W\!\Lambda\varphi(z,w;x)\,\mu_2(dw)\,\mu_2(dx).
    \end{align*}
    Then:
    \begin{enumerate}[label=(\roman*),ref=\ref*{assum:2clt:3}.\roman*,topsep=2pt]
    \item\label{assum:2clt:3i} $J_{\mu_1,\mu_2}$ is a bounded operator on $\cci$,
      for $i=0,2,3$. Moreover, its norm can be bounded uniformly in $\mu_1,\mu_2$.
    \item\label{assum:2clt:3ii} There is a $C>0$ such that given any $\varphi\in\ccz$ and
      any $\mu_1,\mu_2,\mu_3,\mu_4\in\cp$ satisfying $\br{\mu_i}{\rho^2_4}<\infty$,
      \[\norm{\left(J_{\mu_1,\mu_2}-J_{\mu_3,\mu_4}\right)\varphi}_\ccz
      \leq
      C\norm\varphi_\ccz\left(\norm{\mu_1-\mu_3}_\ccdp+\norm{\mu_2-\mu_4}_\ccdp\right).\]
    \end{enumerate}
  \end{enumerate}
\end{assum}

\eqref{assum:2clt:1} and \eqref{assum:2clt:2} correspond to moment assumptions on the
transition rates of the agents, and assure that the agents do not jump ``too
far''. \eqref{assum:2clt:3i} says two things: first, that the jump kernel defined by the
rates preserves the structure of the spaces $\cci$ and, second, that the resulting
operator is bounded, which will imply the boundedness of the drift operators $J_s$ and
$J^N_s$ mentioned above. \eqref{assum:2clt:3ii} involves a sort of strengthening of the
Lipschitz condition \eqref{assum:1:lip} on the rates, and will be used to prove uniqueness
for the limiting stochastic differential equation. Observe that by taking larger weighting
functions $\rho_i$, which corresponds to taking smaller spaces of test functions $\chii$,
we add more moment assumptions on the rates of the process; on the other hand, asking for
more structure on the spaces $\chii$ and $\cci$, such as differentiability in the
Euclidean case, adds more requirements on the regularity of the rates.

\subsection{Statement of the theorem}\label{sec:statemCLT}

For $\xi\in\chip$ (respectively $\ccip$) and $\varphi\in\chii$ (respectively $\cci$) we
will write
\[\br{\xi}{\varphi}=\xi(\varphi).\]

Given $\varphi\in\chu$ and $z\in W$ define
\begin{equation}\label{eq:defJ}
\begin{aligned}
  J_s\varphi(z)=\int_W\!\Gamma\varphi(w;z)\,\nu_s(dw)&+\int_W\!\Gamma\varphi(z;x)\,\nu_s(dx)\\
  &+\int_W\!\int_W\!\Lambda\varphi(w_1,w_2;z)\,\nu_s(dw_2)\,\nu_s(dw_1)\\
  &+\int_W\!\int_W\!\big[\Lambda\varphi(z,w;x)+\Lambda\varphi(w,z;x)\big]\,\nu_s(dw)\,\nu_s(dx)
\end{aligned}
\end{equation}
 Observe that $J_s=J_{\nu_s,\nu_s}$. Therefore, under moment
assumptions on $\nu_s$, \eqref{assum:2clt:3i} implies that $J_s$ is a bounded operator on
each of the spaces $\cci$. Observe that if we integrate the first and third terms on the
right side of \eqref{eq:defJ} with respect to $\nu_s(dz)$, we obtain the integral term in
\eqref{eq:intDiffSys}. In our central limit result, the variable $z$ will be integrated
against the limiting fluctuation process $\sigma_t$. The other two terms in
\eqref{eq:defJ} correspond to fluctuations arising from the dependence of the rates on its
other arguments (the types of the agents involved).

The operator $J_s$ (or, more properly, its adjoint $J_s^*$) will appear in the drift term
of the stochastic differential equation describing the limiting fluctuations process,
which will be expressed as a Bochner integral. We recall that these integrals are an
extension of the Lebesgue integral to functions taking values on a Banach space, see
Section V.5 in \citet{yosida} for details.

\begin{teo}\label{thm:clt}
  Assume that Assumptions \ref{assum:1clt} and \ref{assum:2clt} hold, that
  \eqref{eq:normIneq}, \eqref{assum:rho:h}, and \eqref{assum:rho:c} hold, and that
  \begin{equation}
    \begin{gathered}
      \sqrt{N}(\nu^N_0-\nu_0)\Longrightarrow\sigma_0,\qquad
      \sup_{N\geq0}\,\ee\!\left(\left\|\sqrt{N}\left(\nu^N_0
            -\nu_0\right)\right\|^2_\ccdp\right)<\infty,\\
      \sup_{N\geq0}\,\ee\!\left(\br{\nu^N_0}{\rho^2_4}\right)<\infty,\qquad\text{and}\qquad
      \ee\!\left(\br{\nu_0}{\rho^2_4}\right)<\infty
    \end{gathered}\label{eq:iniFluc}
  \end{equation}
  hold, where the convergence in distribution above is in $\chup$. Then the sequence of
  processes $\sigma^N_t$ converges in distribution in $D([0,T],\chup)$ to a process
  $\sigma_t\in C([0,T],\chup)$. This process is the unique (in distribution) solution in
  $\cczp$ of the following stochastic differential equation:
  \begin{equation}\label{eq:clt}\tag{\thesys}
    \sigma_t=\sigma_0+\int_0^t\!J_s^*\sigma_s\,ds+Z_t,
    \stepcounter{sys}
  \end{equation}
  where the above is a Bochner integral, $J_s^*$ is the adjoint of the operator $J_s$ in
  $\ccz$, and $Z_t$ is a centered $\cczp$-valued Gaussian process with quadratic
  covariations specified by
  \begin{equation}\label{eq:coQuad}
    \begin{aligned}
      \big[Z_\cdot(\varphi_1),Z_\cdot(\varphi_2)\big]_t
      =\int_0^t&\!\int_W\!\int_W\!\int_W\!(\varphi_1(w')-\varphi_1(w))(\varphi_2(w')-\varphi_2(w))\,\Gamma(w,z,dw')\\
      &\hspace{2.5in}\cdot\nu_s(dz)\,\nu_s(dw)\,ds\\
      &+\int_0^t\!\int_W\!\int_W\!\int_W\!\int_{W\!\times\!W}\!
      (\varphi_1(w_1')+\varphi_1(w_2')-\varphi_1(w_1)-\varphi_1(w_2))\\
      &\hspace{1.3in}\cdot(\varphi_2(w_1')+\varphi_2(w_2')-\varphi_2(w_1)-\varphi_2(w_2))\\
      &\hspace{0.8in}\cdot\Lambda(w_1,w_2,z,dw_1'\ootimes dw_2')
      \,\nu_s(dz)\,\nu_s(dw_2)\,\nu_s(dw_1)\,ds
    \end{aligned}
  \end{equation}
  for every $\varphi_1,\varphi_2\in\ccz$.
\end{teo}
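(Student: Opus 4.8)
The plan is to follow the standard three-step programme for such limit theorems — tightness, identification of limit points, and uniqueness of the limiting equation — but carried out across the chain of embeddings \eqref{eq:normIneqD} so that at each stage we work in a space where the relevant operator is bounded. The backbone is the semimartingale decomposition of the real-valued processes $\br{\sigma^N_t}{\varphi}$. First I would write, for $\varphi\in\mw$, the Dynkin-type decomposition $\br{\nu^N_t}{\varphi}=\br{\nu^N_0}{\varphi}+\int_0^t\!\mathcal{L}^N\!\br{\nu^N_s}{\varphi}\,ds+M^N_t(\varphi)$, where $\mathcal{L}^N$ is the generator read off from the rates in Section \ref{subsec:general} and $M^N_t(\varphi)$ is a martingale; subtracting the limiting equation \eqref{eq:intDiffSys} and multiplying by $\sqrt{N}$ gives
\begin{equation*}
  \br{\sigma^N_t}{\varphi}=\br{\sigma^N_0}{\varphi}+\int_0^t\!\br{\sigma^N_s}{J^N_s\varphi}\,ds+\sqrt{N}\,M^N_t(\varphi),
\end{equation*}
where $J^N_s$ is the operator \eqref{eq:defJN} (the analogue of $J_s$ with $\nu_s$ replaced by $\nu^N_s$ plus lower-order $1/\sqrt N$ corrections). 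Using Assumption \ref{assum:1clt} to linearize the rates is what lets this drift be written as $\sigma^N_s$ paired against an operator applied to $\varphi$, rather than a nonlinear functional. The quadratic variation of $\sqrt{N}M^N_t(\varphi)$ is computed from the jump rates and, after using the law of large numbers $\nu^N_s\Rightarrow\nu_s$ of Theorem \ref{thm:limit}, converges to the bracket \eqref{eq:coQuad}.

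Next comes the functional-analytic bookkeeping. I would first check that the martingale part, a priori defined coordinatewise, actually defines a $\chfp$-valued martingale: this uses \eqref{assum:rho:h} for $i=4$ together with the moment assumptions \eqref{assum:2clt:1}--\eqref{assum:2clt:2} and \eqref{eq:iniFluc} to get $\sup_N\ee\big(\sup_{t\leq T}\norm{\sqrt N M^N_t}^2_{\chfp}\big)<\infty$; the point is that summing the squared jump sizes over a basis of $\chf$ is controlled by $\int\rho_4^2\,d(\cdot)$, which the moment assumptions keep finite. Combining this bound with the boundedness of $J^N_s$ on $\cct$ (from \eqref{assum:2clt:3i}, valid uniformly in $N$ and $s$ once we have the $\rho_4^2$-moment control on $\nu^N_s$, propagated in time by a Gronwall argument) and with \eqref{assum:rho:c}, I would show that $\sigma^N_t$ is in fact a semimartingale in $\chtp$ with the displayed decomposition there. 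Then a Gronwall estimate on $\norm{\sigma^N_t}_{\ccdp}$ — using the $\ccd$-boundedness of $J^N_s$, the $\chfp$-martingale moment bound, and the hypothesis $\sup_N\ee(\norm{\sigma^N_0}^2_{\ccdp})<\infty$ — yields $\sup_N\ee\big(\sup_{t\leq T}\norm{\sigma^N_t}^2_{\ccdp}\big)<\infty$, hence the same in $\chdp$. This uniform bound in $\chdp$, fed into an Aldous–Rebolledo tightness criterion and crucially exploiting the \emph{compact} embedding $\chdp\hookrightarrow\chup$ in \eqref{eq:normIneqD}, gives tightness of $\{\sigma^N_t\}$ in $D([0,T],\chup)$; the same estimates give $C$-tightness, so any limit point has continuous paths in $\chup$.

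For identification of limit points, I would pass to a convergent subsequence $\sigma^{N_k}\Rightarrow\sigma$ (by Skorohod, on a common probability space). Fixing $\varphi\in\ccz$, the map $\sigma_\cdot\mapsto\br{\sigma_t}{\varphi}-\br{\sigma_0}{\varphi}-\int_0^t\br{\sigma_s}{J_s\varphi}\,ds$ is continuous on $C([0,T],\chup)$ once we know $J_s\varphi\in\ccz\hookrightarrow\chu$ and that $J^N_s\varphi\to J_s\varphi$ in $\ccz$ (which follows from $\nu^N_s\Rightarrow\nu_s$ and \eqref{assum:2clt:3ii}), so the drift passes to the limit; the martingale term converges to a continuous martingale $Z_t(\varphi)$ whose bracket is \eqref{eq:coQuad}, and a Lévy-type characterization identifies $Z$ as the Gaussian process described. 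Here one must be careful that $J_s$ is \emph{not} bounded on $\chu$ in general, which is exactly why the limiting equation \eqref{eq:clt} is posed in the larger space $\cczp$, using the adjoint $J_s^*$ on $\ccz$ and reading the drift $\int_0^t J_s^*\sigma_s\,ds$ as a Bochner integral in $\cczp$ — the $\ccz$-boundedness from \eqref{assum:2clt:3i} makes $J_s^*$ bounded on $\cczp$ and the integrand Bochner-measurable and integrable given the $\ccdp$-bound on $\sigma_s$. Finally, uniqueness in distribution for \eqref{eq:clt}: subtracting two solutions, the Gaussian parts cancel (they have the same covariance, driven by the deterministic $\nu_s$), so the difference $\delta_t$ solves $\delta_t=\int_0^t J_s^*\delta_s\,ds$ in $\cczp$; the Lipschitz-type bound \eqref{assum:2clt:3ii}, i.e. $\norm{(J_{\mu_1,\mu_2}-J_{\mu_3,\mu_4})\varphi}_\ccz\leq C\norm\varphi_\ccz(\norm{\mu_1-\mu_3}_{\ccdp}+\norm{\mu_2-\mu_4}_{\ccdp})$, combined with $\cczp$-boundedness of $J_s^*$, gives $\norm{\delta_t}_{\cczp}\leq C\int_0^t\norm{\delta_s}_{\cczp}\,ds$ and Gronwall forces $\delta\equiv0$; uniqueness in the bigger space then lifts back to show the tight sequence has a unique limit, so the whole sequence converges.

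The main obstacle is the middle stage: establishing that $\sigma^N_t$, built from a martingale that a priori lives only in the weakest space $\chfp$, can be promoted to a genuine $\chtp$- and then $\chdp$-valued semimartingale with a \emph{uniform-in-$N$} moment bound in $\ccdp$. This is where all the hypotheses interlock — the growth bounds \eqref{assum:rho:h}/\eqref{assum:rho:c}, the moment assumptions \eqref{assum:2clt:1}--\eqref{assum:2clt:2} on the jump kernels, the uniform operator bounds \eqref{assum:2clt:3i}, and the chain \eqref{eq:normIneqD} with its one compact link — and getting the constants to not blow up with $N$ (the $1/\sqrt N$ correction terms in $J^N_s$, and the fact that the total mass of $\sigma^N_t$ is not a priori bounded) requires the delicate Gronwall-in-Hilbert-space argument of \citet{meleard}, adapted to the general type space $W$ via the abstract spaces $\cci$. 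Everything downstream (tightness, identification, uniqueness) is comparatively routine once this uniform $\ccdp$-estimate and the boundedness of $J_s$ on $\ccz$ are in hand.
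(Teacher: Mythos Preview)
Your outline matches the paper's proof almost step for step: the semimartingale decomposition \eqref{eq:formSigmaN}, the $\chfp$-martingale bound (the paper's Theorem \ref{thm:bdUnifMgN}), promotion to an $\chtp$-semimartingale (Proposition \ref{prop:bochN}), the uniform $\ccdp$-estimate via Gronwall (Theorem \ref{thm:bdUnifSigmaN}), Aldous--Rebolledo tightness through the compact link $\chdp\hookrightarrow\chup$, identification of limit points in $\cczp$ using \eqref{assum:2clt:3ii} to show $J^N_s\to J_s$, and pathwise uniqueness via Gronwall using the $\ccz$-boundedness of $J_s$. Two small corrections: first, $J^N_s=J_{\nu^N_s,\nu_s}$ exactly, with no ``$1/\sqrt{N}$ corrections'' --- the linearization from Assumption \ref{assum:1clt} is exact, which is precisely its point; second, continuity of limit points is not obtained from the same estimates that give tightness, but from a separate jump-size bound (the paper's Step 2) using that at most two agents jump at once and the individual moment estimate \eqref{eq:mom2EtaN}.

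The one place where your argument has a genuine gap is uniqueness. You write ``subtracting two solutions, the Gaussian parts cancel (they have the same covariance\ldots)'', but this only makes sense for \emph{pathwise} uniqueness with a fixed driving process $Z_t$; two solutions that are equal \emph{in distribution} may well be driven by different (identically distributed) Gaussian processes, and you cannot subtract them. The paper does exactly what is needed: Step 6 proves pathwise uniqueness for fixed $Z_t$ using only the $\ccz$-boundedness of $J_s^*$ (not \eqref{assum:2clt:3ii}, which is used earlier in identification), and Step 7 then invokes a Yamada--Watanabe argument to pass from pathwise uniqueness to uniqueness in law. Without that last step, tightness plus ``every limit solves \eqref{eq:clt}'' does not yet give convergence of the full sequence.
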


We will denote by $C^{\varphi_1,\varphi_2}_s$ the sum of the two terms inside the time
integrals above, so
\begin{equation}
  \big[Z_\cdot(\varphi_1),Z_\cdot(\varphi_2)\big]_t=\int_0^t\!C^{\varphi_1,\varphi_2}_s\,ds.\label{eq:defCs}
\end{equation}

\begin{remark}\label{rem:clt}
  \mbox{}
  \begin{enumerate}[label=\arabic*.,topsep=2pt,itemsep=-2pt]
  \item \eqref{eq:clt} implies, in particular, that the solution $\sigma_t$ satisfies
    \begin{equation}\label{eq:cltWeak}\tag{\thesys-w}
      \br{\sigma_t}{\varphi}=\br{\sigma_0}{\varphi}+\int_0^t\!\br{\sigma_s}{J_s\varphi}\,ds+Z_t(\varphi)
    \end{equation}
    simultaneously for every $\varphi\in\ccz$.
  \item Observe that for any $\varphi_1,\dotsc,\varphi_k\in\ccz$, the process
    $\cramped{Z^{\varphi_1,\dotsc,\varphi_k}_t=(Z_t(\varphi_1),\dotsc,Z_t(\varphi_k))}$ is a
    continuous $\rr^k$-valued centered martingale with deterministic quadratic
    covariations, so it can be represented as
    \[Z^{\varphi_1,\dotsc,\varphi_k}_t\stackrel{d}{=}\int_0^t\!\big([C_s]^{\varphi_1,\dotsc,\varphi_k}\big)^{1/2}\,
    dB_s,\] where $\cramped{[C_t]^{\varphi_1,\dotsc,\varphi_k}}$ is the $k\!\times\!k$
    matrix-valued process with entries given by
    $\cramped{[C_t^{\varphi_1,\dotsc,\varphi_k}]_{ij}}=\cramped{C^{\varphi_i,\varphi_j}_t}$, 
    $\cramped{([C_t]^{\varphi_1,\dotsc,\varphi_k})^{1/2}}$ is the square root of this
    matrix, and $B_t$ is a standard $k$-dimensional Brownian motion. Thus, writing
    $\cramped{\left<\sigma_t;\varphi_1,\dotsc,\varphi_k\right>=\big(\!\br{\sigma_t}{\varphi_1},
      \dotsc,\br{\sigma_t}{\varphi_k}\!\big)}$ we have
    \begin{equation}\label{eq:finDim}
      \left<\sigma_t;\varphi_1,\dotsc,\varphi_k\right>\stackrel{d}{=}
      \int_0^t\!\left<\sigma_t;J_s\varphi_1,\dotsc,J_s\varphi_k\right>ds
      +\int_0^t\!\big([C_s]^{\varphi_1,\dotsc,\varphi_k}\big)^{1/2}\,dB_s.
    \end{equation}
  \item The limiting fluctuations $\sigma_t$ have zero mass in the following sense:
    whenever $\uno{}\in\ccz$ and $\br{\sigma_0}{\uno{}}=0$, $\br{\sigma_t}{\uno{}}=0$ for
    all $t\in[0,T]$ almost surely. This follows from \eqref{eq:finDim} simply by observing
    that, in this case, $J_s\uno{}$ and $C_s^{\uno{},\uno{}}$ are both always zero.
  \end{enumerate}
\end{remark}

Before presenting concrete examples where the setting and assumptions of this section
hold, we present a general condition which allows to deduce that the assumptions
\eqref{eq:iniFluc} on the initial distributions $\nu^N_0$, $\nu_0$, and $\sigma^N_0$ hold
(namely, that $\nu^N_0$ is a product measure).

\begin{teo}\label{thm:iniProd}
  In the setting of Theorem \ref{thm:clt}, assume that $\nu^N_0$ is the product of
  $N$ copies of a fixed probability measure $\nu_0\in\cp$ (i.e., $\nu^N_0$ is chosen by
  picking the initial type of each agent independently according to $\nu_0$), and that
  $\cramped{\ee\!\left(\br{\nu_0}{\rho^2_4}\right)<\infty}$.
  Then $\nu^N_0$ converges in distribution in $\cp$ to $\nu_0$, $\sigma^N_0$ converges in
  distribution in $\chup$ to a centered Gaussian $\chup$-valued random variable
  $\sigma_0$, and all the assumptions in \eqref{eq:iniFluc} are satisfied.
\end{teo}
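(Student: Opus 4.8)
The plan is to verify the three assertions of Theorem~\ref{thm:iniProd} in turn, treating the convergence $\nu^N_0\Rightarrow\nu_0$ in $\cp$ as a classical law of large numbers for empirical measures, then upgrading to the Gaussian limit for $\sigma^N_0$ in $\chup$, and finally checking the four moment/convergence conditions in \eqref{eq:iniFluc}. Throughout, write $\nu^N_0=\frac1N\sum_{i=1}^N\delta_{\xi_i}$ with $\xi_1,\dotsc,\xi_N$ i.i.d.\ with law $\nu_0$. For the first claim, since $W$ is Polish, convergence in distribution in $\cp$ can be tested against a countable convergence-determining family of bounded continuous functions; for each such $\varphi$, $\br{\nu^N_0}{\varphi}=\frac1N\sum_i\varphi(\xi_i)\to\br{\nu_0}{\varphi}$ a.s.\ by the strong law of large numbers, and a diagonal argument over the countable family gives $\nu^N_0\to\nu_0$ a.s.\ in $\cp$, hence in distribution.

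For the Gaussian limit of $\sigma^N_0=\sqrt N(\nu^N_0-\nu_0)$ in $\chup$, I would first establish convergence of the finite-dimensional distributions $\br{\sigma^N_0}{\varphi}$ for $\varphi\in\chu$ (or even $\varphi\in\ccz$, which is dense enough for the purpose): here $\br{\sigma^N_0}{\varphi}=\frac{1}{\sqrt N}\sum_{i=1}^N\big(\varphi(\xi_i)-\br{\nu_0}{\varphi}\big)$ is a normalized sum of i.i.d.\ centered terms, so the classical multivariate CLT gives joint convergence of $\big(\br{\sigma^N_0}{\varphi_1},\dotsc,\br{\sigma^N_0}{\varphi_k}\big)$ to a centered Gaussian vector with covariance $\br{\nu_0}{\varphi_i\varphi_j}-\br{\nu_0}{\varphi_i}\br{\nu_0}{\varphi_j}$. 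The finiteness of these covariances is guaranteed by \eqref{assum:rho:h} and \eqref{eq:iniFluc}'s moment hypothesis $\ee\br{\nu_0}{\rho_4^2}<\infty$, since $|\varphi(w)|^2\le C\norm\varphi_\chu^2\rho_1^2(w)\le C'\rho_4^2(w)$. To promote finite-dimensional convergence to convergence in distribution in the Hilbert space $\chup$, I would pass through the compactly embedded space $\chdp$: it suffices to show that $\sup_N\ee\big(\norm{\sigma^N_0}_{\chdp}^2\big)<\infty$, because then the laws of $\sigma^N_0$ are tight in $\chup$ (a bounded-in-$L^2$ family in $\chdp$ is tight in $\chup$ by compactness of $\chdp\hookrightarrow\chup$, exactly the mechanism used in the proof of Theorem~\ref{thm:clt}), and tightness plus convergence of finite-dimensional distributions yields the claimed convergence; the limit $\sigma_0$ is then a centered Gaussian $\chup$-valued random variable, the Gaussianity being inherited from the one-dimensional marginals.

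The key estimate, which also handles the remaining conditions in \eqref{eq:iniFluc}, is the bound $\sup_N\ee\big(\norm{\sigma^N_0}_{\chdp}^2\big)<\infty$ (and similarly with $\ccdp$ in place of $\chdp$, which follows since $\ccdp\hookrightarrow\chdp$ reversed---one needs $\norm\cdot_{\ccdp}\le C\norm\cdot_{\chdp}$, which does hold by \eqref{eq:normIneqD}). I would compute $\ee\big(\norm{\sigma^N_0}_{\chdp}^2\big)$ by choosing an orthonormal basis $(e_j)$ of $\chd$ and writing $\norm{\sigma^N_0}_{\chdp}^2=\sum_j\br{\sigma^N_0}{e_j}^2$; taking expectations and using independence, $\ee\br{\sigma^N_0}{e_j}^2=\operatorname{Var}_{\nu_0}(e_j)\le\br{\nu_0}{e_j^2}$, so $\ee\big(\norm{\sigma^N_0}_{\chdp}^2\big)\le\sum_j\br{\nu_0}{e_j^2}=\br{\nu_0}{\sum_j e_j^2}$. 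The function $w\mapsto\sum_j e_j(w)^2$ is pointwise bounded by $C\rho_2^2(w)$ by \eqref{assum:rho:h} (this is the standard fact that $\sum_j e_j(w)^2$ equals the squared norm in $\chdp$ of the evaluation functional at $w$, which is $\le C\rho_2(w)$), hence bounded by $C'\rho_4^2(w)$ since $\rho_2\le\rho_4$, and so $\ee\big(\norm{\sigma^N_0}_{\chdp}^2\big)\le C'\ee\br{\nu_0}{\rho_4^2}<\infty$, uniformly in $N$. Finally, $\sup_N\ee\br{\nu^N_0}{\rho_4^2}=\ee\br{\nu_0}{\rho_4^2}<\infty$ is immediate since $\ee\br{\nu^N_0}{\rho_4^2}=\ee\br{\nu_0}{\rho_4^2}$ for every $N$, and $\ee\br{\nu_0}{\rho_4^2}<\infty$ is the hypothesis. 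The main obstacle is the justification that $\sum_j e_j(w)^2\le C\rho_2^2(w)$ uniformly in $w$, i.e., that the assumption \eqref{assum:rho:h} on individual functions transfers to a bound on the reproducing-kernel diagonal; this requires identifying $\chdp$ as a space with bounded evaluation functionals and is where the structural hypotheses \eqref{assum:rho:h} do the real work. Everything else reduces to the classical i.i.d.\ CLT and LLN together with the compact-embedding tightness argument already in play for Theorem~\ref{thm:clt}.
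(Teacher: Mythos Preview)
Your overall strategy matches the paper's: a law of large numbers for $\nu^N_0$, a one-dimensional CLT for each $\br{\sigma^N_0}{\varphi}$, and an $L^2$-bound on $\sigma^N_0$ in a dual Hilbert space via an orthonormal basis expansion together with the evaluation-functional bound coming from \eqref{assum:rho:h}. Your identification of $\sum_j e_j(w)^2$ with $\norm{\chi_w}^2$ is correct and is exactly what the paper uses.

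There is, however, a genuine error in how you deduce the $\ccdp$-bound required by \eqref{eq:iniFluc}. You compute $\sup_N\ee\big(\norm{\sigma^N_0}_{\chdp}^2\big)<\infty$ and then claim this yields the corresponding bound in $\ccdp$ because ``$\norm\cdot_{\ccdp}\le C\norm\cdot_{\chdp}$, which does hold by \eqref{eq:normIneqD}''. This is backwards: the chain \eqref{eq:normIneqD} reads $\ccdp\hookrightarrow\chdp$, which gives $\norm\cdot_{\chdp}\le C\norm\cdot_{\ccdp}$, not the reverse. A bound in $\chdp$ therefore does \emph{not} imply one in $\ccdp$.

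The fix is simple and is precisely what the paper does: carry out the orthonormal-basis computation in $\chf$ rather than $\chd$. The same argument (using $\rho_4$ in place of $\rho_2$) gives $\sup_N\ee\big(\norm{\sigma^N_0}_{\chfp}^2\big)\le C\,\ee\!\big(\br{\nu_0}{\rho_4^2}\big)<\infty$, and since $\chfp$ sits at the left end of \eqref{eq:normIneqD}, this bound propagates to $\ccdp$ (and to $\chdp$, which you still need for tightness) in the correct direction. Your $\chdp$ estimate is not wrong, just insufficient; moving the computation one Hilbert space down the chain repairs it.
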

 
\subsection{Application to concrete type spaces}
\label{sec:applTypeSpaces}

In this part we will present conditions under which the assumptions of Theorem \ref{thm:clt}
are satisfied in the four cases discussed at the beginning of this section.

\subsubsection{Finite \texorpdfstring{$W$}{W}}

\addtocounter{teo}{1}

This is the easy case. The reason is that $\cbw$ can be identified with $\rr^{|W|}$, and
thus $\sigma^N_t$ can be regarded as an $\rr^{|W|}$-valued process, so most of the
technical issues disappear. In particular, Theorem \ref{thm:clt} can be proved in this
case by arguments very similar to those leading to Theorem \ref{thm:limit}.

In the abstract setting of Theorem \ref{thm:clt}, it is enough to choose $\rho_i\equiv1$
and $\chii=\cci=\rr^{|W|}\cong\ell^2(W)$ for the right indices $i\in\{0,1,2,3,4\}$ in each
case. \eqref{eq:normIneq} follows simply from the finite-dimensionality of $\rr^{|W|}$ and
the equivalence of all norms in finite dimensions and \eqref{assum:rho:h},
\eqref{assum:rho:c}, and Assumption \ref{assum:2clt} are satisfied trivially.

Theorem \ref{thm:clt} takes a simpler form in this case. Write $W=\{w_1,\dotsc,w_k\}$,
\[\sigma^N_i\!(t)=\sigma^N_t\!(\{w_i\}),\quad\qquad
f_i(\sigma)=\sum_{j=1}^kJ_s\uno{\{w_i\}}(w_j)\,\sigma_j,\quad
\text{and}\quad\qquad g_{ij}(t)=C^{\uno{\{w_i\}},\uno{\{w_j\}}}_t,\]
where $\sigma$ above is in $\rr^k$. Also write
$F(\sigma)=\big(f_1(\sigma),\dotsc,f_k(\sigma)\big)$ and
$G(t)=\big(g_{ij}(t)\big)_{i,j=1\dotsc,k}$. Observe that $G(t)$ is a positive semidefinite
matrix for all $t\geq0$.

\begin{teotype}\label{thm:type:finite}
  In the above context, assume that Assumption \ref{assum:1clt} holds and that
  \[\sqrt{N}\left(\nu^N_0-\nu_0\right)\Longrightarrow\sigma_0
  \qquad\text{and}\qquad
  \sup_{N>0}\ee\!\left(\left|\sqrt{N}\left(\nu^N_0-\nu_0\right)\right|^2\right)<\infty,\]
  where the probability measures $\nu^N_t$ and $\nu_t$ are taken here as elements of
  $[0,1]^k$ and $\sigma_0\in\rr^k$.  Then the sequence of processes $\sigma^N\!(t)$
  converges in distribution in $D([0,T],\rr^k)$ to the unique solution $\sigma(t)$ of the
  following system of stochastic differential equations:
  \begin{equation}\label{eq:finiteClt}\tag{\thesys-f}
    d\sigma(t)=F(\sigma(t))\,dt+G^{1/2}(t)\,dB_t,
  \end{equation}
  where $B_t$ is a standard $k$-dimensional Brownian motion.
\end{teotype}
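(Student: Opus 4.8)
Here is a plan for proving Theorem~\ref{thm:type:finite}.

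The plan is to run the standard three-step programme for functional central limit theorems --- tightness, identification of the limit points via a martingale problem, and uniqueness for the limiting equation --- just as in the proof of Theorem~\ref{thm:limit}; the whole argument stays inside $\rr^k$, so none of the functional-analytic machinery of Theorem~\ref{thm:clt} is needed. Throughout I identify $\nu^N_t$ and $\nu_t$ with their vectors of masses in $[0,1]^k$, so that $\sigma^N(t)=\sqrt N(\nu^N_t-\nu_t)\in\rr^k$, and I write $\sigma^N_i(t)=\br{\sigma^N_t}{\uno{\{w_i\}}}$.

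The first step is the semimartingale decomposition of $\sigma^N$. Applying the generator of $\nu^N_t$ to the coordinate maps $\nu\mapsto\br{\nu}{\uno{\{w_i\}}}$ and using Dynkin's formula --- exactly as in the derivation of \eqref{eq:intDiffSys} --- gives $\br{\nu^N_t}{\uno{\{w_i\}}}=\br{\nu^N_0}{\uno{\{w_i\}}}+\int_0^t b^N_i(\nu^N_s)\,ds+m^N_i(t)$ with $m^N_i$ a martingale, and subtracting \eqref{eq:intDiffSys} and multiplying by $\sqrt N$ yields $\sigma^N_i(t)=\sigma^N_i(0)+A^N_i(t)+M^N_i(t)$ with $M^N_i:=\sqrt N\,m^N_i$. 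Here Assumption~\ref{assum:1clt} is what makes things work: the linearization $\gamma(w,\nu)a(w,\nu,dw')=\int\Gamma(w,z,dw')\,\nu(dz)$ (and its analogue for $\lambda,b$) makes $b^N_i$ a polynomial of degree at most three in the vector $\nu$, so substituting $\nu^N_s=\nu_s+\sigma^N_s/\sqrt N$ and expanding shows that the part of the integrand of $A^N_i$ of degree $\le1$ in $\sigma^N$ is exactly $f_i(\sigma^N(s))=\br{\sigma^N_s}{J_s\uno{\{w_i\}}}$, while the remaining (quadratic and cubic) terms, together with the $O(1/N)$ ``no self-interaction'' corrections arising from replacing $\tfrac1{N^2}\sum_{i\neq j}$ by an integral against $\nu^N\!\ootimes\nu^N$, form a remainder with $\sup_{t\le T}|R^N_i(t)|\le C N^{-1/2}\int_0^T\!\bigl(1+|\sigma^N(s)|^2\bigr)\,ds$ --- here one uses only the uniform bounds $\ogamma,\olambda$ on the rates, $\|\uno{\{w_i\}}\|_\infty=1$, and the crude pointwise bound $|\sigma^N(s)|\le 2\sqrt N$ coming from the fact that $\nu^N_s,\nu_s$ are probability measures (this bound turns the cubic terms into at-most-quadratic ones). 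The same computation of the bracket process gives $\langle M^N_i,M^N_j\rangle_t=\int_0^t G^N_{ij}(\nu^N_s)\,ds$, where $G^N_{ij}$ is the function on $\cp$ obtained from the defining expression for $g_{ij}$ by putting the running measure in place of $\nu_s$ (up to another $O(1/N)$ self-interaction correction), and the jumps of $M^N$ are bounded by $2/\sqrt N$; boundedness of the rates makes the $G^N_{ij}$ uniformly bounded.

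Next come the a priori bound and tightness. Applying Dynkin's formula to $|\sigma^N(t)|^2$ --- equivalently, using $d(\sigma^N_i)^2=2\sigma^N_i\,dA^N_i+2\sigma^N_i\,dM^N_i+d[M^N_i]$, taking expectations, and bounding $2\sum_i\sigma^N_i(s)\,\dot A^N_i(s)\le C\bigl(1+|\sigma^N(s)|^2\bigr)$ (using the remainder estimate above and, again, $|\sigma^N|\le2\sqrt N$) together with $\sum_i d[M^N_i]_s\le C\,ds$ --- Gronwall's lemma gives $\ee\bigl(|\sigma^N(t)|^2\bigr)\le(\ee|\sigma^N(0)|^2+CT)e^{CT}$, which is bounded uniformly in $N$ thanks to $\sup_N\ee|\sigma^N(0)|^2<\infty$; feeding this back into the decomposition, together with Doob's inequality for $M^N$, upgrades it to $\sup_N\ee\bigl(\sup_{t\le T}|\sigma^N(t)|^2\bigr)<\infty$, and in particular $R^N\to0$ uniformly in $L^1$. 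Tightness of $\{\sigma^N\}$ in $D([0,T],\rr^k)$ then follows from the Aldous--Rebolledo criterion: the increments of $A^N$ over $[s,s+\delta]$ are bounded in $L^1$ by $C\,\ee\!\int_s^{s+\delta}\!\bigl(1+|\sigma^N(u)|\bigr)\,du\le C\delta$, the increments of $\langle M^N\rangle$ by $C\delta$, and the maximal jump of $\sigma^N$ is $O(N^{-1/2})\to0$.

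Finally, identification and uniqueness. Along any subsequence with $\sigma^N\Longrightarrow\sigma$ in $D([0,T],\rr^k)$: the vanishing of the jumps forces $\sigma\in C([0,T],\rr^k)$, and $\sigma(0)\stackrel{d}{=}\sigma_0$. Since Theorem~\ref{thm:limit} produces the \emph{deterministic} limit $\nu_t$, $\nu^N\to\nu$ uniformly on $[0,T]$ in probability; combined with the uniform convergence $G^N_{ij}\to G_{ij}$ (as functions on $\cp$) and continuity of $g_{ij}$ in the masses, this gives $\int_0^\cdot G^N_{ij}(\nu^N_s)\,ds\to\int_0^\cdot G_{ij}(s)\,ds$. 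Passing to the limit in the martingale relations for $M^N_i$ and for $M^N_iM^N_j-\langle M^N_i,M^N_j\rangle$ --- legitimate by the uniform integrability supplied by the moment bound and by $R^N\to0$ --- shows that $\mathcal{N}_t:=\sigma_t-\sigma_0-\int_0^t F(\sigma(s))\,ds$ (where $F$ is understood to depend on $s$ through $\nu_s$) is a continuous centered $\rr^k$-valued martingale with $\langle\mathcal{N}^i,\mathcal{N}^j\rangle_t=\int_0^t G_{ij}(s)\,ds$. By the representation theorem for continuous martingales (on a possibly enlarged probability space), $\mathcal{N}_t=\int_0^t G^{1/2}(s)\,dB_s$ for a standard $k$-dimensional Brownian motion $B$, so $\sigma$ is a weak solution of \eqref{eq:finiteClt}. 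Uniqueness in law is then immediate: $t\mapsto\nu_t$ is continuous, hence so are $t\mapsto G(t)$ and its symmetric positive semidefinite square root $G^{1/2}(t)$, so \eqref{eq:finiteClt} is a linear SDE with affine (in particular Lipschitz) drift and deterministic, purely time-dependent diffusion coefficient; such an equation has a unique strong --- a fortiori weak --- solution for the prescribed initial law $\sigma_0$, given explicitly by variation of constants. Hence the whole sequence $\sigma^N$ converges, which is the assertion. The step that genuinely demands care is the second paragraph --- the combinatorial bookkeeping that isolates the linear term $f_i$ and, especially, the verification that the remainder $R^N$ is small enough ($O(N^{-1/2})$, once the probability-measure bound $|\sigma^N|\le2\sqrt N$ is invoked) for the Gronwall estimate to close --- while everything downstream is routine.
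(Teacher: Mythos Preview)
Your proof is correct, but it takes a different route from the paper. The paper does not give an independent proof of Theorem~\ref{thm:type:finite}: it simply observes that when $W$ is finite one may take $\rho_i\equiv1$ and $\chii=\cci=\rr^{|W|}\cong\ell^2(W)$ for all $i$, so that the chain of embeddings \eqref{eq:normIneq} and conditions \eqref{assum:rho:h}, \eqref{assum:rho:c}, \eqref{assum:2clt:1}--\eqref{assum:2clt:3} are all trivially satisfied, and then invokes the general Theorem~\ref{thm:clt} (together with the finite-dimensional reformulation \eqref{eq:finDim}). Your argument instead reproves the result from scratch in $\rr^k$, running the tightness/identification/uniqueness programme directly without any of the Hilbert-space machinery. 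This is exactly the more elementary approach the paper alludes to at the start of Section~4.3.1 (``Theorem~\ref{thm:clt} can be proved in this case by arguments very similar to those leading to Theorem~\ref{thm:limit}''), but does not carry out. What you gain is a self-contained proof that does not rely on the heavy apparatus of Section~\ref{sec:proofClt}; what the paper's approach buys is economy, since once Theorem~\ref{thm:clt} is proved the finite case falls out in one line. One minor remark: in the paper's generator \eqref{eq:genN} the double integral is over $\nu^N\!\ootimes\nu^N$ with no exclusion of the diagonal, so the ``no self-interaction'' corrections you mention are in fact absent here; this only makes your remainder estimate cleaner.
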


\begin{ex}
  This example provides a very simple model of agents changing their opinions on some
  issue of common interest, with rates of change depending on the ``popularity'' of each
  alternative. These opinions will be represented by $W=\{-m,\dotsc,m\}$ ($m$ can be
  thought of as being the strongest agreement with some idea, $0$ as being neutral, and
  $-m$ as being the strongest disagreement with it). Alternatively, one could think of the
  model as describing the locations of the agents, who move according to the density of
  agents at each site.

  The agents move in two ways. First, each agent feels attracted to other positions
  proportionally to the fraction of agents occupying them. Concretely, we assume that an
  agent at position $i$ goes to position $j$ at rate $\beta q_{i,j}\nu^N_t(\{j\})$, where
  $Q=(q_{i,j})_{i,j\in W}$ is the transition matrix of a Markov chain on
  $W$. One interpretation of these rates is that each agent decides to try to
  change its position at rate $\beta$, chooses a possible new position $j$ according to
  $Q$, and then changes its position with probability $\nu_t(\{j\})$ and stays put with
  probability $1-\nu_t(\{j\})$.  Second, each agent leaves its position at a rate
  proportional to the fraction of agents at its own position. We assume then that, in
  addition to the previous rates, each agent at position $i$ goes to position $j$ at rate
  $\alpha p_{i,j}\nu^N_t(\{i\})$, where $P=(p_{i,j})_{i,j\in W}$ is defined analogously to
  $Q$. This can be thought of as the agent leaving its position $i$ due to
  ``overcrowding'' at rate $\alpha\nu_t(\{i\})$ and choosing a new position according to
  $P$. We assume for simplicity that $p_{i,i}=q_{i,i}=0$ for all $i\in W$.

  We will set up the rates using the notation of Assumption \ref{assum:1clt}:
  \[\Gamma(i,k,\{j\})=
  \begin{cases}
    \alpha p_{i,j} & \text{if $k=i$}\\
    \beta q_{i,j} & \text{if $k=j$}\\
    0 & \text{otherwise}
  \end{cases}\qquad\text{and}\qquad
  \Lambda\equiv0.\]

  Assume that $\nu^N_0$ converges in distribution to some $\nu_0\in\cp$, let $\nu_t$ be
  the limit given by Theorem \ref{thm:limit} and write $u_t(i)=\nu_t(\{i\})$. It is easy
  to check that $u_t$ satisfies
  \[\frac{du_t(i)}{dt}=\alpha\sum_{j=-m}^mp_{j,i}u_t(j)^2-\alpha u_t(i)^2
  +\beta\sum_{j=-m}^m\big[q_{j,i}-q_{i,j}\big]u_t(i)u_t(j).\]

  Now let $\sigma_t$ be the limit in distribution of the fluctuations process
  $\sqrt{N}\big(u^N_t-u_t\big)$, and assume that the initial distributions $\nu^N_0$ and
  $\nu_0$ satisfy the assumptions of Theorem \ref{thm:type:finite}. It easy to check as
  before that
  \begin{equation}
    F_i(\sigma_t)=2\alpha\sum_{j=-m}^mp_{j,i}u_t(j)\sigma_t(j)-2\alpha u_t(i)\sigma_t(i)
    +\beta\sum_{j=-m}^m\big[q_{j,i}-q_{i,j}\big]\big(u_t(i)\sigma_t(j)+u_t(j)\sigma_t(i)\big).
  \end{equation}
  Thus, after computing the quadratic covariations we obtain the following: if $\star$
  denotes the coordinate-wise product in $\rr^{|W|}$ (i.e., $u\star v(i)=u(i)v(i)$) then
  the limiting fluctuations process $\sigma_t$ solves
  \begin{multline*}
    \hspace{-.1in}d\sigma_t=2\alpha P^\mathrm{t}(u_t\!\star\!\sigma_t)\,dt-2\alpha u_t\!\star\!\sigma_t\,dt
    +\beta\Big(\big[Q^\mathrm{t}-Q\big]\sigma_t\Big)\!\star\!u_t\,dt\\
    +\beta\Big(\big[Q^\mathrm{t}-Q\big]u_t\Big)\!\star\!\sigma_t\,dt +\sqrt{G(t)}\,dB_t,
  \end{multline*}
  where $B_t$ is a $(2m+1)$-dimensional standard Brownian motion and $G(t)$ is
  given by
  \[G_{i,j}(t)=\begin{dcases*}
    \alpha\sum_{k\neq i}p_{k,i}u_t(k)^2+\alpha u_t(i)^2 +\beta\sum_{k\neq
      i}\big(q_{k,i}+q_{i,k}\big)u_t(i)u_t(k) & if $i=j$\\
    -\alpha\big(p_{j,i}u_t(j)^2+p_{i,j}u_t(i)^2\big)-\beta\big(q_{j,i}+q_{i,j}\big)u_t(i)u_t(j)
    & if $i\neq j$.
  \end{dcases*}\]
\end{ex}

\subsubsection{\texorpdfstring{$W=\zz^d$}{W=Z\textcircumflex d}}\label{sec:typeZzd}

In this case $\cbw$ is no longer finite-dimensional and, moreover, the type space is not
compact, so we will need to make use of the weighting functions $\rho_i$. We let
$D=\lfloor d/2\rfloor+1$ and take
\[\rho_i(x)=\sqrt{1+|x|^{2iD}}.\]
Clearly, we have in this case that $\rho_1^p\leq C\rho_4$ for $C=p=2$.

Consider the following spaces:
\begin{gather}
  \ccz=\ell^\infty(\zz^d)= \left\{\varphi\!:\zz^d\rightarrow\rr\,\text{ such that
    }\norm\varphi_\infty<\infty\right\},\\
  \cci=\ell^{\infty,iD}(\zz^d)= \left\{\varphi\!:\zz^d\rightarrow\rr\,\text{ such that
    }\norm\varphi_{\infty,iD}=\sup_{x\in\zz^d}\frac{\left|\varphi(x)\right|}{1+|x|^{iD}}<\infty\right\}
  \quad\text{($i=2,3$)},\\
  \chii=\ell^{2,iD}(\zz^d)=\bigg\{\varphi\!:\zz^d\rightarrow\rr\,\text{ such that }
    \norm\varphi^2_{2,iD}=\sum_{x\in\zz^d}\frac{|\varphi(x)|^2}{1+|x|^{2iD}}<\infty\bigg\}
  \quad\text{($i=1,2,3,4$)},
\end{gather}
endowed with the norms defined within these definitions (we observe that $\rho_i$ does
not appear explicitly in the definition of the spaces $\cci$, but the definition does not
change if we replace the weighting function $1+|x|^{iD}$ appearing there by
$\rho_i$). These spaces are easily checked to be Banach (the $\cci$) and Hilbert
(the $\chii$) as required. With these definitions we have the following continuous
embeddings:
\begin{equation}\label{eq:embZzd}
  \begin{split}
    \ell^\infty(\zz^d)\xhookrightarrow[\quad]{}\ell^{2,D}(\zz^d)\xhookrightarrow[\text{c}]{\quad}
    \ell^{2,2D}(\zz^d)&\xhookrightarrow[\quad]{}\ell^{\infty,2D}(\zz^d)\xhookrightarrow[\quad]{}
    \ell^{2,3D}(\zz^d)\\
    &\xhookrightarrow[\quad]{}\ell^{\infty,3D}(\zz^d)\xhookrightarrow[\quad]{}
    \ell^{2,4D}(\zz^d)
  \end{split}
\end{equation}
(these embeddings will be proved in the proof of Theorem \ref{thm:type:zzd}).

To obtain \eqref{assum:2clt:1} and \eqref{assum:2clt:2} we will need to assume now that
\begin{subequations}
  \begin{gather}
    \sum_{y\in\zz^d}|y|^{8D}\Gamma(x,z,\{y\})\leq
    C\left(1+|x|^{8D}+|z|^{8D}\right)\qquad\text{and}\label{eq:mom:zzd:1}\\
    \qquad\sum_{y_1,y_2\in\zz^d}\left(|y_1|^{8D}+|y_2|^{8D}\right)\Lambda(x_1,x_2,z,\{(y_1,y_2)\})\leq
    C\left(1+|x_1|^{8D}+|x_2|^{8D}+|z|^{8D}\right)\label{eq:mom:zzd:2}
  \end{gather}
\end{subequations}
for all $x_1,x_2,z\in\zz^d$ (the other six inequalities in \eqref{assum:2clt:1} and
\eqref{assum:2clt:2} follow from these two and Jensen's inequality). We remark that in
\citet{meleard} the author also needs to assume moments of order $8D$ for the jump rates
($8D+2$ in her case, see $(\text{H}'_1)$ in her paper).

\begin{teotype}\label{thm:type:zzd}
  In the above context, suppose that Assumption \ref{assum:1clt} holds
  and that \eqref{eq:iniFluc}, \eqref{eq:mom:zzd:1}, and \eqref{eq:mom:zzd:2} hold. Then
  the conclusion of Theorem \ref{thm:clt} is valid, i.e., $\sigma^N_t$ converges in
  distribution in $D([0,T],\ell^{-2,D}(\zz^d))$ (where $\ell^{-2,D}(\zz^d)$ is the dual of
  $\ell^{2,D}(\zz^d)$) to the unique solution $\sigma_t$ of the
  ($\ell^\infty(\zz^d)^\prime$-valued) system given in \eqref{eq:clt}.
\end{teotype}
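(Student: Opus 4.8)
The plan is to verify that the concrete spaces $\ell^\infty(\zz^d)$, $\ell^{2,iD}(\zz^d)$, $\ell^{\infty,iD}(\zz^d)$ together with the weighting functions $\rho_i(x)=\sqrt{1+|x|^{2iD}}$ satisfy all the abstract hypotheses of Theorem \ref{thm:clt}, so that the general result applies verbatim. Thus there are four things to check: \eqref{eq:normIneq} (the chain of embeddings, one of them compact), \eqref{assum:rho:h} and \eqref{assum:rho:c} (the pointwise growth bounds), the mild requirement $\rho_1^p\le C\rho_4$ (already noted with $p=2$, $C=2$), and Assumption \ref{assum:2clt}. The remaining hypotheses of Theorem \ref{thm:clt}, namely Assumption \ref{assum:1clt} and \eqref{eq:iniFluc}, are assumed directly in the statement, so nothing is needed for them.

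First I would establish the embeddings in \eqref{eq:embZzd}. The continuous (non-compact) embeddings are elementary: $\ell^\infty\hookrightarrow\ell^{2,D}$ holds because $\sum_x(1+|x|^{2D})^{-1}<\infty$ when $2D=2(\lfloor d/2\rfloor+1)>d$, and this is precisely why $D$ was chosen that way; $\ell^{2,jD}\hookrightarrow\ell^{\infty,jD}$ holds since $|\varphi(x)|^2/(1+|x|^{2jD})\le\norm\varphi_{2,jD}^2$ termwise; and $\ell^{\infty,jD}\hookrightarrow\ell^{2,(j+1)D}$ holds because $\sum_x(1+|x|^{2jD})/(1+|x|^{2(j+1)D})\le C\sum_x(1+|x|^{2D})^{-1}<\infty$. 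The one compact embedding, $\ell^{2,D}\hookrightarrow\ell^{2,2D}$, I would prove by a standard diagonal/truncation argument: given a bounded sequence in $\ell^{2,D}$, extract a pointwise-convergent subsequence (possible since each coordinate is bounded), and then use the uniform tail bound coming from the extra weight $|x|^{2D}$ (the ratio of the two weights goes to zero) to upgrade pointwise convergence to norm convergence in $\ell^{2,2D}$. The pointwise bounds \eqref{assum:rho:h}, \eqref{assum:rho:c} are immediate from the definitions of the norms, since e.g.\ $|\varphi(x)|\le\norm\varphi_{2,iD}\sqrt{1+|x|^{2iD}}=\norm\varphi_{2,iD}\,\rho_i(x)$.

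The substantive step is Assumption \ref{assum:2clt}. Parts \eqref{assum:2clt:1} and \eqref{assum:2clt:2} reduce, for $i=4$, exactly to the assumed moment bounds \eqref{eq:mom:zzd:1} and \eqref{eq:mom:zzd:2} (note $\rho_4^2(x)=1+|x|^{8D}$), and for $i=0,1,2,3$ they follow from these by Jensen's inequality applied to the probability (or finite, after normalizing by the total mass $\le\ogamma$, $\le\olambda$) measures $\Gamma(x,z,\cdot)$, $\Lambda(x_1,x_2,z,\cdot)$, using that $t\mapsto t^{i/4}$ is concave on $[0,\infty)$ for $i\le4$. For \eqref{assum:2clt:3}, I would bound, for $\varphi\in\cci$, the quantities $\Gamma\varphi(x;z)$ and $\Lambda\varphi(x_1,x_2;z)$ using $|\varphi(y)|\le\norm\varphi_\cci\,\rho_i(y)$ and then the moment bounds \eqref{eq:mom:zzd:1}–\eqref{eq:mom:zzd:2} (together with $\rho_i\le\rho_4$ and the normalization by total mass) to get $|\Gamma\varphi(x;z)|\le C\norm\varphi_\cci(\rho_i(x)+\rho_i(z))$ and similarly for $\Lambda\varphi$; plugging into the definition of $J_{\mu_1,\mu_2}\varphi(z)$ and using $\br{\mu_j}{\rho_i}\le\br{\mu_j}{\rho_4}<\infty$ (controlled uniformly via the moment hypothesis in \eqref{eq:iniFluc}), one sees that $|J_{\mu_1,\mu_2}\varphi(z)|\le C\norm\varphi_\cci\,\rho_i(z)$, i.e.\ $J_{\mu_1,\mu_2}$ is bounded on $\cci$ with norm uniform in $\mu_1,\mu_2$, giving \eqref{assum:2clt:3i}. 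For \eqref{assum:2clt:3ii}, since $J_{\mu_1,\mu_2}\varphi$ depends linearly on the pair $(\mu_1,\mu_2)$ (here is where Assumption \ref{assum:1clt}'s linearization is essential), the difference $(J_{\mu_1,\mu_2}-J_{\mu_3,\mu_4})\varphi$ is $J$ evaluated at the signed measures $\mu_1-\mu_3$ and $\mu_2-\mu_4$, and the same estimate as above — now with $|\Gamma\varphi(x;z)|\le C\norm\varphi_\ccz\rho_0(z)=C\norm\varphi_\ccz$ since $\rho_0\equiv1$ and $\ccz=\ell^\infty$ — gives the bound by $C\norm\varphi_\ccz(\norm{\mu_1-\mu_3}_{\ccdp}+\norm{\mu_2-\mu_4}_{\ccdp})$, using that the relevant test functions ($\Gamma\varphi(\cdot;z)$, etc.) lie in $\ccd=\ell^{\infty,2D}$ with controlled norm.

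The main obstacle I expect is the bookkeeping in \eqref{assum:2clt:3}: one must carefully track which weight $\rho_i$ and which dual norm ($\ccdp$ versus the operator norm on $\cci$) is needed at each of the five terms in the definition of $J_{\mu_1,\mu_2}$, and check that the test functions produced by applying $\Gamma$ and $\Lambda$ genuinely land in the right Banach space $\cci$ (not just are pointwise bounded) — this is where the choice of polynomial weights with the specific gaps $iD$, and the order-$8D$ moment assumption, are exactly what is needed. Once Assumption \ref{assum:2clt} is verified, the conclusion is a direct invocation of Theorem \ref{thm:clt} with $\chu=\ell^{2,D}(\zz^d)$, whose dual is $\ell^{-2,D}(\zz^d)$, and $\ccz=\ell^\infty(\zz^d)$.
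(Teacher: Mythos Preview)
Your plan matches the paper's proof: verify the embedding chain, the pointwise bounds \eqref{assum:rho:h}--\eqref{assum:rho:c}, and Assumption \ref{assum:2clt}, then invoke Theorem \ref{thm:clt}. Two minor differences are worth flagging. For the compact embedding $\ell^{2,D}\hookrightarrow\ell^{2,2D}$, the paper observes that the orthonormal basis $e_y(x)=\sqrt{1+|x|^{2D}}\,\uno{x=y}$ of $\ell^{2,D}$ satisfies $\sum_y\|e_y\|^2_{2,2D}=\sum_y(1+|y|^{2D})/(1+|y|^{4D})<\infty$, so the inclusion is Hilbert--Schmidt and hence compact --- slicker than your diagonal/truncation argument, though both are valid. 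More substantively, your claim that $J_{\mu_1,\mu_2}\varphi$ depends \emph{linearly} on $(\mu_1,\mu_2)$ is not correct as stated: the $\Lambda$-terms in the definition of $J_{\mu_1,\mu_2}$ involve $\mu_1\otimes\mu_1$, $\mu_1\otimes\mu_2$, and $\mu_2\otimes\mu_2$, so the dependence is quadratic. The paper sidesteps this by checking \eqref{assum:2clt:3} only under the simplification $\Lambda\equiv0$ (where your linearity does hold); in the general case you would telescope, e.g.\ $\mu_1\otimes\mu_1-\mu_3\otimes\mu_3=(\mu_1-\mu_3)\otimes\mu_1+\mu_3\otimes(\mu_1-\mu_3)$, after which your estimate goes through unchanged.
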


We recall that the dual of $\ell^\infty(\zz^d)$ can be identified with the space of
finitely additive measures on $\zz^d$, and thus every $\xi\in\ell^\infty(\zz^d)^\prime$
can be represented as $\big(\xi(x)\big)_{x\in\zz^d}$ and we can write
\[\br{\xi}{\varphi}=\sum_{x\in\zz^d}\varphi(x)\xi(x)\]
for $\varphi\in\ell^\infty(\zz^d)$. Therefore, \eqref{eq:clt} can be expressed in this
case in a manner analogous to \eqref{eq:finiteClt}.

\begin{ex}\label{ex:F-V}
  Here we consider a well-known model in mathematical biology, the Fleming-Viot process,
  which was originally introduced in \citet{flViot} as a stochastic model in population
  genetics with a constant number of individuals which keeps track of the positions of the
  individuals. We will actually consider the version of this model studied in
  \citet{ferMar}.

  We take as a type space $W=\zz^+$ and consider an infinite matrix $Q=(q(i,j))_{i,j\in
    W\cup\{0\}}$ corresponding to the transition rates of a conservative continuous-time
  Markov process on $W\cup\{0\}$, for which 0 is an absorbing state (observe that, in
  particular, $\cramped{q(i,i)=-\sum_{j\neq i}q(i,j)}$). Each individual moves
  independently according to $Q$, until it gets absorbed at $0$. On absorption, it chooses
  an individual uniformly from the population and jumps (instantaneously) to its position.
  We assume that the exit rates from each site are uniformly bounded, i.e.,
  $\cramped{\sup_{i\geq1}\sum_{j\in(W\cup\{0\})\setminus\{i\}}q(i,j)<\infty}$ (this is so
  that \eqref{assum:1:bd} is satisfied). The rates take the following form:
  \[\Gamma(i,k,\{j\})=
  \begin{cases}
    q(i,j) & \text{if $k\neq j$ and $i\neq j$}\\
    q(i,j)+q(i,0) & \text{if $k=j$ and $i\neq j$}\\
    0 & \text{if $i=j$}
  \end{cases}\qquad\text{and}\qquad
  \Lambda\equiv0.\] Observe that with this definition, the total rate at which a particle
  at $i$ jumps to $j$ when the whole population is at state $\nu$ is given by
  $q(i,j)+q(i,0)\nu(\{j\})$.

  We will write $u^N_t(i)=\nu^N_t(\{i\})$. It is clear that this model satisfies the
  assumptions of Theorem \ref{thm:limit}. Therefore, if the initial distributions $u^N_0$
  converge, and we denote by $u_t$ the limit given by Theorem \ref{thm:limit}, we obtain
  that for each $i\geq1$,
  \[\frac{du_t(\{i\})}{dt}=\sum_{j\geq1}\big[q(i,j)+q(i,0)u_t(j)\big]u_t(i).\]
  This limit was obtained in Theorem 1.2 of \citet{ferMar} (though there the convergence
  is proved for each fixed $t$).

  To study the fluctuations process we need to add the following moment assumption on $Q$:
  \[\sum_{j\geq1}j^8q(i,j)\leq C(1+i^8)\]
  for some $C>0$ independent of $i$. With this, if \eqref{eq:iniFluc} holds, we can apply
  Theorem \ref{thm:clt}. By the remark following Theorem \ref{thm:type:zzd}, to express
  the limiting system for the fluctuations process it is enough to apply
  \eqref{eq:cltWeak} to functions of the form $\varphi=\uno{i}$ for each $i\geq1$. Doing
  this, and after some algebraic manipulations, we deduce that the limiting fluctuations
  process $\sigma_t$ is the unique process with paths in
  $C([0,T],\ell^{\infty}(\zz^+)^\prime)$ satisfying the following stochastic differential
  equation:
  \[d\sigma_t=Q^\mathrm{t}\sigma_t\,dt+\left(\sum_{k\geq1}Q(k,0)\sigma_t(k)\right)\!u_t\,dt
  +\left(\sum_{k\geq1}Q(k,0)u_t(k)\right)\!\sigma_t\,dt+\sqrt{V_t}\,dB_t,\] where $B_t$ is
  an infinite vector of independent standard Brownian motions and $V_t$ is given by
  \[V_t(i,j)=\begin{dcases*}
    \sum_{k\neq
      i}\big[q(k,i)+q(k,0)u_t(i)\big]u_t(k)-\big[q(i,i)-q(i,0)\big]u_t(i)+q(i,0)u_t(i)^2
    & if $i=j$,\\
    -q(i,j)u_t(i)-q(j,i)u_t(j)-\big[q(i,0)+q(j,0)\big]u_t(i)u_t(j) & if $i\neq j$.
  \end{dcases*}\]
\end{ex}

\subsubsection{\texorpdfstring{$W=\Omega$}{W=\textbackslash Omega}, a compact,
  sufficiently smooth subset of \texorpdfstring{$\rr^d$}{R\textcircumflex d}}

Unlike the last case, the type space $W$ is now compact, so we can simply take
$\rho_i\equiv1$. Nevertheless, $W$ is not a discrete set now, and this leads us to use
Sobolev spaces for our sequence of continuous embeddings:
\begin{equation}\label{eq:embOmega}
  \cc^{3D}(\Omega)\xhookrightarrow[\quad]{}H^{3D}(\Omega)\xhookrightarrow[\text{c}]{\quad}
  H^{2D}(\Omega)\xhookrightarrow[\quad]{}\cc^D(\Omega)\xhookrightarrow[\quad]{}
  H^D(\Omega)\xhookrightarrow[\quad]{}\cc(\Omega)\xhookrightarrow[\quad]{}
  L^2(\Omega)
\end{equation}
(with $D=\lfloor d/2\rfloor+1$ as before), where $\cc^{k}(\Omega)$ is the space
of continuous functions on $\Omega$ with $k$ continuous derivatives, endowed with the norm
\[\norm\varphi_{\cc^k(\Omega)}=\sum_{|\alpha|\leq
  k}\sup_{x\in\Omega}\left|\partial^\alpha\varphi(x)\right|,\] and $H^k(\Omega)$ is the
Sobolev space (with respect to the $L^2(\Omega)$ norm) of order $k$, i.e., the space of
functions on $\Omega$ with $k$ weak derivatives in $L^2(\Omega)$, endowed with the norm
\[\norm\varphi^2_{H^{k}(\Omega)}=\sum_{|\alpha|\leq
  k}\int_\Omega\!\left|\partial^\alpha\varphi(x)\right|^2dx.\] The above embeddings are
either direct or are consequences of the usual Sobolev embedding theorems, see Theorem
4.12 of \citet{adams}. For these to
hold we need $\Omega$ to be sufficiently smooth (a locally Lipschitz boundary is
enough). The compact embedding $H^{2D}(\Omega)\hookrightarrow H^D(\Omega)$ is a
consequence of the Rellich--Kondrakov Theorem (see Theorem 6.3 of \citet{adams}).

In this case \eqref{assum:2clt:1} and \eqref{assum:2clt:2} hold trivially.
\eqref{assum:2clt:3} is much more delicate, and we will just leave it stated as it
is. (The assumptions (H3), (H3)${}^\prime$, and (H3)$^{\prime\prime}$ of \citet{meleard}
give some particular conditions which, if translated to our setting, would assure that
\eqref{assum:2clt:3} holds. These conditions are suitable in her setting but they
unfortunately rule out some interesting examples for us).

\begin{teotype}\label{thm:type:omega}
  In the above context, assume that Assumption and \ref{assum:1clt} holds,
  and that \eqref{assum:2clt:3} and \eqref{eq:iniFluc} hold. Then the conclusion of
  Theorem \ref{thm:clt} is valid, i.e., $\sigma^N_t$ converges in distribution in
  $D([0,T],H^{-3D}(\Omega))$ (where $H^{-3D}(\Omega)$ is the dual of $H^{3D}(\Omega)$) to
  the unique solution $\sigma_t$ of the ($\cc^{3D}(\Omega)^\prime$-valued) system given in
  \eqref{eq:clt}.
\end{teotype}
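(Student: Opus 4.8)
The plan is to verify that the concrete spaces and weighting functions in \eqref{eq:embOmega} satisfy all the abstract hypotheses of Theorem~\ref{thm:clt}, so that the general result applies verbatim. Concretely, I must check: (i) the chain of continuous embeddings \eqref{eq:normIneq} with $\ccz=\cc^{3D}(\Omega)$, $\chu=H^{3D}(\Omega)$, $\chd=H^{2D}(\Omega)$, $\ccd=\cc^D(\Omega)$, $\cht=H^D(\Omega)$, $\cct=\cc(\Omega)$, $\chf=L^2(\Omega)$, with the compact embedding occurring at $H^{3D}(\Omega)\xhookrightarrow{\text{c}}H^{2D}(\Omega)$; (ii) the pointwise bounds \eqref{assum:rho:h} and \eqref{assum:rho:c} with all $\rho_i\equiv1$; and (iii) Assumption~\ref{assum:2clt}, whose parts \eqref{assum:2clt:1} and \eqref{assum:2clt:2} are trivial since $\rho_i\equiv1$ makes both sides constants (with $C=1$ essentially, using that the total masses of $\Gamma$ and $\Lambda$ are bounded), and whose part \eqref{assum:2clt:3} is assumed as a hypothesis. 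Then the conclusion of Theorem~\ref{thm:clt} gives convergence in $D([0,T],\chup)=D([0,T],H^{-3D}(\Omega))$ to the unique $\cczp$-valued solution, which is exactly the claimed statement.

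For step (i), the embeddings $\cc^{3D}(\Omega)\hookrightarrow H^{3D}(\Omega)$, $H^D(\Omega)\hookrightarrow\cc(\Omega)$ wait — actually $H^D(\Omega)\hookrightarrow\cc(\Omega)$ requires $D>d/2$, which holds since $D=\lfloor d/2\rfloor+1$; similarly $H^{2D}(\Omega)\hookrightarrow\cc^D(\Omega)$ follows from the Sobolev embedding $H^{2D}\hookrightarrow\cc^{D}$, valid because $2D-D=D>d/2$. The inclusions $H^{3D}(\Omega)\hookrightarrow H^{2D}(\Omega)$, $\cc^D(\Omega)\hookrightarrow H^D(\Omega)$, $\cc(\Omega)\hookrightarrow L^2(\Omega)$ are elementary (for the last two using that $\Omega$ has finite Lebesgue measure, being bounded). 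For all of these one needs $\Omega$ to be sufficiently smooth — a locally Lipschitz boundary suffices for the Sobolev embedding theorem, Theorem~4.12 of \citet{adams}. The compactness of $H^{3D}(\Omega)\hookrightarrow H^{2D}(\Omega)$ follows from the Rellich--Kondrachov theorem, Theorem~6.3 of \citet{adams}. For step (ii): since $\rho_i\equiv1$, \eqref{assum:rho:h} reads $|\varphi(w)|\le C\|\varphi\|_{\chii}$, which is immediate for $\chii=\cc(\Omega),\cc^D(\Omega)$ (sup norm) and for $\chii=H^{2D}(\Omega),H^{3D}(\Omega)$ by the already-established embeddings $\chii\hookrightarrow\cc(\Omega)$; similarly \eqref{assum:rho:c} is the statement $\|\cdot\|_\infty\le C\|\cdot\|_{\cci}$, which holds by definition for the $\cc^k$ norms.

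The main obstacle is that Assumption~\eqref{assum:2clt:3} — boundedness of the operator $J_{\mu_1,\mu_2}$ on $\cc(\Omega)$, $\cc^D(\Omega)$, $\cc^{3D}(\Omega)$ uniformly in $\mu_1,\mu_2$, plus the Lipschitz-type estimate \eqref{assum:2clt:3ii} — is genuinely delicate: it requires the jump kernels $\Gamma,\Lambda$ to act nicely on spaces of differentiable functions, which is a real regularity condition on the rates (differentiation must commute suitably with integration against $\Gamma(w,z,dw')$ and $\Lambda(w_1,w_2,z,\cdot)$, and the resulting operator must remain bounded in $\cc^{D}$ and $\cc^{3D}$ norms). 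As the discussion preceding the theorem notes, the conditions (H3), (H3)$'$, (H3)$''$ of \citet{meleard} would imply this but are too restrictive for some examples of interest, so we simply carry \eqref{assum:2clt:3} as an explicit hypothesis rather than deriving it from more primitive conditions. With that hypothesis granted, all the assumptions of Theorem~\ref{thm:clt} are in place and the conclusion follows directly; the only remaining point is to note that $\chup=H^{3D}(\Omega)'=H^{-3D}(\Omega)$ and $\cczp=\cc^{3D}(\Omega)'$, so the convergence and the equation \eqref{eq:clt} are stated in exactly the spaces claimed.
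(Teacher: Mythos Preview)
Your approach is exactly the paper's: the paper simply states (just before the proof of Theorem~\ref{thm:type:zzd}) that for Theorems~\ref{thm:type:finite}, \ref{thm:type:omega}, and \ref{thm:type:rrd} ``we already explained why the assumptions of Theorem~\ref{thm:clt} hold, so the results follow directly from that theorem,'' and your write-up just fills in the details of that verification.

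One point worth flagging: in your check of \eqref{assum:rho:h} you list $\cc(\Omega)$, $\cc^D(\Omega)$, $H^{2D}(\Omega)$, $H^{3D}(\Omega)$, but the condition is for the Hilbert spaces $\chii$, $i=1,2,3,4$, which here are $H^{3D}$, $H^{2D}$, $H^D$, and $L^2$. The case $\cht=H^D(\Omega)$ follows by the same Sobolev embedding $H^D\hookrightarrow\cc$, but for $\chf=L^2(\Omega)$ the bound $|\varphi(w)|\le C\|\varphi\|_{L^2}$ fails (point evaluation is not continuous on $L^2$). The paper glosses over this as well, so this is a shared gap rather than a divergence from the paper's argument.
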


\subsubsection{\texorpdfstring{$W=\rr^d$}{W=R\textcircumflex d}}\label{sec:type:rrd}

This case combines both of the difficulties encountered before: $W$ is neither discrete
nor compact. To get around these problems we need to use now weighted Sobolev spaces. The
weighting functions $\rho_i$ are given by
\[\rho_i(x)=\sqrt{1+|x|^{2iD+2q}},\] where $D=\lfloor d/2\rfloor+1$ and $q\in\nn$ (to be chosen).
We consider now the spaces $\cc^{j,k}$ of continuous functions $\varphi$ with continuous
partial derivatives up to order $j$ and such that
$\lim_{|x|\rightarrow\infty}\left|\partial^\alpha\varphi(x)\right|/(1+|x|^k)=0$ for all
$|\alpha|\leq j$, with the norms
\[\norm\varphi_{\cc^{j,k}}=\sum_{|\alpha|\leq
  j}\sup_{x\in\rr^d}\frac{\left|\partial^\alpha\!\varphi(x)\right|} {1+|x|^k},\] (as in
Section \ref{sec:typeZzd}, the weigthing functions $\rho_i$ do not appear explicitly here,
but the definition does not change if we replace the term $1+|x|^k$ by
$\sqrt{1+|x|^{2k}}$) and the weighted Sobolev spaces $W^{j,k}_0$ (with respect to the
$L^2$ norm) defined as follows: we define the norms
\[\norm\varphi^2_{W^{j,k}_0}=\sum_{|\alpha|\leq
  j}\int_{\rr^d}\frac{\left|\partial^\alpha\!\varphi(x)\right|^2} {1+|x|^{2k}}\,dx\]
and
let $W^{j,k}_0$ be the closure in $L^2$ under this norm of the space of functions of class
$\cc^\infty$ with compact support.

The right sequence of embeddings is now the following:
\begin{equation}\label{eq:embRrd}
  \cc^{3D,q}\xhookrightarrow[\quad]{}W^{3D,D+q}_0\xhookrightarrow[\text{c}]{\quad}
  W^{2D,2D+q}_0\xhookrightarrow[\quad]{}C^{D,2D+q}\xhookrightarrow[\quad]{}
  W^{D,3D+q}_0
  \xhookrightarrow[\quad]{}\cc^{0,3D+q}\xhookrightarrow[\quad]{}W^{0,4D+q}_0.
\end{equation}
$q\in\nn$ can be chosen depending on the specific example being analyzed: $q=0$ works for
many examples, but as we will see in the next example, choosing a positive $q$ ($q=1$ in
that case) can help, for instance, by making all constant functions be in
$\cc^{3D,q}$. These embeddings are, as before, either straightforward or consequences of
the usual Sobolev embedding theorems (adapted now to the weighted case; see
\citet{meleard}, where the author uses the same type of embeddings, and see \citet{kufner} for
a general discussion of weighted Sobolev spaces).

To obtain \eqref{assum:2clt:1} and \eqref{assum:2clt:2} we need to add the following
moment assumptions on the rates, analogous to those we used in Theorem \ref{thm:type:zzd}:
for all $x,x_1,x_2,z\in\rr^d$,
\begin{subequations}
  \begin{gather}
    \int_{\rr^d}\!|y|^{8D+2q}\,\Gamma(x,z,dy)\leq C\left(1+|x|^{8D+2q}+|z|^{8D+2q}\right)
    \qquad\text{and}\label{eq:mom:rrd:1}\\
  \begin{multlined}
    \int_{\rr^d\!\times\!\rr^d}\!\left(|y_1|^{8D+2q}+|y_2|^{8D+2q}\right)\,\Lambda(x_1,x_2,z,dy_1\ootimes
    dy_2)\\
    \hspace{1.4in}\leq C\left(1+|x_1|^{8D+2q}+|x_2|^{8D+2q}+|z|^{8D+2q}\right).\label{eq:mom:rrd:2}
  \end{multlined}
\end{gather}
\end{subequations}
We observe that the power $8D+2q$ appearing in this assumption corresponds exactly, when
$q=1$, to the moments of order $8D+2$ assumed in $(\text{H}'_1)$ in \citet{meleard}.
\eqref{assum:2clt:3}, as in the previous case, is much more involved, so we will again
leave it stated as it is.

\begin{teotype}\label{thm:type:rrd}
  In the above context, assume moreover that Assumption \ref{assum:1clt} holds, and that
  \eqref{eq:iniFluc}, \eqref{assum:2clt:3}, \eqref{eq:mom:rrd:1}, and \eqref{eq:mom:rrd:2}
  hold. Then the conclusion of Theorem \ref{thm:clt} is valid, i.e., $\sigma^N_t$
  converges in distribution in $D([0,T],W^{-3D,D+q}_0)$ (where $W^{-3D,D+q}_0$ is the dual
  of $W^{3D,D+q}_0$) to the unique solution $\sigma_t$ of the
  ($(\cc^{3D,q})^\prime$-valued) system given in \eqref{eq:clt}.
\end{teotype}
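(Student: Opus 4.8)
Theorem \ref{thm:type:rrd} is a direct application of Theorem \ref{thm:clt}, so the plan is simply to verify, for the concrete choices made above (the weighting functions $\rho_i(x)=\sqrt{1+|x|^{2iD+2q}}$, the weighted Sobolev spaces $\chii$, and the weighted continuity spaces $\cci$), each of the structural hypotheses of that theorem, and then to invoke it. What must be checked is: (i) the embedding chain \eqref{eq:embRrd}, in the role of \eqref{eq:normIneq}, with its second arrow compact; (ii) the pointwise growth bounds \eqref{assum:rho:h} and \eqref{assum:rho:c}; (iii) the mild condition $\rho_1^p\le C\rho_4$; and (iv) the parts \eqref{assum:2clt:1} and \eqref{assum:2clt:2} of Assumption \ref{assum:2clt}. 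Assumption \ref{assum:1clt}, the delicate condition \eqref{assum:2clt:3}, and the initial-data conditions \eqref{eq:iniFluc} are hypotheses of the theorem, so nothing is required for them.

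First I would establish \eqref{eq:embRrd}. The three embeddings of the form $\cc^{j,k}\hookrightarrow W^{j,k+D}_0$ are immediate: for $\varphi\in\cc^{j,k}$ one has $\norm\varphi^2_{W^{j,k+D}_0}\le C_{j,d}\,\norm\varphi^2_{\cc^{j,k}}\int_{\rr^d}(1+|x|^{k})^2(1+|x|^{2(k+D)})^{-1}\,dx$, the integral converging because $2D>d$ (recall $D=\lfloor d/2\rfloor+1$), while a cutoff argument places $\cc^{j,k}$ inside the $L^2$-closure defining $W^{j,k+D}_0$. The two embeddings of the form $W^{j,m}_0\hookrightarrow\cc^{j-D,m}$ are the weight-preserving weighted version of the Sobolev embedding $H^s\hookrightarrow\cc^{s-D}$ ($s>d/2$), valid here since $D>d/2$; I would quote the weighted Sobolev embedding theorems used in \citet{meleard} (see also \citet{kufner}). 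The only substantive step is the compact embedding $W^{3D,D+q}_0\hookrightarrow W^{2D,2D+q}_0$, where one both loses $D$ derivatives and gains $D$ in the weight exponent. Given a bounded sequence $(\varphi_n)$ in $W^{3D,D+q}_0$, on each ball $B_R$ it is bounded in $H^{3D}(B_R)$, hence precompact in $H^{2D}(B_R)$ by the Rellich--Kondrakov theorem (Theorem 6.3 of \citet{adams}); a diagonal extraction over $R\to\infty$ gives a subsequence Cauchy in $H^{2D}_{\mathrm{loc}}$. The tails are uniformly small since for $|x|>R$ one has $(1+|x|^{2(2D+q)})^{-1}\le R^{-2D}(1+|x|^{2(D+q)})^{-1}$, whence $\int_{|x|>R}|\partial^\alpha\varphi_n|^2(1+|x|^{2(2D+q)})^{-1}\,dx\le R^{-2D}\norm{\varphi_n}^2_{W^{3D,D+q}_0}$ uniformly in $n$; combining, the subsequence converges in $W^{2D,2D+q}_0$. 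Dualizing then yields \eqref{eq:normIneqD}, in particular the compact embedding $W^{-2D,2D+q}_0\hookrightarrow W^{-3D,D+q}_0$ needed for tightness and $W^{-3D,D+q}_0\hookrightarrow(\cc^{3D,q})^\prime$ needed to state the limiting equation.

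Next I would verify \eqref{assum:rho:h} and \eqref{assum:rho:c}. For $\cci$ with $i=0,2,3$, which are of the form $\cc^{j_i,k_i}$ with $k_i=iD+q$, the inclusion $\cc^{j_i,k_i}\hookrightarrow\cc^{0,k_i}$ and the definition of the norm give $|\varphi(x)|\le\norm\varphi_{\cc^{0,k_i}}(1+|x|^{k_i})\le\sqrt2\,\norm\varphi_{\cci}\,\rho_i(x)$, using $(1+|x|^{k_i})^2\le2\,\rho_i^2(x)$. For $\chii$ with $i=1,2,3$, of the form $W^{j_i,m_i}_0$ with $j_i\ge D$ and $m_i=iD+q$, one first applies the weight-preserving weighted Sobolev embedding $W^{j_i,m_i}_0\hookrightarrow\cc^{0,m_i}$ (of the same type as those in \eqref{eq:embRrd}) and then argues identically. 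For $i=4$, $\chf=W^{0,4D+q}_0$ is not a space of pointwise-defined functions; but, exactly as in \citet{meleard}, it serves only as the auxiliary Hilbert space hosting the martingale part via an orthonormal-basis estimate, and the growth control actually needed there is supplied through the continuous space $\cct=\cc^{0,3D+q}\hookrightarrow\chf$, already treated. The condition $\rho_1^p\le C\rho_4$ holds with $p=C=2$: $\rho_1^2=1+|x|^{2D+2q}\le2\sqrt{1+|x|^{8D+2q}}=2\rho_4$ whenever $q\le2D$, which we assume (as in the corresponding step for Theorem \ref{thm:type:zzd}). Finally, for Assumption \ref{assum:2clt}: the case $i=4$ of \eqref{assum:2clt:1} and \eqref{assum:2clt:2} is exactly \eqref{eq:mom:rrd:1} and \eqref{eq:mom:rrd:2} (the additive constant being absorbed using that the total masses of $\Gamma(w,z,\cdot)$ and $\Lambda(w_1,w_2,z,\cdot)$ are bounded by $\ogamma$ and $\olambda$), and the cases $i=0,1,2,3$, where $2iD+2q\le8D+2q$, follow from these by Jensen's inequality applied to the normalized measures together with subadditivity of $t\mapsto t^\theta$ for $\theta\le1$. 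Having verified all hypotheses, Theorem \ref{thm:clt} gives the conclusion.

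The main obstacle is the compact embedding $W^{3D,D+q}_0\hookrightarrow W^{2D,2D+q}_0$ and, more broadly, the bookkeeping that matches the weighted Sobolev and Rellich--Kondrakov estimates to the precise weight exponents appearing in \eqref{eq:embRrd} — it is exactly this constraint that dictates both the gap of $D$ between consecutive weight exponents and the particular form $\rho_i(x)=\sqrt{1+|x|^{2iD+2q}}$, and, implicitly, the moment orders $8D+2q$ in \eqref{eq:mom:rrd:1} and \eqref{eq:mom:rrd:2}. I stress that the genuinely hard \emph{analytic} input, \eqref{assum:2clt:3} (boundedness of the drift operator $J_{\mu_1,\mu_2}$ on $\cc^{3D,q}$ together with its Lipschitz-type estimate, used for uniqueness of \eqref{eq:clt}), is here left as a hypothesis rather than verified, for the same reason mentioned after \eqref{eq:embOmega}: translating the sufficient conditions of \citet{meleard} to the present setting would exclude some of the examples of interest.
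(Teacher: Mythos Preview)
Your proposal is correct and follows exactly the approach the paper takes: for Theorem \ref{thm:type:rrd} the paper gives no separate proof, stating only that ``we already explained why the assumptions of Theorem \ref{thm:clt} hold, so the results follow directly from that theorem,'' and deferring the weighted Sobolev embeddings (including the compact one) to \citet{meleard} and \citet{kufner}. You supply considerably more detail than the paper does---in particular your Rellich--Kondrakov plus diagonal-extraction argument for the compact embedding, your explicit verification of \eqref{assum:rho:c}, and your observation about \eqref{assum:rho:h} for $\chf=W^{0,4D+q}_0$---but the strategy is identical.
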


\begin{ex}[Continuation of Example \ref{ex:infPerc}]
  In the previous section we obtained the system \eqref{eq:infPerc} that characterizes the
  information percolation model of \citet{dufMan} by using \eqref{eq:diffSys}. If we use
  \eqref{eq:intDiffSys} instead we obtain
  \[\frac{d}{dt}\br{\nu_t}{\varphi}
  =2\lambda\br{\nu_t}{\nu_t\!\ast\!\varphi}
  -2\lambda\br{\nu_t}{\varphi}\] for all $\varphi\in{\cal
    B}(\rr)$, where
  $(\nu_s\!\ast\!\varphi)(z)=\int_W\!\varphi(x+z)\,\nu_s(dx)$.
  
  To obtain the fluctuations limit, we need to check the assumptions of Theorem
  \ref{thm:type:rrd}. As we mentioned, we will take $q=1$. Assumption \ref{assum:1clt}
  holds trivially because $\lambda(w_1,w_2,\nu)$ and $b(w_1,w_2,\nu,\cdot)$ do not depend
  on $\nu$. We will assume that the initial distribution of the system satisfies
  \eqref{eq:iniFluc}. \eqref{eq:mom:rrd:1} and \eqref{eq:mom:rrd:2} are straightforward to
  check in this case.

  We are left checking \eqref{assum:2clt:3}. Let $\varphi\in\cc^{3,1}$ and take
  $\mu_1,\mu_2,\mu_3,\mu_4\in\cp$ having moments of order 10. We have that
  \begin{multline}
    J_{\mu_1,\mu_2}\varphi(z)=\int_{-\infty}^\infty\!\int_{-\infty}^\infty\!
    \big(2\varphi(w_1+w_2)-\varphi(w_1)-\varphi(w_2)\big)\,\mu_1(dw_2)\,\mu_1(dw_1)\\
    +\int_{-\infty}^\infty\!\big(2\varphi(w+z)-\varphi(w)-\varphi(z)\big)\,
    \big[\mu_1(dw)+\mu_2(dw)\big].
  \end{multline}
  The first term on the right side is constant in $z$, so it is in $\cc^{3,1}$ (this is
  why we needed $q=1$ in this example). For the second term, since $|\varphi(x)|\leq
  C\norm\varphi_{\cc^{3,1}}(1+|x|)$ and $\br{\mu_i}{1+|\cdot|^{10}}<\infty$, the integral is
  bounded, and hence the derivatives with respect to $z$ can be taken inside the integral,
  whence we get that this term is also in $\cc^{3,1}$. The same argument can be
  repeated for $\cc^{1,3}$ and $\cc^{0,4}$. The fact that the norm of this operator in
  these spaces is bounded uniformly in $\mu_1,\mu_2$ follows from the same
  argument. This gives \eqref{assum:2clt:3i}. Using the same formula it is easy to
  show that
  \[\norm{\big(J_{\mu_1,\mu_2}-J_{\mu_3,\mu_4}\big)\varphi}_{\cc^{3,1}}
  \leq C\norm\varphi_{\cc^{3,1}}\!\Big[\norm{\mu_1-\mu_3}_{(\cc^{3,1})^\prime}
  +\norm{\mu_2-\mu_4}_{(\cc^{3,1})^\prime}\Big],\] which is stronger than
  \eqref{assum:2clt:3ii}.

  We have checked all the assumptions of Theorem \ref{thm:type:rrd}, so we deduce that the
  fluctuations process $\sigma^N_t$ converges in distribution in $W^{-3,2}_0$ to the unique
  solution of \eqref{eq:clt} (which is an equation in $(\cc^{3,1})^\prime$). Writing down
  the formula for $J_s$ in this case yields
  \[\br{\sigma_s}{J_s\varphi}=4\lambda\br{\sigma_s}{\nu_s\!\ast\!\varphi}-2\lambda\br{\sigma_s}{\varphi}\]
  for every $\varphi\in\cc^{3,1}$. For the quadratic covariations we get
  \[C^{\varphi_1,\varphi_2}_s =4\lambda\br{\nu_s}{\nu_s\!\ast\!(\varphi_1\varphi_2)}
  -6\lambda\br{\nu_s}{\varphi_1}\br{\nu_s}{\varphi_2}
  +2\lambda\br{\nu_s}{\varphi_1\varphi_2}\] for every $\varphi_1,\varphi_2\in\cc^{3,1}$.
  Therefore the limiting fluctuations satisfy
  \[\br{\sigma_t}{\varphi}
  =\br{\sigma_0}{\varphi}
  +\lambda\int_0^t\!\left[4\br{\sigma_s}{\nu_s\!\ast\!\varphi}-2\br{\sigma_s}{\varphi}\right]ds
  +Z_t(\varphi),\] with $Z_t$ being a centered Gaussian process taking values in the dual
  of $\cc^{3,1}$ with quadratic covariations given by $[Z(\varphi_1),Z(\varphi_2)]_t=\int_0^tC^{\varphi_1,\varphi_2}_s\,ds$
  for each $\varphi_1,\varphi_2\in\cc^{3,1}$.
\end{ex}

\section{Proofs of the Results}\label{sec:proofs}

Throughout this section, $C$, $C_1$, and $C_2$ will denote constants whose values might
change from line to line.

\subsection{Preliminary computations and proof of Theorem \ref{thm:limit}}

Since $\nu^N_t$ is a jump process in $\cp$ with bounded jump rates, its generator is given
by
\begin{equation}\label{eq:genN}
  \begin{aligned}
    \Omega_N\!f(\nu)&=N\int_W\!\gamma(w,\nu)\int_W\!\Delta_Nf(\nu;w;w')\,a(w,\nu,dw')\,\nu(dw)\\
    &\hspace{0.3in}+N\int_W\!\int_W\!\lambda(w_1,w_2,\nu)\int_{W\!\times
      W}\Delta_Nf(\nu;w_1,w_2;w_1'w_2')\\
    &\hspace{2in}\cdot b(w_1,w_2,\nu,dw_1\ootimes dw_2'))\,\nu(dw_1)\,\nu(dw_2)
  \end{aligned}
\end{equation}
for any bounded measurable function $f$ on $\cp$, where
$\Delta_Nf(\nu;w;w')=f\big(\nu+N^{-1}(\delta_{w'}-\delta_w)\big)-f(\nu)$ and
$\Delta_Nf(\nu;w_1,w_2;w_1',w_2')=f\big(\nu+N^{-1}(\delta_{w_1'}+\delta_{w_2'}-\delta_{w_1}-\delta_{w_2})\big)-f(\nu)$.

Given $\varphi\in\mw$ we get by using \eqref{eq:genN} and Proposition IV.1.7 of \cite{ethKur} for
$f(\nu)=\br{\nu}{\varphi}$ that
\begin{equation}
  \begin{split}
    \br{\nu^N_t}{\varphi}&=\br{\nu^N_0}{\varphi}+M^{N,\varphi}_t
    +\int_0^t\!\int_W\!\gamma(w,\nu^N_s)\int_W\!\dif\,a(w,\nu^N_s,dw')\,\nu^N_s(dw)\,ds\\
    &\hspace{0.35in}+\int_0^t\!\int_W\!\int_W\!\lambda(w_1,w_2,\nu^N_s)\int_{W\times
      W}\!\difd\\
    &\hspace{2in}\cdot
    b(w_1,w_2,\nu^N_s,dw_1'\ootimes dw_2)\,\nu^N_s(dw_2)\,\nu^N_s(dw_1)\,ds,\\
  \end{split}\label{eq:intForm}
\end{equation}
where $M^{N,\varphi}_t$ is a martingale starting at 0.  This formula is the key to the
proof of Theorem \ref{thm:limit} because, ignoring the martingale term, this equation has
the exact form we need for obtaining \eqref{eq:intDiffSys}, and thus the idea will be to show that
$M^{N,\varphi}_t$ vanishes in the limit as $N\to\infty$. This follows from the fact that
the quadratic variation of $M^{N,\varphi}_t$ is of order $O(1/N)$. More precisely, we have
the following formula: for any $\varphi_1,\varphi_2\in\mw$, the predictable quadratic
covariation between the martingales $M^{N,\varphi_1}_t$ and $M^{N,\varphi_2}_t$ is given
by
  \begin{equation}\label{eq:formCoQuadN}
    \begin{aligned}
      \Big<M^{N,\varphi_1}&,M^{N,\varphi_2}\Big>_t
      =\frac{1}{N}\int_0^t\!\int_W\!\gamma(w,\nu^N_s)\int_W\!(\varphi_1(w')-\varphi_1(w))(\varphi_2(w')-\varphi_2(w))\\
      &\hspace{2.8in}\cdot a(w,\nu^N_s,dw')\,\nu^N_s(dw)\,ds\\
      &\hspace{0in}+\!\frac{1}{N}\int_0^t\int_W\!\int_W\!\lambda(w_1,w_2,\nu^N_s)\int_{W\!\times\!W}\!
      (\varphi_1(w_1')+\varphi_1(w_2')-\varphi_1(w_1)-\varphi_1(w_2))\\
      &\hspace{2.1in}\cdot(\varphi_2(w_1')+\varphi_2(w_2')-\varphi_2(w_1)-\varphi_2(w_2))\\
      &\hspace{1.7in}\cdot b(w_1,w_2,\nu^N_s,dw_1'\ootimes
      dw_2')\,\nu^N_s(dw_2)\,\nu^N_s(dw_1)\,ds.
    \end{aligned}
  \end{equation}
  The proof of this formula is almost the same as that of Proposition 3.4 of \citet{fourMel}
  so we will omit it (there the computation is done for $\varphi_1=\varphi_2$, but the
  generalization is straightforward, and can also be obtained by polarization).
  
\begin{proof}[Proof of Theorem \ref{thm:limit}]
  The proof is relatively standard, and its basic idea is the following. First one proves that
  the sequence of processes $\br{\nu^N_t}{\varphi}$ is tight in $D([0,T],\rr)$ for each
  $\varphi\in\cbw$, which in turn implies the tightness of $\nu^N_t$ in $D([0,T],\cp)$.
  The tightness of these processes follows from standard techniques and uses
  \eqref{eq:intForm} and \eqref{eq:formCoQuadN}. Next, one uses a martingale argument and
  \eqref{eq:formCoQuadN} to show that any limit point of $\nu^N_t$ satisfies
  \eqref{eq:intDiffSys}. Finally, using Gronwall's Lemma one deduces that
  \eqref{eq:intDiffSys} has a unique solution. We refer the reader to the proof of Theorem
  5.3 of \citet{fourMel} for the details.
\end{proof}

\begin{proof}[Proof of Corollary \ref{cor:absCont}]
  Denote by $(\tau^N_i)_{i>0}$ the sequence of stopping times corresponding to the jumps
  of the process $\nu^N_t$. Let $A$ be any Borel subset of $W$ with $\mu(A)=0$ and let
  $\varphi$ be any positive function in $\mw$ whose support is contained in $A$. By
  \eqref{eq:intForm}, for every $t\in[0,T]$ we have that
  \begin{equation}\label{eq:expBr}
    \begin{aligned}
      \ee&\left(\br{\nu^N_{t\wedge\tau_1^N}}{\varphi}\right)=\ee\left(\br{\nu_0}{\varphi}\right)+\ee\left(M^{N,\varphi}_{t\wedge\tau_1^N}\right)\\
      &\hspace{0.3in}+\ee\left(\int_0^{t\wedge\tau_1^N}\!\int_W\!\gamma(w,\nu^N_s)\int_W\!\dif\,a(w,\nu^N_s,dw')\,\nu^N_s(dw)\,ds\right)\\
      &\hspace{0.3in}+\ee\Bigg(\int_0^{t\wedge\tau_1^N}\!\int_W\!\int_W\!\lambda(w_1,w_2,\nu^N_s)\int_{W\!\times\!W}\!\difd\\
      &\hspace{1.9in}\cdot b(w_1,w_2,\nu^N_s,dw_1'\ootimes
      dw_2'))\,\nu^N_s(dw_2)\,\nu^N_s(dw_1)\,ds\Bigg).
    \end{aligned}
  \end{equation}
  The first term on the right side of \eqref{eq:expBr} is 0 because the support of
  $\varphi$ is contained $A$ and $\nu_0(A)=0$. The second term is 0 by Doob's Optional Sampling
  Theorem. For the third term observe that for $s<\tau^N_1$, $\nu^N_s=\nu_0$, so
  \begin{multline*}
    \ee\!\left(\left|\int_0^{t\wedge\tau_1^N}\!\int_W\!\gamma(w,\nu^N_s)\int_W\!\dif
        \,a(w,\nu^N_s,dw')\,\nu^N_s(dw)\,ds\right|\right)\\
    \leq\ogamma\ee\!\left(\int_0^{t\wedge\tau_1^N}\!\int_W\!\int_W\!|\varphi(w')-\varphi(w)|
      \,a(w,\nu_0,dw')\,\nu_0(dw)\right)
  \end{multline*}
  which is 0 since $\varphi$ is supported inside $A$ and the measure
  $\int_W\!a(w',\nu_0,\cdot)\,\nu_0(dw')$ is absolutely continuous with respect to
  $\mu$. The fourth term is 0 by analogous reasons. We deduce that the expectation on the
  left side of \eqref{eq:expBr} is 0, and therefore, since $\varphi$ is positive,
  $\big\langle\nu^N_{t\wedge\tau_1^N},\varphi\big\rangle=0$ with probability 1. In particular,
  $\nu^N_{t\wedge\tau_1^N}$ is absolutely continuous with respect to $\mu$ for all
  $t\in[0,T]$ with probability 1.

  Using the strong Markov property we obtain inductively that
  $\big\langle\nu^N_{t\wedge\tau_i^N},\varphi\big\rangle=0$ almost surely for every $i>0$ and
  $t\in[0,T]$. Since the jump rates of the process are bounded, there are finitely many
  jumps before $T$ with probability 1, and we deduce that $\br{\nu^N_t}{\varphi}=0$ almost
  surely for all $t\in[0,T]$.  Now if $\nu_t$ is the limit in distribution of the sequence
  $\nu^N_t$ given by Theorem \ref{thm:limit} and $\varphi\in\cbw$,
  $\ee\!\left(\br{\nu^N_t}{\varphi}\right)\to\br{\nu_t}{\varphi}$ as $N\to\infty$,
  so $\br{\nu_t}{\varphi}=0$ for all $t\in[0,T]$ whenever $\varphi$ is supported inside
  $A$, and the result follows.
\end{proof}

\subsection{Proof of Theorem \ref{thm:clt}}\label{sec:proofClt}

We will assume throughout this part that all the assumptions of Theorem \ref{thm:clt}
hold. For simplicity, we will also assume that $\Gamma\equiv0$ (these terms are easier to
handle and are in fact a particular case of the ones corresponding to $\Lambda$).

Before getting started we recall that, by Parseval's identity, given any $A\in\chip$ and a
complete orthonormal basis $(\phi_k)_{k\geq0}$ of $\chii$,
\[\|A\|^2_\chip=\sum_{k\geq0}|A(\phi_k)|^2.\] We will use this fact several times below.

\subsubsection{Moment estimates for \texorpdfstring{$\nu^N_t$ and $\nu_t$}{\textbackslash
    nu\textcircumflex N\textunderscore t and \textbackslash
    nu\textunderscore t}}

Recall that we have assumed that
$\sup_{N>0}\ee\!\left(\br{\nu^N_0}{\rho^2_4}\right)<\infty$ and
$\ee\!\left(\br{\nu_0}{\rho^2_4}\right)<\infty$. We need to show that these moment
assumptions propagate to $\nu^N_t$ and $\nu_t$:

\begin{prop}\label{prop:bdMoments}
  The following properties hold:
  \begin{subequations}
    \begin{gather}
      \sup_{N>0}\ee\!\left(\sup_{t\in[0,T]}\br{\nu^N_t}{\rho^2_4}\right)<\infty,\quad\text{and}\label{eq:mom2}\\
      \sup_{t\in[0,T]}\br{\nu_t}{\rho^2_4}<\infty.\label{eq:mom1}
    \end{gather}
  \end{subequations}
\end{prop}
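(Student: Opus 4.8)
The plan is to derive a Gronwall-type estimate for $\ee(\br{\nu^N_t}{\rho_4^2})$ directly from the semimartingale identity \eqref{eq:intForm} applied to the test function $\varphi = \rho_4^2$. Strictly speaking $\rho_4^2$ need not be bounded, so I would first work with the truncated functions $\rho_4^2 \wedge K$ (which are in $\mw$) and then pass to the limit $K \to \infty$ by monotone convergence; alternatively, one can run the argument up to the stopping times $\tau^N_R = \inf\{t : \br{\nu^N_t}{\rho_4^2} \geq R\}$ and send $R \to \infty$ at the end, which also legitimizes dropping the martingale term via optional sampling. Either way, the point is that \eqref{eq:intForm} with $\varphi = \rho_4^2$ gives
\[
\br{\nu^N_t}{\rho_4^2} = \br{\nu^N_0}{\rho_4^2} + M^{N,\rho_4^2}_t + \int_0^t (\text{drift})\,ds,
\]
and the drift is controlled by Assumption \ref{assum:1clt} together with \eqref{assum:2clt:1}--\eqref{assum:2clt:2}. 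Indeed, since $\gamma(w,\nu)a(w,\nu,dw') = \int_W \Gamma(w,z,dw')\,\nu(dw)$ (and similarly for $\Lambda$), the first drift term equals
\[
\int_0^t\!\int_W\!\int_W\!\int_W\!\dif\,\Gamma(w,z,dw')\,\nu^N_s(dz)\,\nu^N_s(dw)\,ds,
\]
and after bounding $|\rho_4^2(w') - \rho_4^2(w)| \leq \rho_4^2(w') + \rho_4^2(w)$ and using $\int_W \rho_4^2(w')\,\Gamma(w,z,dw') < C(\rho_4^2(w) + \rho_4^2(z))$ from \eqref{assum:2clt:1} plus $\Gamma(w,z,W) \leq \ogamma$, this term is bounded by $C\int_0^t (\br{\nu^N_s}{\rho_4^2} + \br{\nu^N_s}{\rho_4^2}) \br{\nu^N_s}{1}\,ds = C\int_0^t \br{\nu^N_s}{\rho_4^2}\,ds$ (recall $\nu^N_s$ is a probability measure, so $\br{\nu^N_s}{1} = 1$). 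The $\Lambda$-term is handled identically using \eqref{assum:2clt:2}, producing a bound of the same form $C\int_0^t \br{\nu^N_s}{\rho_4^2}\,ds$.

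Taking expectations, the martingale term drops out (legitimate after truncation/localization), and one obtains
\[
\ee\!\left(\br{\nu^N_t}{\rho_4^2}\right) \leq \ee\!\left(\br{\nu^N_0}{\rho_4^2}\right) + C\int_0^t \ee\!\left(\br{\nu^N_s}{\rho_4^2}\right) ds.
\]
Since $\sup_N \ee(\br{\nu^N_0}{\rho_4^2}) < \infty$ by hypothesis, Gronwall's inequality gives $\sup_N \sup_{t \leq T} \ee(\br{\nu^N_t}{\rho_4^2}) \leq C_T < \infty$. To upgrade this to the estimate \eqref{eq:mom2} with the supremum \emph{inside} the expectation, I would go back to the pre-expectation identity, take $\sup_{t \leq T}$ of both sides, and bound $\ee(\sup_{t\leq T}|M^{N,\rho_4^2}_t|)$ via the Burkholder--Davis--Gundy inequality applied to $\langle M^{N,\rho_4^2}\rangle_T$; using \eqref{eq:formCoQuadN} together with the same moment bounds from \eqref{assum:2clt:2}, the predictable quadratic variation is $\leq \frac{C}{N}\int_0^T (1 + \br{\nu^N_s}{\rho_4^2} + \cdots)\,ds$, whose expectation is bounded by $C_T/N$ thanks to the estimate just obtained. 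Hence $\ee(\sup_{t\leq T}|M^{N,\rho_4^2}_t|) \to 0$ and is in particular uniformly bounded, and combining everything yields \eqref{eq:mom2}. Finally, \eqref{eq:mom1} follows by the same Gronwall argument applied directly to the deterministic identity \eqref{eq:intDiffSys} with $\varphi = \rho_4^2$ (again via truncation), starting from the finite quantity $\br{\nu_0}{\rho_4^2} \leq \ee(\br{\nu_0}{\rho_4^2}) < \infty$; no martingale term is present, so this case is strictly easier. Alternatively, \eqref{eq:mom1} can be deduced from \eqref{eq:mom2} using that $\nu^N_t \Rightarrow \nu_t$ and lower semicontinuity of $\nu \mapsto \br{\nu}{\rho_4^2}$ under weak convergence (Fatou), combined with a uniform integrability argument.

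The main obstacle is purely technical rather than conceptual: justifying the use of the unbounded test function $\rho_4^2$ in \eqref{eq:intForm} and \eqref{eq:intDiffSys}, and ensuring the localization/truncation limit is clean (in particular that the localizing stopping times $\tau^N_R \to \infty$, which itself needs an a priori finiteness of $\br{\nu^N_t}{\rho_4^2}$ — available here because $\nu^N_t$ is a finite sum of $N$ Dirac masses, so $\br{\nu^N_t}{\rho_4^2} < \infty$ pathwise automatically). Once one commits to, say, the localization route, each individual estimate is a routine application of \eqref{assum:2clt:1}--\eqref{assum:2clt:2}, BDG, and Gronwall, so the proof should be short.
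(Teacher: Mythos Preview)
Your overall plan (Gronwall via the moment assumptions \eqref{assum:2clt:1}--\eqref{assum:2clt:2}) is right, and your treatment of \eqref{eq:mom1} is essentially what the paper does. But the upgrade from $\sup_t \ee(\cdot)$ to $\ee(\sup_t \cdot)$ via BDG has a real gap. The predictable quadratic variation \eqref{eq:formCoQuadN} applied to $\varphi=\rho_4^2$ contains the integrand
\[
\big(\rho_4^2(w_1')+\rho_4^2(w_2')-\rho_4^2(w_1)-\rho_4^2(w_2)\big)^2,
\]
whose control against $\Lambda$ would require a bound on $\int_{W\times W}\rho_4^4(w_i')\,\Lambda(w_1,w_2,z,dw_1'\ootimes dw_2')$. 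Assumptions \eqref{assum:2clt:1}--\eqref{assum:2clt:2} only bound integrals of $\rho_i^2$ for $i\le 4$, not $\rho_4^4$, so your claimed estimate $\langle M^{N,\rho_4^2}\rangle_T\le \tfrac{C}{N}\int_0^T(1+\br{\nu^N_s}{\rho_4^2}+\cdots)\,ds$ is not available; the right-hand side would involve $\br{\nu^N_s}{\rho_4^4}$, which you have no handle on.

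The paper sidesteps this with a monotonicity trick that avoids the martingale altogether. It uses the explicit Poisson point measure representation of $\nu^N_t$ to write $\br{\nu^N_t}{\rho_4^2}$ as the initial value plus a Poisson integral of the jump increments $\tfrac{1}{N}\big(\rho_4^2(w_1')+\rho_4^2(w_2')-\rho_4^2(w_1)-\rho_4^2(w_2)\big)$, then simply \emph{drops the negative terms} $-\rho_4^2(w_1)-\rho_4^2(w_2)$ in the integrand. The resulting upper bound is a Poisson integral of a nonnegative integrand, hence nondecreasing in $t$, so $\sup_{t\le T}$ becomes evaluation at $T$; taking expectations replaces the Poisson integral by the integral against the intensity, and now only $\rho_4^2$ (not $\rho_4^4$) appears, so \eqref{assum:2clt:2} applies and Gronwall finishes. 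You can mimic this without the explicit Poisson construction by bounding each jump of $\br{\nu^N_t}{\rho_4^2}$ above by its positive part $\tfrac{1}{N}(\rho_4^2(w_1')+\rho_4^2(w_2'))$ and computing the compensator of the resulting increasing process.
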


The proof of this result will rely on an explicit construction of the process in terms of
Poisson point measures. This is similar to what is done in Section 2.2 of \citet{fourMel}
(though we will need to use a more abstract approach because our type spaces are not
necessarily Euclidean), so we will only sketch the main ideas.

We fix $N>0$ and consider the following random objects, defined on a sufficiently large
probability space: a $\cp$-valued random variable $\nu^N_0$ (corresponding to the initial
distribution) and a Poisson point measure $Q(ds,di,dj,du,d\theta)$ on
$[0,T]\!\times\!I_N\!\times\!  I_N\!\times\![0,1]\!\times\![0,1]$ with intensity measure
$(\olambda/N)\,ds\,di\,dj\,du\,d\theta$. We also consider a Blackwell-Dubins
representation $\varrho$ of $\cp(W\!\times\!W)$ with respect to a uniform random variable
on $[0,1]$, i.e., a continuous function
$\varrho\!:\cp(W\!\times\!W)\times[0,1]\longrightarrow W\!\times\!W$ such that
$\varrho(\xi,\cdot)$ has distribution $\xi$ (with respect to the Lebesgue measure on
$[0,1]$) for all $\xi\in\cp(W\!\times\!W)$ and $\varrho(\cdot,u)$ is continuous for almost
every $u\in[0,1]$ (see \citet{blackDub} for the existence of such a function). This gives
us an abstract way to use a uniform random variable to pick the pairs of types to which
agents go after interacting. Finally, we introduce the following notation:
$\eta^i(\nu^N_t)$ will denote the $i$-th type, with respect to some fixed total order of
$W$, appearing in $\nu^N_t$ (we recall that, under the axiom of choice, any set can be
well-ordered, and hence totally ordered; moreover, this ordering can be taken to be
measurable because $W$, being a Polish space, is measurably isomorphic to $[0,1]$). With
this definition, choosing a type uniformly from $\nu^N_t$ is the same as choosing $i$
uniformly from $I_N$ and considering the type given by $\eta^i(\nu^N_t)$.  Our process can
be represented then as follows:
\begin{equation}
\begin{aligned}
  \nu^N_t=\nu^N_0&+\int_0^t\!\int_{I_N}\!\int_{I_N}\!\int_0^1\!\int_0^1\!
  \frac{1}{N}\Big[\delta_{\varrho^1(b(\eta^i(\nu^N_{s-}),\eta^j(\nu^N_{s-}),\nu^N_{s-},\cdot),u)}\\
  &\hspace{1.5in}+\delta_{\varrho^2(b(\eta^i(\nu^N_{s-}),\eta^j(\nu^N_{s-}),\nu^N_{s-},\cdot),u)}-\delta_{\eta^i(\nu^N_{s-})}
  -\delta_{\eta^j(\nu^N_{s-})}\Big]\\
  &\hspace{2in}\cdot\uno{\theta\leq\lambda(\eta^i(\nu^N_{s-}),\eta^j(\nu^N_{s-}),\nu^N_{s-})/\olambda}\,\,
  Q(ds,di,dj,du,d\theta),
\end{aligned}\label{eq:constPoisson}
\end{equation}
where $\varrho^1$ and $\varrho^2$ are the first and second components of $\varrho$ (see
Definition 2.5 in \cite{fourMel} for more details on this construction).

\begin{proof}[Proof of Proposition \ref{prop:bdMoments}]
  Since $\br{\nu^N_t}{\rho^2_4}=\br{\nu^N_0}{\rho^2_4} +\sum_{0\leq s\leq
    t}\!\left[\br{\nu^N_s-\nu^N_{s-}}{\rho^2_4}\right]$, it is easy to deduce from
  the last equation that
  \begin{equation}
    \begin{aligned}
      \br{\nu^N_t}{\rho^2_4}&=\br{\nu^N_0}{\rho^2_4}
      +\int_0^t\!\int_{I_N}\!\int_{I_N}\!\int_0^1\!\int_0^1\!
      \frac{1}{N}\Big[\rho^2_4(\varrho^1(b(\eta^i(\nu^N_{s-}),\eta^j(\nu^N_{s-}),\nu^N_{s-},\cdot),u))\\
      &\hspace{0.9in}+\rho^2_4(\varrho^2(b(\eta^i(\nu^N_{s-}),\eta^j(\nu^N_{s-}),\nu^N_{s-},\cdot),u))
      -\rho^2_4(\eta^i(\nu^N_{s-}))-\rho^2_4(\eta^j(\nu^N_{s-}))\Big]\\
      &\hspace{2.15in}\cdot\uno{\theta\leq\lambda(\eta^i(\nu^N_{s-}),\eta^j(\nu^N_{s-}),\nu^N_{s-})/\olambda}\,\,
      Q(ds,di,dj,du,d\theta).
    \end{aligned}
  \end{equation}
  Taking expectations and ignoring the (positive) terms being subtracted we obtain
  \begin{align}
    \ee\!\left(\sup_{t\in[0,T]}\br{\nu^N_t}{\rho^2_4}\right)&\leq\ee\!\left(\br{\nu^N_0}{\rho^2_4}\right)
    +\frac{1}{N^2}\int_0^T\!\ee\Bigg(\sum_{i=1}^N\sum_{j=1}^N
    \lambda(\eta^i(\nu^N_{s-}),\eta^j(\nu^N_{s-}),\nu^N_{s-})\\
    &\hspace{1.25in}\cdot\int_0^1\!\Big[\rho^2_4(\varrho^1(b(\eta^i(\nu^N_{s-}),\eta^j(\nu^N_{s-}),\nu^N_{s-},\cdot),u))\\
      &\hspace{1.4in}+\rho^2_4(\varrho^2(b(\eta^i(\nu^N_{s-}),\eta^j(\nu^N_{s-}),\nu^N_{s-},\cdot),u))
      \Big]\,du\Bigg)\,ds\\
    &\hspace{-0.7in}=\ee\!\left(\br{\nu^N_0}{\rho^2_4}\right)
    +\int_0^T\!\ee\Bigg(\int_W\!\int_W\!\int_W\!\int_{W\!\times\!W}\!
    \big[\rho^2_4(w_1')+\rho^2_4(w_2')\big]\,\Lambda(w_1,w_2,z,dw_1'\ootimes dw_2')\\
    &\hspace{2.45in}\cdot\nu^N_s(dz)\,\nu^N_s(dw_2)\,\nu^N_s(dw_1)\Bigg)\,ds\\
    &\hspace{-0.7in}\leq\ee\!\left(\br{\nu^N_0}{\rho^2_4}\right)+C\int_0^T\!\ee\!\Bigg(\int_W\!\int_W\!\int_W\!
    \big[\rho^2_4(w_1)+\rho^2_4(w_2)+\rho^2_4(z)\big]\\
    &\hspace{2.2in}\cdot\nu^N_s(dz)\,\nu^N_s(dw_2)\,\nu^N_s(dw_1)\Bigg)\,ds\\
    &\hspace{-0.7in}\leq\ee\!\left(\br{\nu^N_0}{\rho^2_4}\right)+C\int_0^T\!
    \ee\!\left(\sup_{s\in[0,t]}\br{\nu^N_s}{\rho^2_4}\right)ds,
  \end{align}
  where we used \eqref{assum:2clt:2} in the second inequality.  By hypothesis
  $\ee\!\left(\br{\nu^N_0}{\rho^2_4}\right)$ is bounded uniformly in $N$, so by Gronwall's
  Lemma we deduce that
  \begin{equation}
    \ee\!\left(\sup_{t\in[0,T]}\br{\nu^N_t}{\rho^2_4}\right)\leq C_1e^{C_2T},\label{eq:bdMom2}
  \end{equation}
  with $C_1$ and $C_2$ being independent of $N$, whence \eqref{eq:mom2} follows.

  To get \eqref{eq:mom1}, write $(\rho^2_4\wedge L)(w)=\rho^2_4(w)\wedge L$, and observe
  that, since $\rho^2_4\wedge L\in\cbw$, Theorem \ref{thm:limit} implies that
  $\lim_{N\rightarrow\infty}\ee\!\left(\sup_{t\in[0,T]}\br{\nu^N_t}{\rho^2_4\wedge
      L}\right) =\sup_{t\in[0,T]}\br{\nu_t}{\rho^2_4\wedge L}$, so by \eqref{eq:bdMom2},
  \[\sup_{t\in[0,T]}\br{\nu_t}{\rho^2_4\wedge L}\leq C_1e^{C_2T}.\]
  Using the Monotone Convergence Theorem it is easy to check that
  $\sup_{s\in[0,T]}\br{\nu_s}{\rho^2_4\wedge L}\to\sup_{s\in[0,T]}\br{\nu_s}{\rho^2_4}$ as
  $L\rightarrow\infty$, and thus \eqref{eq:mom1} follows.
\end{proof}

For most of this section we will continue ignoring the type-process $\eta^N_t$, working
instead with the empirical distribution process $\nu^N_t$ we are interested in.  However,
we will need to consider $\eta^N_t$ directly in Step 2 of the proof of Theorem
\ref{thm:clt}, and we will need to use a moment estimate similar to \eqref{eq:mom2} for
this process. Observe that statement of the theorem (and that of Theorem \ref{thm:limit})
makes no assumption on the distribution of $\eta^N_0$, but instead only deals with the
initial empirical distribution $\nu^N_0$. Therefore we are free to choose $\eta^N_0$ in
any way compatible with $\nu^N_0$. For convenience we can construct $\eta^N_0$ in the
following way: assuming $\nu^N_0$ takes a specific value $\overline{\nu}^N_0\in\cp_a$,
choose $\eta^N_0(1)$ uniformly from $\overline{\nu}^N_0$ and then inductively choose
$\eta^N_0(i)$ uniformly from the remaining $N-i+1$ individuals, i.e., from
$\cramped{\big[N\overline{\nu}^N_0-\delta_{\eta^N_0(1)}-\dotsm-\delta_{\eta^N_0(i-1)}\big]/(N-i+1)}$.
It is clear then that, with this choice, $\eta^N_0$ is exchangeable and
$\frac{1}{N}\sum_{i=1}^N\delta_{\eta^N_0(i)} =\overline{\nu}^N_0$ as required. Moreover,
given any $i\in I_N$,
$\ee\!\left(\rho^2_4(\eta^N_0(i))\right)=\ee\!\left(\br{\nu^N_0}{\rho^2_4}\right)$, and
thus the moment assumption that we made on $\nu^N_0$ can be rewritten as
$\sup_{N>0}\sup_{i\in I_N}\ee\!\left(\rho^2_4(\eta^N_0(i))\right)<\infty$ for all $i\in
I_N$. The proof of \eqref{eq:mom2} can then be adapted (by modifying slightly the explicit
construction we made of $\nu^N_t$ to deal with $\eta^N_t$) to obtain
\begin{equation}
  \label{eq:mom2EtaN}
  \sup_{N>0}\sup_{i\in I_N}\ee\!\left(\sup_{t\in[0,T]}\rho^2_4(\eta^N_t(i))\right)<\infty.
\end{equation}
(We remark that the proof of this estimate uses \eqref{eq:mom2} itself).

\subsubsection{Extension of \texorpdfstring{$\br{\nu^N_t}{\cdot}$}{<\textbackslash
    nu\textcircumflex N\textunderscore t,\textcdot>} and
  \texorpdfstring{$\br{\nu_t}{\cdot}$}{<\textbackslash nu\textunderscore t,\textcdot>} to
  \texorpdfstring{$\chfp$}{H\textunderscore4'}}\label{sec:prelim}

The $\cp$-valued process $\nu^N_t$ can be seen as a linear functional on $\mw$ via the
mapping $\varphi\longmapsto\br{\nu^N_t}{\varphi}$, and the same can be done for
$\nu_t$. However, since $\chf$ consists of measurable but not necessarily bounded
functions, the integrals $\br{\nu^N_t}{\varphi}$ and $\br{\nu_t}{\varphi}$ may
diverge. Our first task will be to show that these integrals are finite and, moreover,
that $\nu^N_t$ (and $\nu_t$) can be seen as taking values in $\chfp$ (and thus also in all
the other dual spaces we are considering). A consequence of this will be that $\sigma^N_t$
is well defined as an $\chfp$-valued process.

\begin{prop}\label{prop:ext}
  \mbox{} The mapping $\varphi\in\chf\mapsto\br{\nu^N_t}{\varphi}$ is in $\chfp$ almost
  surely for every $t\in[0,T]$ and $N>0$. Analogously, the mapping
  $\varphi\in\chf\mapsto\br{\nu_t}{\varphi}$ is in $\chfp$ for every $t\in[0,T]$.

  Furthermore, $\nu_t$ satisfies \eqref{eq:intDiffSys} for every $\varphi\in\chf$, while
  $\nu^N_t$ satisfies \eqref{eq:intForm} for every $\varphi\in\chf$ almost surely. In
  particular, given any $\varphi\in\chf$, $M^{N,\varphi}_t$ is a martingale starting at 0
  such that the predictable quadratic covariations
  $\big<M^{N,\varphi_1},M^{N,\varphi_2}\big>_t$ are the ones given by the formula in
  \eqref{eq:formCoQuadN} for all $\varphi_1,\varphi_2\in\chf$.
\end{prop}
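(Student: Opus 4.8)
The plan is to reduce everything to a single bound: $\sup_{t\in[0,T]}\br{\nu^N_t}{\rho_4^2}<\infty$ (almost surely, with uniform-in-$N$ expectation) and the analogous bound for $\nu_t$, both of which are exactly the content of Proposition \ref{prop:bdMoments}. First I would observe that by \eqref{assum:rho:h}, for $\varphi\in\chf$ and any $w\in W$ we have $|\varphi(w)|\leq C\norm\varphi_\chf\rho_4(w)$, hence $|\varphi(w)|^2\leq C^2\norm\varphi_\chf^2\rho_4^2(w)$. Integrating against $\nu^N_t$ (a probability measure) gives $|\br{\nu^N_t}{\varphi}|\leq\br{\nu^N_t}{|\varphi|}\leq C\norm\varphi_\chf\br{\nu^N_t}{\rho_4}\leq C\norm\varphi_\chf\br{\nu^N_t}{\rho_4^2}^{1/2}$, using Jensen's inequality and $\rho_4\geq1$. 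By \eqref{eq:mom2} the right-hand side is almost surely finite and the linear functional $\varphi\mapsto\br{\nu^N_t}{\varphi}$ is therefore bounded on $\chf$, i.e. an element of $\chfp$, with operator norm controlled by $C\sup_{t\in[0,T]}\br{\nu^N_t}{\rho_4^2}^{1/2}$. The identical argument with \eqref{eq:mom1} in place of \eqref{eq:mom2} handles $\nu_t$. Once $\nu^N_t,\nu_t\in\chfp$, every other space $\chip$ and $\ccip$ in the chain \eqref{eq:normIneqD} contains them as well, and in particular $\sigma^N_t=\sqrt N(\nu^N_t-\nu_t)$ is a well-defined $\chfp$-valued random variable.

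Next I would upgrade \eqref{eq:intForm} and \eqref{eq:intDiffSys} from $\varphi\in\mw$ (equivalently, bounded $\varphi$) to $\varphi\in\chf$. The natural device is truncation: for $\varphi\in\chf$ and $L>0$ set $\varphi_L=(\varphi\vee(-L))\wedge L\in\mw$; then $\varphi_L\to\varphi$ pointwise and $|\varphi_L|\leq|\varphi|\leq C\norm\varphi_\chf\rho_4$. Each term appearing in \eqref{eq:intForm} and \eqref{eq:intDiffSys} is an integral of $\varphi$ (or of a difference such as $\varphi(w')-\varphi(w)$, or $\varphi(w_1')+\varphi(w_2')-\varphi(w_1)-\varphi(w_2)$) against a finite measure on $W$ or $W\times W$ built from $\nu_s$ (resp. $\nu^N_s$) and the kernels $\Gamma,\Lambda$. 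To pass to the limit $L\to\infty$ inside these integrals I would invoke dominated convergence, the domination coming from \eqref{assum:2clt:1}–\eqref{assum:2clt:2} together with the moment bounds of Proposition \ref{prop:bdMoments}: e.g. $\int_{W\times W}(|\varphi(w_1')|+|\varphi(w_2')|)\,\Lambda(w_1,w_2,z,dw_1'\otimes dw_2')\leq C\norm\varphi_\chf\int_{W\times W}(\rho_4(w_1')+\rho_4(w_2'))\,\Lambda(w_1,w_2,z,dw_1'\otimes dw_2')\leq C\norm\varphi_\chf(\rho_4^2(w_1)+\rho_4^2(w_2)+\rho_4^2(z))^{1/2}$ after Jensen, and the latter is $\nu^N_s(dz)\,\nu^N_s(dw_2)\,\nu^N_s(dw_1)\,ds$-integrable on $[0,T]$ by \eqref{eq:mom2}. (For the martingale term $M^{N,\varphi}_t$ in \eqref{eq:intForm} one defines $M^{N,\varphi}_t:=\br{\nu^N_t}{\varphi}-\br{\nu^N_0}{\varphi}-(\text{drift})$ via the now-established extension; alternatively $M^{N,\varphi}_t=\lim_L M^{N,\varphi_L}_t$, the convergence being in $L^2$ uniformly on $[0,T]$ thanks to the quadratic-variation formula \eqref{eq:formCoQuadN} and the same moment estimates, so the limit is again a martingale starting at $0$.)

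Finally, the quadratic-covariation formula \eqref{eq:formCoQuadN} extends to $\varphi_1,\varphi_2\in\chf$ by exactly the same truncation-plus-dominated-convergence argument: the integrand $(\varphi_1(w_1')+\varphi_1(w_2')-\varphi_1(w_1)-\varphi_1(w_2))(\varphi_2(w_1')+\varphi_2(w_2')-\varphi_2(w_1)-\varphi_2(w_2))$ is dominated, up to constants depending on $\norm{\varphi_1}_\chf,\norm{\varphi_2}_\chf$, by $(\rho_4(w_1')+\rho_4(w_2')+\rho_4(w_1)+\rho_4(w_2))^2$, which is integrable against the relevant measure on $[0,T]$ by \eqref{assum:2clt:2} and \eqref{eq:mom2}, and the bilinear character of both sides lets one identify the limit with $\big<M^{N,\varphi_1},M^{N,\varphi_2}\big>_t$ (for instance via polarization from the diagonal case $\varphi_1=\varphi_2$, where convergence in $L^1$ of $\langle M^{N,\varphi_L}\rangle_t$ to $\langle M^{N,\varphi}\rangle_t$ combined with the $L^2$-convergence $M^{N,\varphi_L}_t\to M^{N,\varphi}_t$ pins down the bracket of the limit). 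I expect the only real work to be bookkeeping the domination in Step 2 and Step 3 — verifying that each of the several integral terms is controlled by $\rho_4^2$-moments via \eqref{assum:rho:h} and \eqref{assum:2clt:1}–\eqref{assum:2clt:2}; there is no conceptual obstacle, since Proposition \ref{prop:bdMoments} has already done the hard analytic work of propagating the moment bound in time.
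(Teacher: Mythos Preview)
Your proposal is correct and follows essentially the same approach as the paper: bound $|\varphi|\leq C\norm\varphi_\chf\rho_4$ via \eqref{assum:rho:h} and invoke Proposition \ref{prop:bdMoments} for the first claim, then truncate $\varphi$ and pass to the limit using dominated convergence / uniform integrability for the martingale and quadratic-variation claims. The paper's argument is slightly more terse (it writes the truncation as $\varphi\wedge m$ and verifies the martingale property via $\ee(\psi_1(\nu^N_{s_1})\dotsm\psi_k(\nu^N_{s_k})[M^{N,\varphi}_t-M^{N,\varphi}_s])=0$ using uniform integrability from a second-moment bound), but the substance is the same.
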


\begin{proof}
  We are only going to prove the assertions for $\nu^N_t$, the ones for $\nu_t$ can be
  checked similarly (and more easily).
  
  The first claim follows directly from \eqref{assum:rho:h} and Proposition
  \ref{prop:bdMoments}: for $\varphi\in\chf$,
  \[\left|\br{\nu^N_t}{\varphi}\right|\leq\int_W\!|\varphi(w)|\,\nu^N_t(dw)
  \leq C\norm\varphi_\chf\int_W\!\rho_4(w)\,\nu^N_t(dw) \leq
  C\norm\varphi_\chf\sqrt{\br{\nu^N_t}{\rho^2_4}},\] and the term inside the square root
  is almost surely bounded by \eqref{eq:mom2}, so the mapping
  $\varphi\in\chf\longmapsto\br{\nu^N_t}{\varphi}$ is continuous almost surely.
 
  Next we need to show that $\br{\nu^N_t}{\varphi}$ satisfies \eqref{eq:intForm} for all
  $\varphi\in\chf$. That is, we need to show that the formula
  \begin{equation}\label{eq:defMgGen}
    M^{N,\varphi}_t=\br{\nu^N_t}{\varphi}-\br{\nu^N_0}{\varphi}
    -\int_0^t\!\int_W\!\int_W\!\int_W\!\Lambda\varphi(w_1,w_2;z)
    \,\nu^N_s(dz)\,\nu^N_s(dw_2)\,\nu^N_s(dw_1)\,ds   
  \end{equation}
  defines a martingale for each $\varphi\in\chf$. Let $\varphi\in\chf$ and $m>0$ and write
  $(\varphi\wedge m)(w)=\varphi(w)\wedge m$. $\varphi\wedge m$ is in $\mw$, so
  $M^{N,\varphi\wedge m}_t$ is a martingale. We deduce that given any $0\leq
  s_1\leq\dotsm\leq s_k<s<t$ and any continuous bounded functions $\psi_1,\dotsc,\psi_k$
  on $\chf$, if we
  let \[X^m=\psi_1(\nu^N_{s_1})\dotsm\psi_k(\nu^N_{s_k})\big[M^{N,\varphi\wedge m}_t
  -M^{N,\varphi\wedge m}_s\big],\] then $\ee(X^m)=0$. Using the Monotone Convergence
  Theorem one can show that
  $X^m\to\psi_1(\nu^N_{s_1})\dotsm\psi_k(\nu^N_{s_k})\big[M^{N,\varphi}_t
  -M^{N,\varphi}_s\big]$ as $m\to\infty$. On the other hand, the sequence $(X^m)_{m>0}$ is
  uniformly integrable. Indeed, using \eqref{assum:rho:h} and \eqref{eq:mom2} one can show
  that
  \[\ee\!\left(\left|\big(\psi_1(\nu^N_{s_1})\dotsm\psi_k(\nu^N_{s_k})\big[M^{N,\varphi\wedge m}_t
      -M^{N,\varphi\wedge m}_s\big]\right|^2\right) \leq
  Ct^2\ee\!\left(\sup_{r\in[0,t]}\br{\nu^N_r}{\rho^2_4}\right)<\infty.\] We deduce
  that \[\ee(\psi_1(\nu^N_{s_1})\dotsm\psi_k(\nu^N_{s_k})\big[M^{N,\varphi}_t
  -M^{N,\varphi}_s\big])=\lim_{m\to\infty}\ee(X^m)=0,\] which implies that
  $M^{N,\varphi}_t$ is a martingale. The fact that
  $\br{M^{N,\varphi_1}}{M^{N,\varphi_2}}_t$ has the right form follows from the same
  arguments as those for \eqref{eq:formCoQuadN} (here we need to replace $\varphi_1$ and
  $\varphi_2$ by $\varphi_1^m$ and $\varphi_2^m$ and then take $m\rightarrow\infty$ as
  above).
\end{proof}

\subsubsection{The drift term}

By Proposition \ref{prop:ext}, we have now that the fluctuations process $\sigma^N_t$ is
well defined as a process taking values in $\chfp$ and it satisfies
\begin{equation}\label{eq:orFormSigmaN}
  \begin{aligned}
    &\br{\sigma^N_t}{\varphi}=\sqrt{N}\br{\nu^N_0-\nu_0}{\varphi}
    +\sqrt{N}M^{N\varphi}_t\\
    &\quad+\sqrt{N}\int_0^t\!\int_W\!\int_W\!\int_W\!\Lambda\varphi(w_1,w_2;z)
    \Big[\nu^N_s(dz)\nu^N_s(dw_2)\nu^N_s(dw_1)-\nu_s(dz)\nu_s(dw_2)\nu_s(dw_1)\Big]ds
  \end{aligned}
\end{equation}
for every $\varphi\in\chf$. The integral term can be rewritten as
\begin{multline}
\int_0^t\!\int_W\!\int_W\!\int_W\!\Lambda\varphi(w_1,w_2;z)
  \Big[\sigma^N_s(dz)\nu^N_s(dw_2)\nu^N_s(dw_1)\\
  +\nu_s(dz)\big(\sigma^N_s(dw_2)\nu^N_s(dw_1)+\nu_s(dw_2)\sigma^N_s(dw_1)\big)\Big]ds.
 \end{multline}
Therefore,
\begin{equation}\label{eq:formSigmaN}
  \br{\sigma^N_t}{\varphi}=\sqrt{N}\br{\nu^N_0-\nu_0}{\varphi}+\sqrt{N}M^{N,\varphi}_t
  +\int_0^t\!\br{\sigma^N_s}{J^N_s\varphi}\,ds
\end{equation}
for each $\varphi\in\chf$, where
\begin{multline}\label{eq:defJN}
  J^N_s\varphi(z)=\int_W\!\int_W\!\Lambda\varphi(w_1,w_2;z)\,\nu^N_s(dw_2)\,\nu^N_s(dw_1)\\
  +\int_W\!\int_W\!\Lambda\varphi(w,z;x)\,\nu^N_s(dw)\,\nu_s(dx)
  +\int_W\!\int_W\!\Lambda\varphi(z,w;x)\,\nu_s(dw)\,\nu_s(dx).
\end{multline}

Observe that $J^N_s=J_{\nu^N_s,\nu_s}$ and $J_s=J_{\nu_s,\nu_s}$, where the operators
$J_{\mu_1,\mu_2}$ are the ones defined in Assumption \ref{assum:2clt}. Hence
\eqref{assum:2clt:3} and Proposition \ref{prop:bdMoments} imply that $J^N_s$ and $J_s$ are
bounded linear operators on each space $\cci$ ($i=0,2,3$) and, moreover, for all
$\varphi\in\cci$,
\begin{equation}
  \norm{J^N_s\varphi}_\cci\leq C\norm\varphi_\cci
  \qquad\text{and}\qquad\norm{J_s\varphi}_\cci\leq C\norm\varphi_\cci,\label{eq:bdDrifts}
\end{equation}
almost surely for some constant $C>0$ independent of $N$ and $s$. Similarly, given any
$\varphi\in\ccz$,
\begin{equation}
  \norm{\left(J^N_s-J_s\right)\varphi}_\ccz\leq
  C\norm\varphi_\ccz\norm{\nu^N_s-\nu_s}_\ccdp\label{eq:lipDrifts}
\end{equation}
almost surely for some constant $C>0$ independent of $N$ and $s$.

\subsubsection{Uniform estimate for the martingale term in
  \texorpdfstring{$\chfp$}{H\textunderscore4'}}\label{sec:unifBdMg}

Proposition \ref{prop:ext} implies that the martingale term $M^{N,\varphi}_t$ is
well defined for all $\varphi\in\chf$. We will denote by $M^N_t$ the bounded linear
functional on $\chf$ given by $M^N_t(\varphi)=M^{N,\varphi}_t$.

\begin{teosec}\label{thm:bdUnifMgN}
  $\sqrt{N}M^N_t$ is a c\`adl\`ag square integrable martingale in $\chfp$, whose
  Doob--Meyer process
  $\big(\big\langle\!\!\big\langle\sqrt{N}M^N\big\rangle\!\!\big\rangle_t(\varphi_1)\big)(\varphi_2)
  =N\big<\sqrt{N}M^N(\varphi_1),\sqrt{N}M^N(\varphi_2)\big>_t$ (which is a
  linear operator from $\chf$ to $\chfp$) can be obtained from the formula in \eqref{eq:formCoQuadN}.
  Moreover,
  \[\sup_{N>0}\ee\!\left(\sup_{t\in[0,T]}\left\|\sqrt{N}M^N_t\right\|^2_\chfp\right)<\infty.\]
\end{teosec}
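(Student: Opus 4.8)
The plan is to reduce everything to coordinate-wise estimates against a fixed complete orthonormal basis $(\phi_k)_{k\geq0}$ of $\chf$ and then assemble the pieces through Parseval's identity $\|A\|^2_\chfp=\sum_k|A(\phi_k)|^2$ for $A\in\chfp$. By Proposition~\ref{prop:ext}, for every $k$ the real process $M^{N,\phi_k}_t$ is a c\`adl\`ag square-integrable martingale starting at $0$, and (recall that we have assumed $\Gamma\equiv0$) its predictable quadratic variation is the one given by \eqref{eq:formCoQuadN} with $\varphi_1=\varphi_2=\phi_k$; multiplying by $N$ and rewriting $\lambda b$ by means of Assumption~\ref{assum:1clt} gives
\begin{multline*}
  \big\langle\sqrt N M^{N,\phi_k}\big\rangle_t=\int_0^t\!\int_W\!\int_W\!\int_W\!\int_{W\!\times\!W}\!
  \big(\phi_k(w_1')+\phi_k(w_2')-\phi_k(w_1)-\phi_k(w_2)\big)^2\\
  \cdot\,\Lambda(w_1,w_2,z,dw_1'\ootimes dw_2')\,\nu^N_s(dz)\,\nu^N_s(dw_2)\,\nu^N_s(dw_1)\,ds.
\end{multline*}
Since $\ee\big(\big|\sqrt N M^{N,\phi_k}_t\big|^2\big)=\ee\big(\big\langle\sqrt N M^{N,\phi_k}\big\rangle_t\big)$, summing over $k$ and invoking the estimate derived below will show that $\ee\big(\|\sqrt N M^N_t\|^2_\chfp\big)<\infty$, so that $\sqrt N M^N_t$ is indeed $\chfp$-valued. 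That $\sqrt N M^N_\cdot$ is then a c\`adl\`ag square-integrable $\chfp$-valued martingale whose Doob--Meyer operator process is given coordinate-wise by \eqref{eq:formCoQuadN} is standard Hilbert-space martingale theory (cf. \citet{metiv}): the coordinate processes are martingales, the moment bound controls $\ee\|\sqrt N M^N_t\|^2_\chfp$ uniformly on $[0,T]$, the summability $\sum_k N\langle M^{N,\phi_k}\rangle_t<\infty$ makes the tensor quadratic variation well defined, and the paths are c\`adl\`ag in $\chfp$ because $\nu^N_t$ jumps only finitely often on $[0,T]$, each jump being a bounded functional on $\chf$ by \eqref{assum:rho:h} and \eqref{eq:mom2EtaN}, while the compensator in \eqref{eq:defMgGen} is continuous in $\chfp$.

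The heart of the matter is controlling the sum over the basis. The key observation is that \eqref{assum:rho:h}, read through Parseval, says that the evaluation functional $\varphi\mapsto\varphi(w)$ belongs to $\chfp$ with norm at most $C\rho_4(w)$, so that $\sum_k\phi_k(w)^2\leq C^2\rho^2_4(w)$ for every $w\in W$. Combining this with the elementary inequality $(a+b-c-d)^2\leq4(a^2+b^2+c^2+d^2)$, I would bound
\[
  \sum_k\big(\phi_k(w_1')+\phi_k(w_2')-\phi_k(w_1)-\phi_k(w_2)\big)^2\leq 4C^2\big(\rho^2_4(w_1')+\rho^2_4(w_2')+\rho^2_4(w_1)+\rho^2_4(w_2)\big),
\]
and then, using Tonelli's theorem, sum the formula for $\big\langle\sqrt N M^{N,\phi_k}\big\rangle_T$ over $k$. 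In the resulting quadruple integral, the contribution of $\rho^2_4(w_1')+\rho^2_4(w_2')$ is bounded by $C(\rho^2_4(w_1)+\rho^2_4(w_2)+\rho^2_4(z))$ using the moment assumption \eqref{assum:2clt:2}, and the contribution of $\rho^2_4(w_1)+\rho^2_4(w_2)$ is bounded by $\olambda(\rho^2_4(w_1)+\rho^2_4(w_2))$ since $\Lambda(w_1,w_2,z,\cdot)$ has total mass at most $\olambda$. Integrating out the remaining factors against the probability measures $\nu^N_s$ then leaves $\sum_k\big\langle\sqrt N M^{N,\phi_k}\big\rangle_T\leq C\int_0^T\!\br{\nu^N_s}{\rho^2_4}\,ds\leq CT\sup_{s\in[0,T]}\br{\nu^N_s}{\rho^2_4}$.

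Finally, applying Doob's $L^2$ maximal inequality to each coordinate martingale and using $\sup_t\sum_k\leq\sum_k\sup_t$ together with Tonelli,
\begin{multline*}
  \ee\!\left(\sup_{t\in[0,T]}\big\|\sqrt N M^N_t\big\|^2_\chfp\right)\leq\sum_k\ee\!\left(\sup_{t\in[0,T]}\big|\sqrt N M^{N,\phi_k}_t\big|^2\right)\\
  \leq 4\sum_k\ee\!\left(\big\langle\sqrt N M^{N,\phi_k}\big\rangle_T\right)\leq CT\,\ee\!\left(\sup_{s\in[0,T]}\br{\nu^N_s}{\rho^2_4}\right),
\end{multline*}
and the right-hand side is bounded uniformly in $N$ by \eqref{eq:mom2}. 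I expect the main obstacle to be exactly the step of getting the orthonormal-basis sum under control: once one recognizes that \eqref{assum:rho:h} is precisely the statement that the point evaluations are square-summable along any orthonormal basis with the weight $\rho^2_4$, the rest is a routine combination of \eqref{assum:2clt:2}, the mass bound on $\Lambda$, and the propagated moment estimate \eqref{eq:mom2}. The only other point requiring some care is the verification of the $\chfp$-valued c\`adl\`ag martingale structure, but this is standard given the explicit jump description of $\nu^N_t$.
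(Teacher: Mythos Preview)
Your proposal is correct and follows essentially the same approach as the paper's own proof: both fix an orthonormal basis of $\chf$, use \eqref{assum:rho:h} via Parseval to get $\sum_k\phi_k(w)^2\leq C\rho^2_4(w)$, apply Doob's $L^2$ inequality coordinate-wise, then control the resulting sum of quadratic variations using \eqref{assum:2clt:2} and the propagated moment estimate from Proposition~\ref{prop:bdMoments}. The only cosmetic differences are that the paper defers the c\`adl\`ag verification to the argument in \citet{meleard} rather than sketching it, and ends with $C\int_0^T\ee\big(\br{\nu^N_s}{\rho^2_4}\big)\,ds$ rather than your slightly cruder $CT\,\ee\big(\sup_s\br{\nu^N_s}{\rho^2_4}\big)$.
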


\begin{proof}
  We already know, by Proposition \ref{prop:ext}, that $\sqrt{N}M^N_t$ is a martingale in
  $\chfp$ with the right Doob--Meyer process. The fact that the paths of $\sqrt{N}M^N_t$
  are in $D([0,T],\chfp)$ can be checked by the same arguments as those in the proof of
  Corollary 3.8 in \citet{meleard}. So we only need to show the last assertion. Let
  $(\phi_k)_{k\geq0}$ be an orthonormal complete basis of $\chf$. We observe that, by 
  \eqref{assum:rho:h}, if $\chi_w\in\chfp$ is defined by $\chi_w(\varphi)=\varphi(w)$ then
  \[\sum_{k\geq0}\phi_k^2(w)=\|\chi_w\|^2_\chfp\leq C\rho^2_4(w).\]
  Thus by Proposition \ref{prop:ext} and Doob's inequality,
  \begin{align*}
    \ee\bigg(\sup_{t\in[0,T]}&\left\|\sqrt{N}M^N_t\right\|^2_\chfp\bigg)
    \leq\ee\!\left(\sum_{k\geq0}\sup_{t\in[0,T]}N\left|M^{N,\phi_k}_t\right|^2\right)
    \leq4\sum_{k\geq0}\ee\!\left(N\big<M^{N,\phi_k},M^{N,\phi_k}\big>_T\right)\\
    &=4\ee\Bigg(\int_0^T\!\int_W\!\int_W\!\int_W\!\int_{W\!\times\!W}
    \!\sum_{k\geq0}\big(\phi_k(w_1')-\phi_k(w_1)
    +\phi_k(w_2')-\phi_k(w_2)\big)^2\\
    &\hspace{1.6in}\cdot\Lambda(w_1,w_2,z,dw_1'\ootimes dw_2')
    \,\nu^N_s(dz)\,\nu^N_s(dw_2)\,\nu^N_s(dw_1)\,ds\Bigg)\\
    &\leq C\int_0^T\!\ee\Bigg(\int_W\!\int_W\!\int_W\!\int_{W\!\times\!W}\!
    \big(\rho^2_4(w_1)+\rho^2_4(w_2)+\rho^2_4(w_1')+\rho^2_4(w_2')\big)\\
    &\hspace{1.6in}\cdot\Lambda(w_1,w_2,z,dw_1'\ootimes dw_2')
    \,\nu^N_s(dz)\,\nu^N_s(dw_2)\,\nu^N_s(dw_1)\Bigg)\,ds\\
    &\leq C\int_0^T\!\ee\Bigg(\int_W\!\int_W\!\int_W\!
    \big(2\rho^2_4(w_1)+2\rho^2_4(w_2)+\rho^2_4(z)\big)
    \,\nu^N_s(dz)\,\nu^N_s(dw_2)\,\nu^N_s(dw_1)\Bigg)\,ds\\
    &\leq C\int_0^T\!\ee\!\left(\br{\nu^N_s}{\rho^2_4}\right)ds.
  \end{align*}
  The last integral is bounded, uniformly in $N$, by Proposition \ref{prop:bdMoments}.
\end{proof}

\subsubsection{Evolution equation for \texorpdfstring{$\sigma^N_t$}{\textbackslash
    sigma\textcircumflex N\textunderscore t} in \texorpdfstring{$\chtp$}{H\textunderscore3'}}

Recall that our goal is to prove convergence of $\sigma^N_t$ in $D([0,T],\chup)$. Therefore, a
necessary previous step is to make sense of \eqref{eq:formSigmaN} as an equation in
$\chup$. We will actually need to show something stronger: $\sigma^N_t$ can be seen as a
semimartingale in $\chtp$, whose semimartingale decomposition takes the form suggested by
\eqref{eq:formSigmaN}. We need the following simple result first (for its proof see
Proposition 3.4 of \citet{meleard}):

\begin{lem}\label{lem:bdSigmaNLoc}
  For every $N>0$ there is a constant $C(N)>0$ such that
  \[\sup_{t\in[0,T]}\ee\!\left(\|\sigma^N_t\|_\chfp\right)\leq C(N).\]
\end{lem}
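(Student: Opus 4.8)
The plan is to bound $\norm{\sigma^N_t}_\chfp$ crudely, using the triangle inequality together with the moment bounds already established in Proposition \ref{prop:bdMoments}. The key point is that here $N$ is \emph{fixed}, so no uniformity in $N$ is required; this is what makes the estimate soft, in contrast to the uniform-in-$N$ bounds needed later in the proof.

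First I would record, exactly as in the proof of Proposition \ref{prop:ext}, that \eqref{assum:rho:h} yields for every $\varphi\in\chf$
\[\left|\br{\nu^N_t}{\varphi}\right|\leq C\norm\varphi_\chf\int_W\!\rho_4(w)\,\nu^N_t(dw)\leq C\norm\varphi_\chf\sqrt{\br{\nu^N_t}{\rho^2_4}},\]
so that, regarded as elements of $\chfp$, one has $\norm{\nu^N_t}_\chfp\leq C\sqrt{\br{\nu^N_t}{\rho^2_4}}$ almost surely and, similarly, $\norm{\nu_t}_\chfp\leq C\sqrt{\br{\nu_t}{\rho^2_4}}$ (both functionals were shown to lie in $\chfp$ in Proposition \ref{prop:ext}). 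Since $\sigma^N_t=\sqrt{N}\big(\nu^N_t-\nu_t\big)$, the triangle inequality then gives
\[\ee\!\left(\norm{\sigma^N_t}_\chfp\right)\leq\sqrt{N}\,\ee\!\left(\norm{\nu^N_t}_\chfp\right)+\sqrt{N}\,\norm{\nu_t}_\chfp\leq C\sqrt{N}\left(\ee\!\left(\sqrt{\br{\nu^N_t}{\rho^2_4}}\right)+\sqrt{\br{\nu_t}{\rho^2_4}}\right).\]
By Jensen's inequality $\ee\!\left(\sqrt{\br{\nu^N_t}{\rho^2_4}}\right)\leq\sqrt{\ee\!\left(\br{\nu^N_t}{\rho^2_4}\right)}$, and both $\sup_{t\in[0,T]}\ee\!\left(\br{\nu^N_t}{\rho^2_4}\right)$ and $\sup_{t\in[0,T]}\br{\nu_t}{\rho^2_4}$ are finite by Proposition \ref{prop:bdMoments} (in fact bounded uniformly in $N$, which is more than we need here). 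Taking the supremum over $t\in[0,T]$ gives the claim with $C(N)$ equal to $C\sqrt{N}$ times a constant independent of $t$ and $N$.

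There is essentially no obstacle in this lemma: the only things one must invoke are that the functionals $\varphi\mapsto\br{\nu^N_t}{\varphi}$ and $\varphi\mapsto\br{\nu_t}{\varphi}$ genuinely define elements of $\chfp$ (already done in Proposition \ref{prop:ext}) and that $\br{\nu_t}{\rho^2_4}$ is finite (this is \eqref{eq:mom1}); everything else is Jensen plus the triangle inequality, and the estimate is deliberately allowed to degenerate as $N\to\infty$.
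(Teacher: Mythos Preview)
Your argument is correct and is precisely the kind of soft estimate the lemma calls for; note that the paper does not actually write out a proof here but simply refers the reader to Proposition~3.4 of \citet{meleard}, whose argument is of the same flavor. Your observation that the bound may freely blow up like $\sqrt{N}$ is exactly the point: this lemma is only used (in the proof of Proposition~\ref{prop:bochN}) to guarantee that, for each fixed $N$, the Bochner integral $\int_0^t(J^N_s)^*\sigma^N_s\,ds$ makes sense in $\chtp$, and no uniformity in $N$ is needed for that.
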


Recall that under our assumptions, $J^N_s$ need not be (and in general is not) a bounded
operator on $\cht$, nor on any other $\chii$, and in fact $J^N_s(\chii)$ need not even be
contained in $\chii$, so it does not make complete sense to speak of
$\left(J^N_s\right)^*$ as the adjoint operator of $J^N_s$. Nevertheless, for convenience
we will abuse notation by writing $\left(J^N_s\right)^*\sigma^N_s$ to denote the linear
functional defined by the following mapping:
\[\varphi\in\cht\longmapsto\left(J^N_s\right)^*\sigma^N_s(\varphi)=\br{\sigma^N_s}{J^N_s\varphi}\in\rr.\]
Part of the proof of the following result will consist in showing that
$\left(J^N_s\right)^*\sigma^N_s$ is actually in $\chtp$.

\begin{prop}\label{prop:bochN}
  For each $N>0$, $\sigma^N_t$ is an $\chtp$-valued semimartingale, and its Doob--Meyer
  decomposition is given by
  \begin{equation}\label{eq:bochN}
    \sigma^N_t=\sigma^N_0+\sqrt{N}M^N_t+\int_0^t\!\left(J^N_s\right)^*\sigma^N_s\,ds,
  \end{equation}
  where the above is a Bochner integral in $\chtp$.
\end{prop}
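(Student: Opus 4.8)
The plan is to upgrade the scalar identity \eqref{eq:formSigmaN}, valid for every $\varphi\in\chf$, to an identity between $\chtp$-valued processes. The first and most delicate point is to show that, for each fixed $s$, the linear functional $\varphi\in\cht\longmapsto\br{\sigma^N_s}{J^N_s\varphi}$ that we are denoting $\left(J^N_s\right)^*\sigma^N_s$ really belongs to $\chtp$. Since $J^N_s$ need not be bounded on any of the Hilbert spaces $\chii$, one cannot directly make sense of an adjoint acting between dual Hilbert spaces; the way around this is to use that, by \eqref{eq:bdDrifts}, $J^N_s$ \emph{is} bounded on the Banach space $\cct$. Then, for $\varphi\in\cht$, the embedding $\cht\hookrightarrow\cct$ in \eqref{eq:normIneq} gives $J^N_s\varphi\in\cct\hookrightarrow\chf$, and pairing with $\sigma^N_s\in\chfp$ (legitimate by Proposition \ref{prop:ext}) and using the dual embedding $\chfp\hookrightarrow\cctp$ in \eqref{eq:normIneqD} gives
\[\left|\br{\sigma^N_s}{J^N_s\varphi}\right|\leq\norm{\sigma^N_s}_\cctp\norm{J^N_s\varphi}_\cct\leq C\norm{\sigma^N_s}_\chfp\norm\varphi_\cct\leq C\norm{\sigma^N_s}_\chfp\norm\varphi_\cht,\]
so that $\left(J^N_s\right)^*\sigma^N_s\in\chtp$ with $\norm{\left(J^N_s\right)^*\sigma^N_s}_\chtp\leq C\norm{\sigma^N_s}_\chfp$, the constant being a.s.\ finite and independent of $N$ and $s$.

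Next I would show that $s\longmapsto\left(J^N_s\right)^*\sigma^N_s$ is Bochner integrable on $[0,T]$ as a $\chtp$-valued map. Strong measurability follows from Pettis's theorem, since $\chtp$ is a separable Hilbert space and, for each fixed $\varphi\in\cht$, the scalar map $s\mapsto\br{\sigma^N_s}{J^N_s\varphi}$ is measurable by the c\`adl\`ag (respectively continuous) paths of $\nu^N_s$ (respectively $\nu_s$, which enters through \eqref{eq:defJN}) and of $\sigma^N_s$. Integrability then follows from the bound above and Lemma \ref{lem:bdSigmaNLoc}:
\[\ee\!\left(\int_0^T\!\norm{\left(J^N_s\right)^*\sigma^N_s}_\chtp\,ds\right)\leq C\int_0^T\!\ee\!\left(\norm{\sigma^N_s}_\chfp\right)ds\leq C\,T\,C(N)<\infty.\]
Hence $\int_0^t\left(J^N_s\right)^*\sigma^N_s\,ds$ is a well-defined element of $\chtp$ for every $t\in[0,T]$, and the resulting process is adapted, continuous in $t$, and of finite variation, hence predictable.

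To identify the decomposition I would restrict \eqref{eq:formSigmaN} to $\varphi\in\cht$ (allowed since $\cht\hookrightarrow\chf$), rewrite $\sqrt{N}\br{\nu^N_0-\nu_0}{\varphi}=\br{\sigma^N_0}{\varphi}$, and use that the Bochner integral commutes with evaluation at $\varphi\in\cht$ to rewrite the drift term as $\bigl(\int_0^t\left(J^N_s\right)^*\sigma^N_s\,ds\bigr)(\varphi)$. Since $\sigma^N_t$, $\sigma^N_0$, and $\sqrt{N}M^N_t$ all belong to $\chfp\hookrightarrow\chtp$ (by Proposition \ref{prop:ext} and Theorem \ref{thm:bdUnifMgN}), and two elements of $\chtp$ agreeing on all $\varphi\in\cht$ coincide, this yields \eqref{eq:bochN} as an identity in $\chtp$. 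Finally, $\sqrt{N}M^N_t$ is a c\`adl\`ag martingale in $\chfp$ by Theorem \ref{thm:bdUnifMgN}, hence a c\`adl\`ag martingale in $\chtp$ through the continuous embedding $\chfp\hookrightarrow\chtp$; together with the predictable finite-variation Bochner integral term this exhibits $\sigma^N_t$ as a $\chtp$-valued semimartingale with \eqref{eq:bochN} as its Doob--Meyer decomposition.

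The step I expect to be the main obstacle is the first one: because $J^N_s$ fails to preserve the Hilbert spaces $\chii$, one cannot argue directly that $\left(J^N_s\right)^*$ maps $\chfp$ into $\chtp$, and the detour through the Banach space $\cct$ — where \eqref{eq:bdDrifts} supplies the needed boundedness — is exactly what makes the estimate go through; this is the reason the Banach spaces $\cci$ were interleaved in the chain \eqref{eq:normIneq}. A secondary point requiring some care is the joint measurability in $(s,\omega)$ of the integrand needed to apply Pettis's theorem, which follows from the path regularity of $\nu^N_s$, $\nu_s$, and $\sigma^N_s$ by routine arguments.
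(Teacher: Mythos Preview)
Your proposal is correct and follows essentially the same route as the paper: the key estimate $\left|\br{\sigma^N_s}{J^N_s\varphi}\right|\leq C\norm{\sigma^N_s}_\cctp\norm\varphi_\cht$ via the boundedness of $J^N_s$ on $\cct$ and the embedding $\cht\hookrightarrow\cct$, followed by Lemma \ref{lem:bdSigmaNLoc} for finiteness and a Bochner-integrability check, is exactly what the paper does. Your version is in fact slightly more explicit (invoking Pettis's theorem for strong measurability and spelling out the identification of \eqref{eq:bochN} by duality), whereas the paper dispatches these points in a sentence each.
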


\begin{proof}
  By Theorem \ref{thm:bdUnifMgN} and the embedding $\chfp\hookrightarrow\chtp$,
  $\sqrt{N}M^N_t$ is an $\chtp$-valued martingale. Thus, by \eqref{eq:formSigmaN}, the
  only thing we need to show is that the integral term makes sense as a Bochner integral
  in $\chtp$. The first step in doing this is to show that
  $\left(J^N_s\right)^*\sigma^N_s\in\chtp$ for all $s\in[0,T]$. That is, we need to show
  that there is a $C>0$ such that
  \begin{equation}
    \left|\br{\sigma^N_s}{J^N_s\varphi}\right|\leq C\norm{\varphi}_\cht\label{eq:bdJNSigmaN}
  \end{equation}
  for all $\varphi\in\cht$. Observe that by \eqref{eq:bdDrifts} and the embedding
  $\cht\hookrightarrow\cct$, $J^N_s\varphi\in\cct$ for $\varphi\in\cht$, and thus
  \[\left|\br{\sigma^N_s}{J^N_s\varphi}\right|
  \leq\norm{\sigma^N_s}_\cctp\norm{J^N_s\varphi}_\cct \leq
  C\norm{\sigma^N_s}_\cctp\norm{\varphi}_\cct \leq
  C\norm{\sigma^N_s}_\cctp\norm{\varphi}_\cht\] for such a function $\varphi$ by
  \eqref{eq:normIneq}, so \eqref{eq:bdJNSigmaN} holds almost surely by Lemma
  \ref{lem:bdSigmaNLoc} and \eqref{eq:normIneqD}.

  To see that the Bochner integral is (almost surely) well defined, we recall (see Section
  V.5 in \citet{yosida}) that it is enough to prove that: (i) given any function $F$ in
  the dual of $\chtp$, the mapping $s\longmapsto F\big(\big(J^N_s\big)^*\sigma^N_s\big)$
  is measurable; and (ii)
  $\int_0^T\!\big\|\big(J^N_s\big)^*\sigma^N_s\big\|_\chtp\,ds<\infty$. (i) is satisfied
  by the continuity assumptions on the parameters and (ii) follows from
  \eqref{eq:bdJNSigmaN}, using the fact that the constant $C$ there can be chosen
  uniformly in $s$.
\end{proof}

We omit the proof of the following corollary (see Corollary 3.8 of \citet{meleard}):

\begin{cor}
  For any $N>0$, the process $\sigma^N_t$ has paths in $D([0,T],\chtp)$.
\end{cor}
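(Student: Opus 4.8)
The plan is to read the statement off the semimartingale decomposition
\[\sigma^N_t=\sigma^N_0+\sqrt{N}M^N_t+\int_0^t\!\left(J^N_s\right)^*\sigma^N_s\,ds\]
established in Proposition \ref{prop:bochN} (viewed as an identity of $\chtp$-valued processes), by checking separately that each of the three summands has c\`adl\`ag paths in $\chtp$ and then using that $D([0,T],\chtp)$ is stable under addition: in any normed space, the sum of a constant path, a c\`adl\`ag path, and a continuous path is c\`adl\`ag. The constant term $\sigma^N_0$ is immediate.

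For the martingale term I would invoke Theorem \ref{thm:bdUnifMgN}, which already asserts that $\sqrt{N}M^N_\cdot$ is a c\`adl\`ag martingale in $\chfp$, that is, $\sqrt{N}M^N_\cdot\in D([0,T],\chfp)$. Since the chain of embeddings $\chfp\hookrightarrow\cctp\hookrightarrow\chtp$ in \eqref{eq:normIneqD} is continuous, so that $\norm{\cdot}_\chtp\leq C\norm{\cdot}_\chfp$, right-continuity and the existence of left limits transfer from $\chfp$ to $\chtp$; hence $\sqrt{N}M^N_\cdot\in D([0,T],\chtp)$ as well.

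For the drift term, recall that in the proof of Proposition \ref{prop:bochN} it was shown that $\left(J^N_s\right)^*\sigma^N_s\in\chtp$ for each $s\in[0,T]$ and that $\int_0^T\!\norm{\left(J^N_s\right)^*\sigma^N_s}_\chtp\,ds<\infty$ almost surely. I would then use the standard continuity property of the Bochner integral (Section V.5 of \citet{yosida}): for $0\leq t\leq t+h\leq T$,
\[\norm{\int_0^{t+h}\!\left(J^N_s\right)^*\sigma^N_s\,ds-\int_0^t\!\left(J^N_s\right)^*\sigma^N_s\,ds}_\chtp\leq\int_t^{t+h}\!\norm{\left(J^N_s\right)^*\sigma^N_s}_\chtp\,ds,\]
and the right-hand side tends to $0$ as $h\downarrow0$ (and similarly for left increments) by absolute continuity of the integral, so the drift term has continuous paths in $\chtp$. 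Adding up the three contributions yields $\sigma^N_\cdot\in D([0,T],\chtp)$, as claimed.

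The main (and essentially the only) non-routine input here is the c\`adl\`ag regularity of the Hilbert-space-valued martingale $\sqrt{N}M^N_\cdot$ in $\chfp$, which is Theorem \ref{thm:bdUnifMgN}; its proof in turn rests on Doob's inequality applied coordinate-wise along a complete orthonormal basis of $\chf$, together with a regularization argument along the lines of Corollary 3.8 of \citet{meleard}. The only other point to keep in mind is that the decomposition is used simultaneously for all $t$ on an event of full probability, which is legitimate since \eqref{eq:mom2} together with \eqref{assum:rho:h} bounds $\norm{\nu^N_t}_\chfp$ uniformly in $t\in[0,T]$ almost surely, so that $\sigma^N_t\in\chfp\hookrightarrow\chtp$ for every $t$ on that event. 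In fact one may simply reproduce the argument of Corollary 3.8 of \citet{meleard} verbatim.
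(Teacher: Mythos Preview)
Your proposal is correct and matches the paper's approach: the paper omits the proof entirely and simply refers to Corollary 3.8 of \citet{meleard}, which is precisely the argument you have spelled out (and which you explicitly acknowledge at the end). Your decomposition via Proposition \ref{prop:bochN}, together with the c\`adl\`ag regularity of $\sqrt{N}M^N_\cdot$ in $\chfp$ from Theorem \ref{thm:bdUnifMgN} and the continuity of the Bochner integral, is exactly the intended route.
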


\subsubsection{Uniform estimate for \texorpdfstring{$\sigma^N_t$}{\textbackslash
    sigma\textcircumflex N\textunderscore t} on
  \texorpdfstring{$\ccdp$}{C\textunderscore2'}}

Having given sense to equation \eqref{eq:bochN} in $\chtp$, we can now give a uniform
estimate for $\sigma^N_t$ in $\ccdp$. This will be crucial for obtaining the tightness of
$\sigma^N_t$ in the proof of Theorem \ref{thm:clt}.

\begin{teosec}\label{thm:bdUnifSigmaN}
  \[\sup_{N>0}\sup_{t\in[0,T]}\ee\!\left(\left\|\sigma^N_t\right\|^2_\ccdp\right)<\infty.\]
\end{teosec}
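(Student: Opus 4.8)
The plan is to run a Gronwall estimate on $g^N(t):=\ee\big(\norm{\sigma^N_t}^2_\ccdp\big)$, starting from the evolution equation \eqref{eq:formSigmaN}. That identity was obtained via Proposition \ref{prop:ext} for every $\varphi\in\chf$; moreover, since the explicit Poisson construction of $\nu^N_t$ makes all the integrals appearing in \eqref{eq:formSigmaN} converge for every $\varphi\in\chf$ on one event of full probability (that of Proposition \ref{prop:bdMoments}), \eqref{eq:formSigmaN} holds there \emph{simultaneously} for all $\varphi$, in particular for all $\varphi$ in the unit ball of $\ccd$ (recall $\ccd\hookrightarrow\cht\hookrightarrow\cct\hookrightarrow\chf$, and that $\sigma^N_t\in\chfp$ a.s.\ already puts $\sigma^N_t$ in $\ccdp$). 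On this event I would estimate the three terms on the right of \eqref{eq:formSigmaN} for such $\varphi$: the initial term by $\norm{\sqrt N(\nu^N_0-\nu_0)}_\ccdp$; the martingale term by $\norm{\sqrt N M^N_t}_\ccdp\le C\norm{\sqrt N M^N_t}_\chfp$, again using $\ccd\hookrightarrow\chf$; and, using the boundedness of $J^N_s$ on $\ccd$ from \eqref{eq:bdDrifts} (valid because $J^N_s=J_{\nu^N_s,\nu_s}$ and Proposition \ref{prop:bdMoments} supplies the required moment bounds), the drift term by $\int_0^t|\br{\sigma^N_s}{J^N_s\varphi}|\,ds\le C\int_0^t\norm{\sigma^N_s}_\ccdp\norm\varphi_\ccd\,ds$, with $C$ independent of $N$ and $s$.

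Taking the supremum over $\norm\varphi_\ccd\le1$ then gives, almost surely,
\[\norm{\sigma^N_t}_\ccdp\le\norm{\sqrt N(\nu^N_0-\nu_0)}_\ccdp+C\norm{\sqrt N M^N_t}_\chfp+C\int_0^t\norm{\sigma^N_s}_\ccdp\,ds.\]
Squaring, bounding the time integral by Cauchy--Schwarz, and taking expectations, I obtain $g^N(t)\le C_1+C_2\int_0^t g^N(s)\,ds$ with $C_1,C_2$ independent of $N$: the first term is bounded uniformly in $N$ by assumption \eqref{eq:iniFluc}, and the second by $\ee\big(\sup_{t\in[0,T]}\norm{\sqrt N M^N_t}^2_\chfp\big)$, which is bounded uniformly in $N$ by Theorem \ref{thm:bdUnifMgN}. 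Gronwall's Lemma then yields $g^N(t)\le C_1e^{C_2T}$ for all $t\in[0,T]$ and all $N$, which is the assertion.

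The one delicate point — and the main obstacle — is that Gronwall's Lemma requires $g^N(t)<\infty$ a priori, whereas Lemma \ref{lem:bdSigmaNLoc} only controls the first moment of $\norm{\sigma^N_t}_\chfp$. To remedy this I would localize. For fixed $N$ the process $\nu^N_t$ has, almost surely, finitely many jumps on $[0,T]$ (the jump rates being bounded by $\olambda$); between jumps \eqref{eq:formSigmaN} shows that $\frac{d}{dt}\norm{\sigma^N_t}_\ccdp\le C\norm{\sigma^N_t}_\ccdp$, so $\norm{\sigma^N_t}_\ccdp$ grows at most exponentially there, while at a jump its increment is at most $\frac{C}{\sqrt N}\big(\rho_2(w_1')+\rho_2(w_2')+\rho_2(w_1)+\rho_2(w_2)\big)<\infty$, using \eqref{assum:rho:c} to get $\norm{\delta_w}_\ccdp\le C\rho_2(w)$. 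Hence $\sup_{t\in[0,T]}\norm{\sigma^N_t}_\ccdp<\infty$ almost surely, so the stopping times $\tau_n=\inf\{t\ge0:\norm{\sigma^N_t}_\ccdp\ge n\}\wedge T$ increase to $T$. Applying the estimate above to the stopped process $\sigma^N_{\cdot\wedge\tau_n}$ (optional stopping preserves the martingale property of $\sqrt NM^N$, and the bound from Theorem \ref{thm:bdUnifMgN} is unaffected), $\ee\big(\norm{\sigma^N_{t\wedge\tau_n}}^2_\ccdp\big)$ is finite and obeys the same integral inequality with the same $N$- and $n$-independent constants; Gronwall then gives a bound independent of $n$, and letting $n\to\infty$ with Fatou's Lemma completes the proof.
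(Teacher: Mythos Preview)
Your argument is essentially the paper's: bound the three pieces of the decomposition \eqref{eq:formSigmaN} via \eqref{eq:iniFluc}, Theorem~\ref{thm:bdUnifMgN}, and the operator bound \eqref{eq:bdDrifts}, then close with Gronwall. The paper carries this out through the Bochner form \eqref{eq:bochN} in $\chtp$ and the embedding $\chtp\hookrightarrow\ccdp$ rather than by testing against $\varphi$ in the unit ball of $\ccd$, but that difference is cosmetic; your localization paragraph is extra care that the paper simply omits.

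One correction in that localization: between jumps of $\nu^N$ the martingale $M^{N,\varphi}$ is \emph{not} constant --- its compensator drift persists --- so \eqref{eq:formSigmaN} does not directly give $\tfrac{d}{dt}\norm{\sigma^N_t}_\ccdp\le C\norm{\sigma^N_t}_\ccdp$ there. The a.s.\ finiteness you actually need, $\sup_{t\in[0,T]}\norm{\sigma^N_t}_\ccdp<\infty$, follows more simply: by \eqref{assum:rho:c} one has $\norm{\delta_w}_\ccdp\le C\rho_2(w)$, hence $\norm{\sigma^N_t}_\ccdp\le C\sqrt{N}\big(\br{\nu^N_t}{\rho_2}+\br{\nu_t}{\rho_2}\big)$, and the right side is a.s.\ bounded on $[0,T]$ by Proposition~\ref{prop:bdMoments}. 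With that in hand your stopping-time argument and the Fatou step go through unchanged.
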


\begin{proof}
  By \eqref{eq:bochN} and the embedding $\chtp\hookrightarrow\ccdp$,
  \begin{equation}
    \ee\!\left(\left\|\sigma^N_t\right\|^2_\ccdp\right)
    \leq2\ee\!\left(\left\|\sigma^N_0\right\|^2_\ccdp\right)
    +2\ee\!\left(\left\|\sqrt{N}M^N_t\right\|^2_\ccdp\right)
    +2\ee\!\left(\left\|\int_0^t\!\left(J^N_s\right)^*\sigma^N_s
        \,ds\right\|^2_\ccdp\right).
  \end{equation}
  The first expectation on the right side is bounded uniformly in $N$ by
  \eqref{eq:iniFluc}, and the same holds for the second one by \eqref{eq:normIneqD} and
  Theorem \ref{thm:bdUnifMgN}. For the last expectation we have
  \begin{multline}
    \ee\!\left(\left\|\int_0^t\!\left(J^N_s\right)^*\sigma^N_s \,ds\right\|^2_\ccdp\right)
    \leq\ee\!\left(\left[\int_0^t\!
        \left\|\left(J^N_s\right)^*\sigma^N_s\right\|_\ccdp ds\right]^2\right)\\
    \leq
    T\int_0^t\!\ee\!\left(\left\|\left(J^N_s\right)^*\!\sigma^N_s\right\|^2_\ccdp\right)\,ds
    \leq CT\int_0^T\!\ee\!\left(\sup_{s\in[0,t]}\left\|\sigma^N_s\right\|^2_\ccdp\right)dt,
  \end{multline}
  where we used Corollary V.5.1 of \citet{yosida} in the first inequality and
  \eqref{eq:bdDrifts} in the last one.  Thus by Gronwall's Lemma we get
  $\ee\!\left(\sup_{t\in[0,T]}\left\|\sigma^N_t\right\|^2_\ccdp\right)\leq C_1e^{C_2T}$,
  uniformly in $N$, and the result follows.
\end{proof}

\subsubsection{Proof of the theorem}

We are finally ready to prove Theorem \ref{thm:clt}.

\begin{proof}[Proof of Theorem \ref{thm:clt}]
  As before, we will proceed in several steps.

  \pparagraph{Step 1.} Our first goal is to show that the sequence of processes
  $\sigma^N_t$ is tight in $D([0,T],\chup)$. By Aldous' criterion (which we take from
  Theorem 2.2.2 in \citet{joffeMetiv} and the corollary that precedes it in page 34), we
  need to prove that the following two conditions hold:
  \begin{subequations}
    \begin{enumerate}[label=(t\arabic*),topsep=2pt,itemsep=-8pt]
    \item\label{wkC1} For every rational $t\in[0,T]$ and every $\ep>0$, there is a compact
      $K\subseteq \chup$ such that
      \[\sup_{N>0}\pp\!\left(\sigma^N_t\notin K\right)\leq\ep.\]
    \item\label{wkC2} If $\mathfrak T^N_T$ is the collection of stopping times with
      respect to the natural filtration associated to $\sigma^N_t$ that are almost surely
      bounded by $T$, then for every $\ep>0$,
      \[\lim_{r\rightarrow0}\limsup_{N\rightarrow\infty}
      \sup_{\substack{s<r\\\tau\in\mathfrak
          T^N_T}}\pp\big(\norm{\sigma^N_{\tau+s}-\sigma^N_\tau}_\chup>\ep\big)=0.\]
    \end{enumerate}
  \end{subequations}

  Observe that since the embedding of $\chdp$ into $\chup$ is compact, \ref{wkC1} will
  follow once we show that for any $\ep>0$ and $t\in[0,T]$ there is an $L>0$ such that
  $\sup_{N>0}\pp\!\left(\left\|\sigma^N_t\right\|_\chdp>L\right)<\ep$. This follows
  directly from Markov's inequality, \eqref{eq:normIneqD}, and Theorem
  \ref{thm:bdUnifSigmaN}, since given any $\ep>0$,
  \[\sup_{N>0}\pp\!\left(\left\|\sigma^N_t\right\|_\chdp>L\right)
  \leq\frac{1}{L^2}\sup_{N>0}\ee\!\left(\left\|\sigma^N_t\right\|^2_\chdp\right)
  \leq\frac{1}{L^2}\sup_{N>0}\ee\!\left(\left\|\sigma^N_t\right\|^2_\ccdp\right) <\ep\]
  for large enough $L$.

  To obtain \ref{wkC2} we will use the semimartingale decomposition of $\sigma^N_t$
  in $\chtp$ given in Proposition \ref{prop:bochN}, i.e.,
  $\sigma^N_t=\sigma^N_0+\sqrt{N}M^N_t+\int_0^t\!\left(J^N_s\right)^*\!\sigma^N_s\,ds$.
  By Rebolledo's criterion (see Corollary 2.3.3 in \citet{joffeMetiv}), \ref{wkC2}
  is obtained for the martingale term $\sqrt{N}M^N_t$ if it is proved for the trace of its
  Doob--Meyer process $\cramped{\big\langle\!\!\big\langle
      \sqrt{N}M^N\big\rangle\!\!\big\rangle_t}$ in $\chu$, and thus for $\sigma^N_t$ if
  it is proved moreover for the finite variation term
  $\int_0^t\!\left(J^N_s\right)^*\!\sigma^N_s\,ds$ ($\sigma^N_0$ is tight by hypothesis).

  We start with the martingale part. Let $\tau$ be a stopping time bounded by $T$ and let
  $s>0$.  Let $(\phi_k)_{k\geq0}$ be an orthonormal complete basis of $\chu$. Using
  the same calculations as in the proof of Theorem \ref{thm:bdUnifMgN} we get
  \begin{align*}
    \ee\Big(\Big|\text{tr}_\chu\big\langle&\!\!\big\langle
    \sqrt{N}M^N\big\rangle\!\!\big\rangle_{\tau+s}
    -\text{tr}_\chu\big\langle\!\!\big\langle
    \sqrt{N}M^N\big\rangle\!\!\big\rangle_\tau\Big|\Big)\\
    &=\ee\bigg(\int_\tau^{\tau+s}\!\int_W\!\int_W\!\int_W\!\int_{W\!\times\!W}\!\sum_{k\geq0}
    \big(\phi_k(w_1')-\phi_k(w_1)+\phi_k(w_2')-\phi_k(w_2)\big)^2\\
    &\hspace{1.4in}\cdot\Lambda(w_1,w_2,z,dw_1'\otimes
    dw_2')\,\nu^N_s(dz)\,\nu^N_s(dw_2)\,\nu^N_s(dw_1)\bigg)\\
    &\leq Cs,
  \end{align*}
  uniformly in $N$. Thus by Markov's inequality,
  \[\pp\Big(\Big|\text{tr}_\chu\big\langle\!\!\big\langle
  \sqrt{N}M^N\big\rangle\!\!\big\rangle_t -\text{tr}_\chu\big\langle\!\!\big\langle
  \sqrt{N}M^N\big\rangle\!\!\big\rangle_t\Big|>\ep\Big) \leq\frac{1}{\ep}Cs,\] whence
  \ref{wkC2} follows for the martingale term.

  For the integral term we have that
  \begin{multline}
    \ee\bigg(\bigg\| \int_0^{\tau+s}\!\left(J^N_r\right)^*\!\sigma^N_r\,dr
    -\int_0^\tau\!\left(J^N_r\right)^*\!\sigma^N_r\,dr\bigg\|_\chup\bigg)
    \leq\ee\!\left(\int_\tau^{\tau+s}\!
      \left\|\left(J^N_r\right)^*\!\sigma^N_r\right\|_\ccdp\,dr\right)\\
    \leq C\int_\tau^{\tau+s}\!\ee\!\left(\left\|\sigma^N_r\right\|_\ccdp\right)dr \leq
    Cs\sup_{r\in[0,T]}\sqrt{\ee\!\left(\left\|\sigma^N_r\right\|^2_\ccdp\right)}
  \end{multline}
  for some $C>0$, uniformly in $N$, where we used Corollary V.5.1 of \citet{yosida} as
  before and \eqref{eq:normIneqD} in the first inequality and \eqref{eq:bdDrifts} in the
  second one. Using Markov's inequality as before and Theorem \ref{thm:bdUnifSigmaN} we
  obtain \ref{wkC2} for the integral term.
   
  \pparagraph{Step 2.} We have now that every subsequence of $\sigma^N_t$ has a further
  subsequence which converges in distribution in $D([0,T],\chup)$.  Consider a convergent
  subsequence of $\sigma^N_t$, which we will still denote by $\sigma^N_t$, and let
  $\sigma_t$ be its limit in $D([0,T],\chup)$. Observe that the only jumps of $\sigma^N_t$
  are those coming from $\nu^N_t$ and, with probability 1, at most two agents jump at the
  same time.  Suppose that there is a jump at time $t$, involving agents $i$ and $j$. Then
  given $\varphi\in\chu$,
  \begin{align}
    \left|\br{\sigma^N_t}{\varphi}-\br{\sigma^N_{t-}}{\varphi}\right|
    &=\frac{1}{\sqrt{N}}\left|\varphi(\eta^N_t(i))+\varphi(\eta^N_t(j))
      -\varphi(\eta^N_{t-}(i))-\varphi(\eta^N_{t-}(j))\right|\\
    &\leq\frac{C}{\sqrt{N}}\norm\varphi_\chu
    \left[\sup_{r\in[0,t]}\rho_1(\eta^N_r(i))+\sup_{r\in[0,t]}\rho_1(\eta^N_r(j))\right]
  \end{align}
  by \eqref{assum:rho:h}. We deduce by \eqref{eq:mom2EtaN} that
  \begin{equation}\label{eq:strContClt}
    \ee\!\left(\sup_{s\in[0,t]}
      \left\|\sigma^N_s-\sigma^N_{s-}\right\|^2_\chup\right)\leq\frac{C}{N}
  \end{equation}
  and hence $\sup_{s\in[0,t]}\left\|\sigma^N_s-\sigma^N_{s-}\right\|_\chup$ converges in
  probability to 0 as $N\rightarrow\infty$. Therefore, $\sigma_t$ is almost surely
  strongly continuous by Proposition 3.26 of \cite{jacodShir}. That is, we have shown that
  every limit point of $\sigma^N_t$ is (almost surely) in $C([0,T],\chup)$.

  \pparagraph{Step 3.} Our next goal is to prove that the sequence of martingales
  $\sqrt{N}M^N_t$ converges in distribution in $D([0,T],\chup)$ to the centered Gaussian
  process $Z_t$ defined in the statement of the theorem.  That is, we need to show that
  given any $\varphi_1,\varphi_2\in\chu$, the sequence of $\rr^2$-valued martingales
  $\sqrt{N}M^{N,(\varphi_1,\varphi_2)}_t=\left(\sqrt{N}M^{N,\varphi_1}_t,\sqrt{N}M^{N,\varphi_2}_t\right)$
  converges in distribution to $(Z_t(\varphi_1),Z_t(\varphi_2))$.

  By \eqref{eq:bochN}, $\sqrt{N}M^N_t$ and $\sigma^N_t$ have the same jumps, and thus
  \eqref{eq:strContClt} implies that
  \begin{equation}
  \ee\!\left(\sup_{s\in[0,t]}\left|\sqrt{N}M^{N,(\varphi_1,\varphi_2)}_s
      -\sqrt{N}M^{N,(\varphi_1,\varphi_2)}_{s-}\right|^2\right)
  \xrightarrow[N\rightarrow\infty]{}0.\label{eq:mgJumps}
  \end{equation}
  On the other hand, we claim that for every $\varphi_1,\varphi_2\in\chu$,
  \begin{equation}
    \lim_{N\to\infty}\ee\!\left(\Big<\sqrt{N}M^{N,\varphi_1},\sqrt{N}M^{N,\varphi_2}\Big>_t\right)
    =\int_0^t\!C^{\varphi_1,\varphi_2}_s\,ds.\label{eq:cvQC}
  \end{equation}
  \eqref{eq:mgJumps} and \eqref{eq:cvQC} imply that
  $\sqrt{N}M^{N,(\varphi_1,\varphi_2)}_t$ satisfies the hypotheses of the Martingale
  Central Limit Theorem (see Theorem VII.1.4 in \citet{ethKur}) so, assuming that
  \eqref{eq:cvQC} holds, we get that $\sqrt{N}M^{N,(\varphi_1,\varphi_2)}_t$ converges in distribution in $D([0,T],\rr^2)$ to
  $(Z_t(\varphi_1),Z_t(\varphi_2))$.

  To prove \eqref{eq:cvQC} it is enough to consider the case
  $\varphi_1=\varphi_2=\varphi$, the general case follows by polarization. Given
  $\mu\in D([0,T],\chup)$ let
  \begin{multline}
    \Psi_t(\mu)=\int_0^t\!\int_W\!\int_W\!\int_W\!\int_{W\!\times\!W}\!
    \difd^2\,\Lambda(w_1,w_2,z,dw_1'\ootimes dw_2')\\
    \cdot\mu_s(dz)\,\mu_s(dw_2)\,\mu_s(dw_1)\,ds.
  \end{multline}
  Then we need to prove that $\lim_{N\to\infty}\ee(\Psi_t(\nu^N))=\Psi_t(\nu)$. Let $p>1$
  be the exponent we assumed to be such that $\rho^p_1\leq C\rho_4$ for some $C>0$.. Repeating the
  calculations in the proof of Theorem \ref{thm:bdUnifMgN} and using Jensen's inequality we get that
  \[|\Psi_t(\nu^N)|^p\leq\Big[C_1t\norm\varphi^2_\chu\sup_{s\in[0,t]}\br{\nu^N_s}{\rho^2_1}\Big]^p
  \leq C_2t^p\norm\varphi^{2p}_\chu\sup_{s\in[0,t]}\br{\nu^N_s}{\rho^2_4}.\]
  Thus Proposition \ref{prop:bdMoments} implies that the sequence
  $\big(\Psi_t(\nu^N)\big)_{N>0}$ is uniformly integrable, whence we deduce the desired
  convergence.
  
  \pparagraph{Step 4.} As in Step 2, let $\sigma_t$ be a limit point of
  $\sigma^N_t$. Observe that by the embedding $\chup\hookrightarrow\cczp$, $\sigma^N_t$
  converges in distribution to $\sigma_t$ in $D([0,T],\cczp$). We want to prove now that
  $\sigma_t$ satisfies \eqref{eq:cltWeak}.

  Fix $\varphi\in\ccz$. By \eqref{eq:bochN},
  \begin{equation}
    \begin{aligned}
      \br{\sigma_t}{\varphi}-&
      \br{\sigma_0}{\varphi}-\int_0^t\!\br{\sigma_s}{J_s\varphi}ds
      -Z_t(\varphi)\\
      &=\left[\sqrt{N}M^{N,\varphi}_t-Z_t(\varphi)\right]
      +\left[\br{\sigma_t}{\varphi}-\br{\sigma^N_t}{\varphi}\right]
      +\left[\br{\sigma^N_0}{\varphi}-\br{\sigma_0}{\varphi}\right]\\
      &\qquad+\int_0^t\!\left[\br{\sigma^N_s}{J^N_s\varphi}-\br{\sigma^N_s}{J_s\varphi}\right]ds
      +\int_0^t\!\left[\br{\sigma^N_s}{J_s\varphi}-\br{\sigma_s}{J_s\varphi}\right]ds,
    \end{aligned}\label{eq:sigmaSigmaN}
  \end{equation}
  so we need to show that the right side converges in distribution to 0 as
  $N\rightarrow\infty$. The first term goes to 0 by the previous step. The next two go to
  0 because $\sigma_t$ is a limit point of $\sigma^N_t$ and, since $J_s\varphi\in\ccz$,
  the last term goes to 0 for the same reason.

  To show that the remaining term in \eqref{eq:sigmaSigmaN} also goes to 0 in
  distribution, it is enough to show that
  \begin{equation}
    \ee\!\left(\left|\int_0^t\!\br{\sigma^N_s}{\left(J^N_s-J_s\right)\varphi}ds
      \right|\right)\xrightarrow[N\rightarrow\infty]{}0.\label{eq:JNJTo0}
  \end{equation}
  Since, by \eqref{eq:bdDrifts}, $J^N_s-J_s$ maps $\ccz$ into itself, we get by
  using \eqref{eq:normIneqD} and \eqref{eq:lipDrifts} that
  \begin{align}
    \left|\br{\sigma^N_s}{\left(J^N_s-J_s\right)\varphi}\right|
  &\leq\norm{\sigma^N_s}_\cczp\norm{\left(J^N_s-J_s\right)\varphi}_\ccz
  \leq C\norm{\sigma^N_s}_\ccdp\norm{\varphi}_\ccz\norm{\nu^N_s-\nu_s}_\ccdp\\
  &=\frac{C}{\sqrt{N}}\norm{\varphi}_\ccz\norm{\sigma^N_s}_\ccdp^2.
  \end{align}
  \eqref{eq:JNJTo0} now follows from this bound and Theorem \ref{thm:bdUnifSigmaN}.

  \pparagraph{Step 5.}  We have shown in Step 4 that if $\sigma_t$ is any accumulation
  point of $\sigma^N_t$, then $\sigma_t$ satisfies \eqref{eq:cltWeak} for every
  $\varphi\in\ccz$. To see that the limit points of $\sigma^N_t$ actually solve
  \eqref{eq:clt}, the only thing left to show is that the integral term in \eqref{eq:clt}
  makes sense as a Bochner integral in $\cczp$. This can be verified by repeating the
  arguments of the proof of Proposition \ref{prop:bochN}.

  \pparagraph{Step 6.} We want to prove now pathwise uniqueness for the solutions of
  \eqref{eq:clt}. Fix a centered Gaussian process $Z_t$ in $\cczp$ with the right
  covariance structure and suppose that $\sigma^1_t, \sigma^2_t\in\ccz$ are two solutions of
  \eqref{eq:clt} for this choice of $Z_t$. Then
  $\sigma^1_t-\sigma^2_t=\int_0^t\!\left(J_s^*\sigma^1_s-J_s^*\sigma^2_s\right)ds$,
  so
  \[\sup_{t\in[0,T]}\left\|\sigma^1_t-\sigma^2_t\right\|_\cczp
  \leq\int_0^T\!\sup_{s\in[0,t]}\norm{J_s^*\left(\sigma^1_s-\sigma^2_s\right)}_\cczp\,dt.\]
  By \eqref{eq:bdDrifts}, $J_s$ is a bounded operator on $\ccz$, and thus
  so is $J_s^*$ as an operator on $\cczp$. Moreover, $\norm{J_s^*}_\cczp$ can
  be bounded uniformly in $s$.  Thus
  \[\ee\!\left(\sup_{t\in[0,T]}\left\|\sigma^1_t-\sigma^2_t\right\|_\cczp\right)
  \leq C\int_0^t\!\ee\!\left(\sup_{s\in[0,t]}\|\sigma^1_s-\sigma^2_s\|_\cczp\right)dt,\]
  and Gronwall's Lemma implies that $\sigma^1_t=\sigma^2_t$ for all $t\in[0,T]$ almost
  surely, so the pathwise uniqueness for \eqref{eq:clt} follows.

  \pparagraph{Step 7.}  We have now that any accumulation point $\sigma_t$ of the sequence
  $\sigma^N_t$ satisfies equation \eqref{eq:clt}, which has a unique pathwise
  solution. The last thing left to show is the uniqueness in law for the solutions of this
  equation. Since we have pathwise uniqueness, this can be obtained by adapting the
  Yamada--Watanabe Theorem to our setting (see Theorem IX.1.7 of \citet{revYor}). The proof
  works in the same way assuming we can construct regular conditional probabilities in
  $D([0,T],\cczp)$, which is possible in any complete metric space (see Theorem I.4.12 of
  \citet{durrett}). This (together with the embedding $\chup\hookrightarrow\cczp$) implies
  that \eqref{eq:clt} determines a unique process in $C([0,T],\chup)$.
\end{proof}

\subsection{Proof of Theorems \ref{thm:iniProd} and
  \ref{thm:type:finite}-\ref{thm:type:rrd}}

\begin{proof}[Proof of Theorem \ref{thm:iniProd}]
  There are three conditions to check. The first one, $\sigma^N_0\Longrightarrow\sigma_0$
  in $\chup$, follows directly from applying the Central Limit Theorem in $\rr$ to each of
  the processes $\br{\sigma^N_0}{\varphi}$ for $\varphi\in\chu$, while the condition
  $\sup_{N>0}\ee\!\left(\br{\nu^N_0}{\rho^2_4}\right)<\infty$ is straightforward.
  For the remaining one we can prove something stronger, namely that 
  $\sup_{N>0}\ee\!\left(\norm{\sigma^N_0}^2_\chfp\right)<\infty$. 
  In fact, if $(\phi_k)_{k\geq0}$ is a complete orthonormal basis of $\chf$ and $\eta^N_0$
  is chosen by picking the type $\eta^N_0(i)$ of each agent $i\in I_N$ independently
  according to $\nu_0$ then
  \[\ee\!\left(\norm{\sigma^N_0}^2_\chfp\right)
  =\ee\bigg(\sum_{k\geq0}\br{\sigma^N_0}{\phi_k}^2\bigg)
  =\frac{1}{N}\sum_{k\geq0}\ee\bigg(\bigg[\sum_{i=1}^N\left[\phi_k(\eta^N_0(i))
        -\br{\nu_0}{\phi_k}\right]\bigg]^2\bigg).\] A simple computation and
  \eqref{assum:rho:h} (see the proof of Proposition 3.5 in \citet{meleard}) show that this is
  bounded by $\ee\!\left(\br{\nu^N_0}{\rho^2_4}\right)+\br{\nu_0}{\rho^2_4}$, which is in turn
  bounded by some $C<\infty$ uniformly in $N$, so the result follows.
\end{proof}

For Theorems \ref{thm:type:finite} (finite $W$), \ref{thm:type:omega}
($W=\Omega\subseteq\rr^d$ smooth and compact), and \ref{thm:type:rrd} ($W=\rr^d$), we
already explained why the assumptions of Theorem \ref{thm:clt} hold, so the results follow
directly from that theorem (together with \eqref{eq:finDim} when $W$ is finite). We are
left with the case $W=\zz^d$.

\begin{proof}[Proof of Theorem \ref{thm:type:zzd}]
  Let $\varphi\in\ell^\infty(\zz^d)$. Then
  \[\norm\varphi^2_{2,D}=\sum_{x\in\zz^d}\frac{\varphi(x)^2}{1+|x|^{2D}}
  \leq C\norm\varphi^2_\infty,\] where we used the fact that $2D>d$ implies that
  $\sum_{x\in\zz^d}(1+|x|^{2D})^{-1}<\infty$. This gives the embedding
  $\ell^\infty(\zz^d)\hookrightarrow\ell^{2,D}(\zz^d)$. The other continuous embeddings in
  \eqref{eq:embZzd} are similar. To see that the embedding
  $\ell^{2,D}(\zz^d)\hookrightarrow\ell^{2,2D}$ is compact, observe that the family
  $(e_y)_{y\in\zz^d}\subseteq\ell^{2,D}(\zz^d)$ defined by
  $e_y(x)=\sqrt{1+|x|^{2D}}\uno{x=y}$ defines an orthonormal complete basis of
  $\ell^{2,D}(\zz^d)$ and, using the same fact as above,
  \[\sum_{y\in\zz^d}\|e_y\|^2_{2,2D}=\sum_{y\in\zz^d}\frac{1+|y|^{2D}}{1+|y|^{4D}}<\infty,\]
  so the embedding is Hilbert--Schmidt, and hence compact. \eqref{assum:rho:h}
  and \eqref{assum:rho:c} follow directly from the definition of the spaces in this case.

  \eqref{assum:2clt:1} and \eqref{assum:2clt:2} for $\rho^2_4$ are precisely what is
  assumed in Theorem \ref{thm:type:zzd}, and using this and Jensen's inequality we get the
  same estimates for $\rho^2_1$, $\rho^2_2$, and $\rho^2_3$. We are left checking
  \eqref{assum:2clt:3}. For simplicity we will assume here that $\Lambda\equiv0$. For
  \eqref{assum:2clt:3i}, the case $\ccz=\ell^\infty(\zz^d)$ is straightforward. Now if
  $\br{\mu_i}{1+|\cdot|^{8D}}<\infty$, $i=1,2$, and $\varphi\in\ell^{\infty,2D}(\zz^d)$,
  \begin{align}
    \left|\frac{J_{\mu_1,\mu_2}\varphi(z)}{1+|z|^{2D}}\right|
    &=\frac{1}{1+|z|^{2D}}\sum_{x\in\zz^d}\sum_{y\in\zz^d}\big(\varphi(y)-\varphi(x)\big)\Gamma(x,z,\{y\})\mu_1(\{x\})\\
    &\hspace{1in}+\frac{1}{1+|z|^{2D}}\sum_{x\in\zz^d}\sum_{y\in\zz^d}\big(\varphi(y)-\varphi(z)\big)\Gamma(z,x
    ,\{y\})\mu_2(\{x\})\\
    &\leq\frac{C\norm\varphi_{\infty,2D}}{1+|z|^{2D}}
    \Bigg[\sum_{x\in\zz^d}\sum_{y\in\zz^d}\left(1+|y|^{2D}\right)\,\Gamma(x,z,\{y\})\big(\mu_1(\{x\})+\mu_2(\{x\})\big)\\
    &\hspace{2.2in}+\sum_{x\in\zz^d}\left(1+|x|^{2D}\right)\mu_1(\{x\})+1+|z|^{2D}\Bigg]\\
    &\leq\frac{C\norm\varphi_{\infty,2d}}{1+|z|^{2D}}
    \Bigg[1+|z|^{2D}+\sum_{x\in\zz^d}\left(1+|x|^{2D}\right)\mu_2(\{x\})\Bigg]
    \leq C\norm\varphi_{\infty,2D}
  \end{align}
  uniformly in $z$, where we used \eqref{eq:mom:zzd:1} with a power of $2D$ instead of
  $8D$. We deduce that $\norm{J_{\mu_1,\mu_2}\varphi}_{\infty,2D}\leq
  C\norm\varphi_{\infty,2D}$ as required. The proof for $\ell^{\infty,3D}(\zz^d)$ is
  similar.  For \eqref{assum:2clt:3ii}, consider $\varphi\in\ell^\infty(\zz^d)$ and
  $\mu_1,\mu_2,\mu_3,\mu_4\in\cp$. Then
  \begin{align}
    \left|\left(J_{\mu_1,\mu_2}-J_{\mu_3,\mu_4}\right)\!\varphi(z)\right|
    &=\left|\int_W\!\Gamma\varphi(w;z)\big[\mu_1(dw)-\mu_3(dw)\big]
      +\int_W\!\Gamma\varphi(z;w)\big[\mu_2(dw)-\mu_4(dw)\big]\right|\\
    &\leq\norm{\Gamma\varphi(\cdot;z)}_\infty\norm{\mu_1-\mu_3}_{\ell^\infty(\zz^d)^\prime}
    +\norm{\Gamma\varphi(z;\cdot)}_\infty\norm{\mu_2-\mu_4}_{\ell^\infty(\zz^d)^\prime}.
  \end{align}
  Now
  $\norm{\Gamma\varphi(\cdot;z)}_\infty$ and $\norm{\Gamma\varphi(z;\cdot)}_\infty$ are
  both bounded by $4\olambda\norm\varphi_\infty$, so we get
  \begin{align}
    \norm{\left(J_{\mu_1,\mu_2}-J_{\mu_3,\mu_4}\right)\!\varphi}_\infty
    &\leq4\olambda\norm\varphi_\infty
    \left[\norm{\mu_1-\mu_3}_{\ell^{\infty,2d}(\zz^d)^\prime}
      +\norm{\mu_2-\mu_4}_{\ell^{\infty,2d}(\zz^d)^\prime}\right]
  \end{align}
  as required.
\end{proof}

\paragraph{Acknowledgements.} I am grateful to the three members of my PhD thesis
committee for their help during the course of this research. In particular, I thank Philip
Protter for suggesting to me the problem which originated this paper, Laurent Saloff-Coste
for helpful discussions about the functional analytical setting used in Theorem
\ref{thm:clt}, and my advisor, Rick Durrett, for his invaluable help throughout my
research and for his suggestions and comments on several versions of the manuscript. I
also want to thank an anonymous referee for a very careful reading of the manuscript and
for helpful comments and suggestions.

\bibliographystyle{natbib} \bibliography{biblio}

\end{document}